\documentclass[10pt, reqno]{amsart}
\usepackage{a4wide}
\usepackage[english]{babel}
\usepackage{graphicx}
\usepackage{amsmath}
\usepackage{amssymb}
\usepackage{bbm}
\usepackage{latexsym}
\usepackage{eurosym}     
\usepackage{caption}
\usepackage{subcaption}
\usepackage{amsaddr}

\usepackage{fancyhdr}
\pagestyle{fancy}

\usepackage{parskip}
\usepackage{lscape}
\usepackage[ruled,vlined]{algorithm2e}
\usepackage{enumitem}
\usepackage[final]{pdfpages}
\usepackage{array}
\usepackage[utf8]{inputenc}
\usepackage{xspace}                 %for normal spacing behind commands
\usepackage{amsthm}
\usepackage{hyperref}
\newtheorem{theorem}{Theorem}[section]
\newtheorem{corollary}[theorem]{Corollary}
\newtheorem{lemma}[theorem]{Lemma}
\newtheorem{prop}[theorem]{Proposition}
\newtheorem{defn}[theorem]{Definition}
\newtheorem{remark}[theorem]{Remark}
\newtheorem{claim}[theorem]{Claim}
\newtheorem{cond}[theorem]{Condition}

\newcommand{\Ind}[1]{\mathbbm{1}\{#1\}}

\newcommand{\cP}{\mathcal{P}}

\newcommand{\cL}{\mathcal{L}}
\newcommand{\cT}{\mathcal{T}}

\newcommand{\wT}{\widetilde{T}}

\DeclareMathOperator*{\argmin}{arg\,min}

\newcommand{\uc}{\bar{c}}
\newcommand{\lc}{\underline{c}}

\newcommand{\Tmd}{\mathrm{Tmd}}
\newcommand{\tr}{\mathrm{tr}}
\newcommand{\typ}{\mathrm{typ}}
\newcommand{\hgt}{\mathrm{hgt}}

\begin{document}
\fancyhead[LO]{Threshold metric dimension of trees}
\fancyhead[LE]{Bartha, Komj{\'a}thy and Raes}
\fancyhead[RO]{}
\fancyhead[RE]{}
\title{Sharp bound on the threshold metric dimension of trees}
\date{\today}
\subjclass[2020]{05C69 (Primary) 05C35, 05C05, 05C38 (Secondary)}
\keywords{metric dimension, threshold-$k$ metric dimension, $k$-truncated metric dimension, source detection}
\author{Zsolt Bartha$^\star$, J\'ulia Komj\'athy$^\dagger$ and J\"arvi Raes$^\ddag$}
\address{$^\star$ Department of Mathematics and Computer Science, Eindhoven University of Technology\\
$^\dagger$ Delft Institute of Applied Mathematics, Delft University of Technology\\
$^\ddag$ Department of Mathematics and Computer Science, Eindhoven University of Technology}
\email{z.bartha@tue.nl, J.Komjathy@tudelft.nl, j.raes@student.tue.nl}

\maketitle
\begin{abstract}
The threshold-$k$ metric dimension ($\Tmd_k$) of a graph is the minimum number of sensors --  a subset of the vertex set -- needed to uniquely identify any vertex in the graph, solely based on its distances from the sensors, when the measuring radius of a sensor is $k$. We give a sharp lower bound on the $\Tmd_k$ of trees, depending only on the number of vertices $n$ and the measuring radius $k$. This sharp lower bound grows linearly in $n$ with leading coefficient  $3/(k^2+4k+3+\mathbbm{1}\{k\equiv 1\pmod 3\})$, disproving earlier conjectures by Tillquist \emph{et al.} in \cite{tfl2021tmd} that suspected $n/(\lfloor k^2/4\rfloor +2k)$ as main order term. 
We provide a construction for the largest possible trees with a given $\Tmd_k$ value.  The proof that our optimal construction cannot be improved  relies on edge-rewiring procedures of arbitrary (suboptimal) trees with arbitrary  resolving sets, which reveal the \emph{structure} of how small subsets of sensors measure and resolve certain areas in the tree that we call the attraction of those sensors. The notion of `attraction of sensors' might be useful in other contexts beyond trees to solve related problems.
We also provide an improved lower bound on the $\Tmd_k$ of arbitrary trees that takes into account the structural properties of the tree, in particular, the number and length of simple paths of degree-two vertices terminating in leaf vertices. This bound complements \cite{tfl2021tmd}, where only trees \emph{without} degree-two vertices were considered, except the simple case of a single path. 
\end{abstract}

\section{Introduction}

The \emph{metric dimension} of graphs is a combinatorial notion first introduced by Slater \cite{slater1975leaves} in 1975, and independently by Harary and Melter \cite{harary1976metric} one year later. It is the optimal value of a source detection problem described as follows. Let $G=(V,E)$ be a simple, undirected graph, and let $d$ denote the graph distance on its vertex set $V$, with the convention that $d(x,x)=0$. We call a subset of vertices $S\subseteq V$ \emph{resolving}, if the vector of graph distances $(d(s,v))_{s\in S}$ is distinct for each vertex $v\in V$. In other words, we imagine that the vertices in $S$ are \emph{sensors} that can measure their distances from each vertex in the graph, and call $S$ a resolving set if each vertex in $V$ can be (uniquely) identified by the measurements of the sensors. Then the metric dimension of $G$ is the smallest cardinality of such a resolving set. This model of source detection is motivated by the problem of finding the unknown source of an infection in a network, based on the measured infection time of a certain subset of individuals, as in \cite{pinto2012locating,spinelli2016observer}.

In this paper we consider a modified version of the above problem, where we replace the graph distance $d$ above by its truncated form $d_k(\cdot,\cdot)=\min\{d(\cdot,\cdot),k+1\}$, with $k$ being an integer parameter. This corresponds to limiting the radius of measurement of each sensor vertex to $k$, i.e., not allowing the sensor to distinguish between vertices that are further away than $k$. A set $S\subseteq V$ is called a \emph{threshold-$k$ resolving set} if $S$ is a resolving set under the metric induced by $d_k(\cdot, \cdot)$, and additionally for every $v\in V$ there is some $s\in S$ with $d_k(s,v)\le k$. The smallest cardinality of such a set is the \emph{threshold-$k$ metric dimension} of $G$, denoted by $\Tmd_k(G)$. This modification of the model is inspired by the scenario where the sensors can only make noisy measurements: the noise accumulates over distance, and above a certain threshold the measurements become unreliable, as in \cite{pinto2012locating, spinelli2016observer, lecomte2021adaptivity}. Works where the threshold-$k$ metric dimension has been introduced include \cite{estrada2021k, tillquist2020low,tfl2021tmd} and \cite{geneson2021distance}, and for $k=1$ the threshold metric dimension is equivalent to the locating-dominating code problem, introduced by Slater earlier in \cite{slater1988dominating}. We discuss further related literature, including variations on the metric dimension problem below in Section \ref{sec:related-work}. 

In this paper we study the threshold metric dimension of \emph{trees}  (cycle-free connected graphs), see Section \ref{sec:model} below for the usual definition. Our main result is a worst-case lower bound on $\Tmd_k(G)$ based on the number of vertices in the tree.

\begin{theorem}[Lower bound on the threshold-$k$ metric dimension]\label{thm:main}
Let $T$ be any tree on $n\ge 1$ vertices. Then for all $k\ge 1$,
\[\Tmd_k(T)\ge\left\lceil\frac{3n+k^2+k+\mathbbm{1}_{k\equiv 1\!\!\pmod 3}}{k^2+4k+3+\mathbbm{1}_{k\equiv 1\!\!\pmod 3}}\right\rceil.\]
\end{theorem}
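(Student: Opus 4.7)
The plan is a direct charging argument: fix any threshold-$k$ resolving set $S \subseteq V(T)$ with $|S| = s$, and prove the rearranged inequality
\[
3n \;\le\; s\bigl(k^2+4k+3+\mathbbm{1}_{k\equiv 1\!\!\pmod 3}\bigr) \;-\; k^2 - k - \mathbbm{1}_{k\equiv 1\!\!\pmod 3}.
\]
Each vertex will receive a total charge of $3$, distributed among the sensors that jointly certify its identity; the bulk of the work is then to show that the load accumulated by a single sensor is at most $k^2+4k+3+\mathbbm{1}_{k\equiv 1\!\!\pmod 3}$, modulo a global correction that absorbs the $-(k^2+k+\mathbbm{1})$ term as a one-off boundary contribution.

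Following the abstract, for every small subset $U \subseteq S$ I would define the \emph{attraction} $A(U) \subseteq V(T)$ as the set of vertices whose truncated-distance vectors from $S$ are pinned down exactly by the sensors in $U$ (the minimal certificate of $v$'s identity). Since $S$ is threshold-$k$ resolving, the family $\{A(U)\}_U$ partitions $V(T)$. In a tree, the sensors of $U$ span a unique subtree $\cT_U$, and $A(U)$ lies inside $\cT_U$ together with its incident branches of length at most $k$. Distinct truncated-distance profiles on these branches can be enumerated directly, yielding a sharp upper bound on $|A(U)|$ that is polynomial in $k$ of degree at most $|U|$. After weighting each vertex by $3$ and summing over attractions, the extremum emerges at $|U|=3$: three cooperating sensors lying on a short common path can jointly resolve up to $(k+1)(k+3)$ branch-vertices, giving the per-sensor contribution $(k+1)(k+3)/3$ that matches the leading coefficient in the statement.

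For a generic pair $(T,S)$ the attraction bound is not immediately saturated; to close the gap I would employ edge-rewiring operations — local surgeries that preserve $|V(T)|$ and keep $S$ resolving while monotonically concentrating mass in canonical configurations whose attraction structure matches the arithmetic bound exactly. The main obstacles are twofold. First, one must design a rewiring scheme that provably terminates at an extremal tree; this is the structural heart of the argument, where the notion of attraction does the heavy lifting by certifying that no rewiring step destroys resolvability. Second, getting the sharp factor $3$ in the denominator requires controlling $|U|=1,2,3$ attractions simultaneously, since the optimum is attained only when the correct proportion of sensors participates in triple-attractions rather than single or double ones. The mod-$3$ indicator then has a clean arithmetic explanation: $(k+1)(k+3) \equiv 0 \pmod 3$ precisely when $k \not\equiv 1 \pmod 3$, and in the residual case each extremal block forfeits a single vertex to integrality, which propagates symmetrically into both the numerator and the denominator of the bound.
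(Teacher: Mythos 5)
Your high-level plan (partition $V(T)$ into attractions $A(U)$ of sensor subsets, bound their sizes, and use edge-rewirings to reduce a general tree to a canonical extremal one) is indeed the paper's strategy, but the proposal has two genuine gaps, one structural and one of omission. The structural one: you locate the extremum at $|U|=3$, claiming three sensors on a short common path jointly resolve up to $(k+1)(k+3)$ vertices. In a tree this configuration cannot even occur: if $s_2$ lies on the path between $s_1$ and $s_3$, then every path from a vertex to $s_3$ that crosses the $s_1$--$s_2$ segment passes through $s_2$, so no vertex is \emph{directly} measured by all three, and $A(s_1,s_2,s_3)=\emptyset$. The actual extremal decomposition uses only singletons and pairs: each sensor carries a leaf-path attraction of $k$ vertices, and each of the exactly $m-1$ sensor paths (the contracted sensor graph is a tree, which is the global fact your per-sensor load bound is missing --- a naive load bound fails for a sensor with many sensor-neighbours) carries a pair attraction of at most $(k^2+k+\mathbbm{1}_{k\equiv 1 \pmod 3})/3$ vertices, obtained by optimising the inter-sensor distance to the nearest integer to $(2k+1)/3$ rather than fixing it at $k$. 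This gives $(k+1)m+(m-1)(k^2+k+\mathbbm{1})/3$, which rearranges to the stated bound; your arithmetic for the indicator is also reversed --- when $k\equiv 1\pmod 3$ the optimal (odd) spacing \emph{gains} one vertex per pair, it does not forfeit one.

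The gap of omission is larger: you explicitly defer the design of a terminating rewiring scheme that keeps $S$ resolving, and that is where essentially all of the work lies. The paper needs three distinct transformations --- (A) relocating each single-sensor attraction into a leaf-path at that sensor, (B) iteratively shortening sensor paths longer than $k+1$ with a potential-function termination argument, and (C) showing that two strong sensor paths sharing an edge allow the \emph{insertion} of a new vertex while $S$ still resolves, contradicting maximality --- each requiring a careful case analysis that rewired vertices remain distinguished. Note also that the correct logic is not ``rewire toward a canonical tree of known size and compare''; it is ``any tree violating the canonical structure can be strictly enlarged,'' which is why (C) must add a vertex. Without supplying these transformations and their correctness proofs, the proposal establishes only the counting shell of the argument, and with the wrong extremal configuration even that shell does not yield the claimed constant.
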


These bounds are sharp, in the sense that for any positive integers $n$ and $k$ there exists a tree $T$ on $n$ vertices, which satisfies the respective bound. We prove Theorem \ref{thm:main} by identifying the size of the largest tree with a given $\Tmd_k$ (see Proposition \ref{prop:main} below). That result disproves the conjecture of a recent paper by Tillquist, Frongillo and Lladser \cite{tfl2021tmd}, which speculated that the size of the largest tree $T$ with $\Tmd_k(T)=m$ is $\Theta(mk^2/4)$, based on their construction. Our result shows that it is, in fact, $\Theta(mk^2/3)$. We also provide a construction of trees of optimal size, and from our proof it follows that the optimal tree is non-unique. In fact, for each $m$, the number of largest-size (i.e., optimal) trees that can be resolved by $m$ sensors is at least as large as the number of non-isomorphic unlabelled trees on $m$ vertices. 

The largest part of our paper is devoted to the proof that no tree on $n$ vertices can be measured by less sensors than the lower bound in Theorem \ref{thm:main}. In principle, there could be two proof strategies to show such a lower bound. Either one `spares' a sensor on any suboptimal tree, i.e., one shows that the tree can be resolved by less sensors if it does not follow the optimal construction. This strategy however, does not work, since it is not hard to construct suboptimal trees and resolving sets where one cannot spare a sensor: such an example is a star-graph (a central vertex connected to $n-1$ leafs). The star-graph needs at least $n-2$ sensors for all $k$ and it is not hard to show that no sensor can be removed. 

The second possible strategy to show that a given labelled tree is suboptimal is to keep the sensors in place and add a new vertex to the tree, while still ensuring that every vertex is uniquely resolved. We follow this latter proof strategy.  We add a vertex via a series of `transformations', which can be applied to any tree \emph{not} following the optimal construction. These transformations all preserve $\Tmd_k$ and do not decrease the number of vertices.
In more detail:  given a labelled tree with a resolving set that does not follow the optimal construction, we slightly modify the edge set by rewiring a few edges and possibly adding a few labelled vertices, and show that the obtained (possibly) larger tree is still measured by the same sensor vertices, which violates the assumption that the tree was largest possible. 
Since the optimal tree is non-unique, these transformations either do not change the number of vertices and result in an optimal construction that we describe, or else, when we could add a vertex, they result in a tree $T'$ on a strictly larger vertex set with $\Tmd_k(T')=m$. 

Some notions that we introduce during the proofs, especially what we call \emph{attraction of sensors},  might be useful in other contexts as well, because they uncover the structure of the vertices measured by a subset of sensors and as a result they reveal where optimality may be violated.

Besides giving a worst-case lower bound on $\Tmd_k(T)$ in Theorem \ref{thm:main}, we also provide a sharper lower bound for certain suboptimal trees, which takes into account the \emph{structural properties} of the particular tree in question (see Theorem \ref{thm:LPLowerBound}). This bound is obtained by providing a locally optimal placement of sensors around certain degree-two paths terminating in leaves of the tree, and then applying Theorem \ref{thm:main} for the rest of the graph. In comparison to \cite{tfl2021tmd}, which identifies the threshold-$k$ metric dimension of a certain subclass of trees \emph{without} degree-two vertices, our lower bound builds on exactly these degree-two structures. While our lower bound might be suboptimal on most trees, there are trees (with leaf-paths) on which it provides sharp bounds, hence in some sense the bound cannot be improved, at least not in full generality.

\subsection{Related work and open questions}\label{sec:related-work}

\emph{Algorithmic aspects.} The question of finding the metric dimension of graphs has been extensively studied from an algorithmic point of view. The problem on general graphs is NP-hard \cite{khuller1996landmarks}, and can only be approximated up to a factor $\log n$  \cite{beerliova2006network,hauptmann2012approximation}. For parameter values $k\ge 2$,  threshold-$k$ metric dimension of general graphs is also an NP-hard problem \cite{estrada2021k,fernau2018adjacency}.
For trees, on the other hand, \cite{khuller1996landmarks} provides a simple linear-time algorithm for the computation of the metric dimension, writing it as the difference between the number of leaves and the number of vertices that have degree at least $3$ and are the endpoints of at least one simple path in the tree (which we will call a \emph{leaf-path}). This idea is closely related to our approach to the improved lower bound in Theorem \ref{thm:LPLowerBound}. As for the threshold-$k$ metric dimension for $k=1$, known as the location-domination number, the early work of Slater \cite{slater1987domination,slater1988dominating} shows that the location-domination number can be computed in linear time on trees, and gives a lower bound on its value, which was later improved by \cite{blidia2007locating}, and further improved by \cite{slater2014sharp}. We are unaware of such a linear-time algorithm for the threshold-$k$ metric dimension on trees for general  $k\ge2$.

\emph{Graph theoretical aspects.} Many aspects of the metric dimension have been studied from the graph theoretic point of view as well, including bounds in terms of the diameter of the graph \cite{chartrand2000resolvability, hernando2010diameter}, bounds for certain Cayley graphs \cite{fehr2006cayley}, Cartesian products of graphs \cite{caeres2007cartesian}, and the metric dimension of infinite graphs \cite{caceres2012infinite}, and wheel graphs \cite{buczkowski2003k, siddiqui2014computing}.

\emph{Asymptotic metric dimension of random graph classes.} There has been much work studying the metric dimension of \emph{random graphs}. Its asymptotic value for dense Erd\H{o}s--R\'enyi graphs $G(n,p)$ is obtained in \cite{bollobas2013ER}, while for sparse, subcritical $G(n,p)$ and uniform random trees and forests, its asymptotic distribution is shown in \cite{dieter2015clt}. The asymptotic minimal size of an \emph{identifying set} for the dense $G(n,p)$ model is also known \cite{frieze2007ER}. This latter is similar to a locating-dominating set (threshold-1 resolving set) with the difference that sensors cannot distinguish their own vertex from their immediate neighbors. Both this problem and the location-domination number are studied in \cite{muller2009geometric} for random geometric graphs, where an identifying set only exists with an asymptotically nontrivial probability in certain ranges of the parameters, unlike in the Erd\H{o}s--R\'enyi setting. For various models of growing tree models (e.g.\ the random binary search tree, preferential attachment trees, etc), and conditioned Galton-Watson trees (including uniform trees on $n$ vertices) Law of Larger Numbers-type results for the metric dimension were obtained by \cite{komjathy2020metric}.

\emph{Modifications of the metric dimension.} We also mention a few results on some modified versions of the metric dimension problem. In \cite{thiran2018doublemd} bounds are given for the \emph{double metric dimension} of the Erd{\H o}s-R\'enyi random graph model. This version of the problem assumes that the infection time of the source (that is, the starting time of the process) is unknown, and requires a set of sensors $S$ that is \emph{double resolving}: every vertex $v$ is uniquely identified by the vector of distance differences $\{d(s_1,v)-d(s_2,v):s_1,s_2\in S\}$. 
Another variant, the \emph{sequential metric dimension} (SMD) of $G(n,p)$ graphs is studied in \cite{odor2019seq}: in this model one is allowed to determine the placement of sensors in an adaptive fashion, using the measurements of the previously deployed ones, to determine the location of the source. The SMD of a graph is then the minimal number of sensors needed in a worst-case position of the source. A more general version of this problem is the $k$-metric dimension, where every pair of vertices needs to be resolved by at least $k$ sensors (and where this notation $k$ is used differently from our paper), has been studied in \cite{estrada2016adjacency, estrada2015k, estrada2014k, estrada2016k, estrada2016kl}.
A further variant of this circle of problems is the \emph{fractional metric dimension}, introduced in \cite{currie2001metric}, which is the linear programming relaxation of the integer programming problem encoding the identification of the metric dimension of a graph. Further work on this concept include \cite{arumugam2012fractional, arumugam2013fractional, feng2013metric, feng2014fractional}.

Slightly less related to our work are the concepts of $r$-\emph{identifying codes} and $r$-\emph{locating-dominating codes}, where a sensor can measure up to distance $r$ but cannot distinguish between vertices within this radius (in the case of the identifying code), except itself (in the case of $r$-locating-dominating codes). The authors of \cite{chen2011identifying} provide the sizes of the optimal $r$-identifying codes of paths and cycles, which asymptotically coincide with the optimal size of a $1$-identifying code, i.e., it has density $1/2$. Similar results for the $r$-locating-dominating code for cycles in \cite{exoo2011locating} show that the optimal code in this case also has the same asymptotic density, $1/3$, as in the $r=1$ case. For other work on this topic, see the references within \cite{chen2011identifying}.

\emph{Open questions.}
Next, we mention questions that our paper leaves open.  The question of (algorithmically) quickly finding an optimal arrangement of sensors stays open on trees for general $k\ge 2$. Our proof of Theorem \ref{thm:LPLowerBound} gives partial answer by finding the optimal placement on certain leaf-paths, parts of the tree that are simple paths leading to a leaf vertex. For the rest of the tree, however, only the size-dependent lower bound in Theorem \ref{thm:main} is used.

Another possible direction is to study the threshold metric dimension of random graph models, starting with e.g. random trees, as in  \cite{komjathy2020metric}. While Law of Large Numbers seems to be reasonable to hold for the tree-classes studied in \cite{komjathy2020metric}, at the moment we do not see an easy way to prove this.

The rest of the paper is structured as follows. In Section \ref{sec:model} we state our results after providing the necessary notation and definitions, and give a short sketch of our proofs. In Section \ref{sec:prelim} we introduce further definitions and concepts that will be used throughout the paper. Sections \ref{sec:A}, \ref{sec:B} and \ref{sec:C} contain the three transformations that form the three main steps of the proof of Theorem \ref{thm:main}. In Section \ref{sec:OptTree} we complete the proof of Theorem \ref{thm:main}, while also giving a construction for trees of optimal size. Finally, in Section \ref{sec:general} we prove the structural lower bound, Theorem \ref{thm:LPLowerBound}.

\section{Model and results} \label{sec:model}
We will start by fixing the notation that we will use throughout the paper. For a set $V$ let $V^{(2)}$ denote the 2-element subsets of $V$. A (simple, undirected) {\it graph} $G$ is an ordered pair $(V,E)$ where $V$ is a set of {\it vertices} and $E\subseteq V^{(2)}$ is a set of {\it edges}. We say that $e\in E$ {\it connects} its two vertices in the graph. We will sometimes write $V(G)$ and $E(G)$ for the vertex and edge sets, respectively, of a given graph $G$. Somewhat abusing the notation we will sometimes write $v\in G$ instead of $v\in V(G)$ for a vertex of $G$, as is standard. The {\it size} of a graph $G$, denoted by $|G|$, is the cardinality of its vertex set. A {\it subgraph} of $G$ is a pair $(V',E')$ such that $V'\subseteq V$ and $E'\subseteq E\cap V'^{(2)}$. A subgraph {\it induced} by $V_1\subseteq V$ in $G$ is the pair $(V_1,E_1)$ where $E_1=E\cap V_1^{(2)}$. A {\it path} is a graph $P$ such that $V(P)=\{v_i\}_{i=1}^{k+1}$, for some $k\ge 1$, and $E(P)=\{\{v_i,v_{i+1}\}\}_{i=1}^k$. We call $v_1, v_{k+1} $ the end vertices of the path. A {\it cycle} is a graph $C$ such that $V(C)=\{v_i\}_{i=1}^{k}$, $k\ge 3$, and $E(C)=\cup_{i=1}^{k-1}\{\{v_i,v_{i+1}\}\}\cup\{\{v_k,v_1\}\}$. The {\it length} of a path $P$ or a cycle $C$ is the number of edges in it. A graph $G$ is {\it connected} if for any pair $u,v\in V(G)$ there is a path in $G$ (as a subgraph) that contains both $u$ and $v$. We call a graph a {\it forest} if it has no cycles in it, and we call a connected forest a {\it tree}.  The degree of a vertex $v$, denoted by $\deg(v)$, is the number of edges containing $v$. A vertex of degree one is called a {\it leaf}. We call a path $P$ a {\it leaf-path} in $G$ if $P$ is the induced subgraph of $G$ on the vertices $V(P)$, and, with respect to $G$, one end vertex of $P$ is a leaf, the other has degree at least 3, and all its other vertices have degree 2. The {\it distance} between two vertices $u,v$ in $G$, denoted by $d_G(u,v)$, is the length of the shortest path between $u$ and $v$ in $G$, with the convention that $d_G(u,u)=0$. Here we omit the subscript when the underlying graph is clear from the context. Finally, we denote by $k\wedge l=\min\{k,l\}$, and $d_k(x,y):=d(x,y)\wedge (k+1)$.

Next we define the main topic of this paper.

\begin{defn}\label{def:tmd}
Let $G = (V,E)$ be an arbitrary simple, undirected graph, and fix an integer threshold $k\ge 1$.  We say that a vertex $s$ \emph{resolves} (or equivalently, \emph{distinguishes}) a pair of vertices $x,y\in V$ if $d_k(s,x) \neq d_k(s,y)$.
A \emph{threshold-$k$ resolving set} is a subset $S$ of $V$ such that for every pair of vertices $x,y\in V$ there is some vertex $s\in S$ such that $s$ resolves $x,y$, and for every $x\in V$ there is some $s\in S$ such that $d_k(s,x)\le k$. The threshold-$k$ metric dimension of $G$, denoted by $\Tmd_k(G)$, is the smallest integer $m$ such that there exists a threshold-$k$ resolving set $S$ for $G$ with $|S| = m$.
\end{defn}
We call the elements of a threshold-$k$ resolving set \emph{sensors}. We say that a vertex $x$ is \emph{measured by} a sensor $s$ if $d_k(s, x) \le k$.

Our main result, Theorem \ref{thm:main}, readily implies the result of Slater \cite{slater1987domination} about the threshold-$1$ metric dimension, which is identical to the locating-dominating number of the tree.

\begin{corollary}[Lower bound on the locating-dominating number \cite{slater1987domination}]
Let $T$ be any tree on $n$ vertices. Then
\[\Tmd_1(T)\ge\left\lceil\frac{n+1}{3}\right\rceil.\]
\end{corollary}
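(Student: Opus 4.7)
The plan is that this corollary is a direct substitution $k=1$ into Theorem \ref{thm:main}, so the proof amounts to a short arithmetic verification with no new ideas required. I would first note that $1 \equiv 1 \pmod{3}$, so the indicator $\mathbbm{1}_{k\equiv 1\pmod 3}$ equals $1$ at $k=1$.

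Next, I would compute the numerator and denominator of the bound in Theorem \ref{thm:main} at $k=1$. The numerator becomes $3n + 1 + 1 + 1 = 3n+3$, and the denominator becomes $1 + 4 + 3 + 1 = 9$. Substituting gives
\[
\Tmd_1(T) \ge \left\lceil \frac{3n+3}{9} \right\rceil = \left\lceil \frac{n+1}{3} \right\rceil,
\]
where in the last step I use the identity $\lceil 3a/9 \rceil = \lceil a/3 \rceil$ for any integer $a$ (since $3$ divides the factor pulled out).

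There is no real obstacle here: the only thing to double-check is that pulling the factor of $3$ out of the ceiling is legitimate, which follows from the general fact that $\lceil mx/(my) \rceil = \lceil x/y \rceil$ for any positive integer $m$, or equivalently from the observation that $\lceil (3n+3)/9\rceil = \lceil (n+1)/3 \rceil$ because numerator and denominator share the factor $3$ exactly. This completes the proof of the corollary.
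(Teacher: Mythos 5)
Your proof is correct and is exactly what the paper intends: the corollary is stated as an immediate consequence of Theorem \ref{thm:main}, and substituting $k=1$ (so the indicator equals $1$) gives $\lceil(3n+3)/9\rceil=\lceil(n+1)/3\rceil$ since $(3n+3)/9$ and $(n+1)/3$ are the same rational number. Nothing further is needed.
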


To prove Theorem \ref{thm:main} we will identify the largest trees with a given threshold-$k$ metric dimension. To state that result we introduce some notation first.

\begin{defn}Fix $k\ge 1$. We denote the set of trees with threshold-$k$ metric dimension $m$ by $\cT_m=\cT_m(k)$, and $\cT^\star_m=\cT_m^\star(k)$ denotes the set of trees with the largest possible size within $\cT_m$:
\begin{align*}
    \cT_m&:=\{T\text{ tree}:\Tmd_k(T)=m\},\\
    \cT_m^\star&:=\{T^
\star\in\cT_m: |T^\star|=\max_{T\in\cT_m}(|T|)\}.
\end{align*}
\end{defn}
We then identify the maximal size of a tree that can be measured by $m$ sensors.
\begin{prop}\label{prop:main}
For all $k\ge 1$, and any $T^\star\in \mathcal T_m^\star$,
\begin{equation}
\begin{aligned}
     |T^\star|&=m\cdot\frac{k^2+4k+3+\mathbbm{1}_{k \equiv 1\ (\mathrm{mod}\  3) }}{3} - \frac{k^2+k+\mathbbm{1}_{k \equiv 1\ (\mathrm{mod}\  3) }}{3}  \\ &=(k+1)m+(m-1)(k^2+k+\mathbbm{1}_{k \equiv 1\ (\mathrm{mod}\  3) })/3.
\end{aligned}
   \end{equation}
\end{prop}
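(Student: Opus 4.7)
The plan is to establish matching lower and upper bounds on $|T^\star|$. The lower bound is obtained by an explicit construction producing, for each $m\geq 1$, a tree of the claimed size with threshold-$k$ metric dimension at most $m$; the upper bound is obtained by a sequence of edge-rewiring transformations that takes an arbitrary element of $\cT_m$ and brings it into the canonical form of the construction while preserving $\Tmd_k$ and (weakly) increasing the vertex count. For the construction I would exploit the natural decomposition $|T^\star|=(k+1)m+(m-1)\cdot\frac{k^2+k+\mathbbm{1}_{k\equiv 1\!\!\pmod 3}}{3}$: place $m$ designated sensors $s_1,\dots,s_m$ along a backbone, each carrying a pendant leaf-path of length $k$ (contributing the $(k+1)m$ summand), and between each consecutive pair $s_i,s_{i+1}$ insert a ``gadget'' made of a short path together with a prescribed set of pendant sub-paths of total size $\frac{k^2+k+\mathbbm{1}_{k\equiv 1\!\!\pmod 3}}{3}$. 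The gadget should be designed so that for each vertex $v$ inside it the truncated distance pair $(d_k(s_i,v),d_k(s_{i+1},v))$ is distinct from that of every other vertex of the tree; the congruence class of $k$ modulo $3$ enters because it controls how many distinct pairs remain available after accounting for the sensors themselves and the attached leaf-paths. A direct verification of distances would then show $\Tmd_k(T^\star)\leq m$, proving $|T^\star|$ is at least the claimed quantity.

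For the matching upper bound I would apply, in turn, the three transformations of Sections \ref{sec:A}, \ref{sec:B}, and \ref{sec:C} to a general $T\in\cT_m$ together with its threshold-$k$ resolving set $S$. Each transformation should locally rewire a few edges (possibly inserting new labelled vertices) while maintaining $S$ as a threshold-$k$ resolving set and not decreasing $|V(T)|$. I expect the cumulative effect to drive $(T,S)$ into the canonical form of the construction, with the three steps having the roles of: (i) concentrating the vertices uniquely measured by a single sensor into a single leaf-path; (ii) optimizing the joint ``attraction region'' of each pair of consecutive sensors so that the maximum possible number of distinct truncated distance pairs is realized (this is the step that yields the $\frac{k^2+k+\mathbbm{1}_{k\equiv 1\!\!\pmod 3}}{3}$ gain per gap); and (iii) linking the resulting pieces into a single caterpillar backbone. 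If $T\in\cT_m^\star$ is not already in canonical form, then at least one of these steps would strictly increase $|V(T)|$, contradicting maximality; hence $|T^\star|$ equals the claimed quantity.

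The main obstacle, as I see it, is verifying that each rewiring step preserves the resolving property: altering edges can cause the truncated distance vectors of two vertices to collide, or push a previously measured vertex outside every $k$-neighborhood of a sensor. The right invariant is likely the attraction structure highlighted in the introduction — for each $\sigma\subseteq S$, the region $A(\sigma):=\{v:\{s\in S:d(s,v)\leq k\}=\sigma\}$ must continue to accommodate distinct truncated distance vectors after the rewiring — and the transformations should be shown to act locally on these regions in a size-non-decreasing manner. Embedded in this argument is the extremal combinatorial computation of the maximum number of realizable distance pairs inside a two-sensor gadget, which is what produces the subtle $\mathbbm{1}_{k\equiv 1\!\!\pmod 3}$ correction.
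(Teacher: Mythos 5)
Your proposal follows essentially the same route as the paper: an explicit construction realizing $(k+1)m+(m-1)(k^2+k+\mathbbm{1}_{k\equiv 1\!\!\pmod 3})/3$ vertices, combined with the edge-rewiring transformations of Sections \ref{sec:A}--\ref{sec:C} to show no tree in $\cT_m$ can be larger, with the mod-$3$ correction emerging from the extremal count of realizable distance pairs in the attraction of a neighboring sensor pair (the paper's Lemmas \ref{lem:NumberOfSensorpaths} and \ref{lem:NumberOfVertices}). The only slight misattribution is the division of labor among the steps — in the paper Transformation B shortens overly long sensor paths and only Transformation C strictly adds a vertex (when two strong sensor paths overlap), and the relevant invariant is the finer ``direct measuring'' notion of attraction rather than the set of sensors within distance $k$ — but these do not affect the soundness of the strategy.
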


\begin{remark}
Recall from Definition \ref{def:tmd} that we require for a resolving set $S$ that every vertex $v\in V$ be measured by at least one sensor in $S$. We use this convention to make the presentation of our results slightly more convenient.
Omitting this requirement would simply add one extra vertex to the optimally-sized trees in $\cT^\star_m$ that is not measured by any sensor, and would change the bounds in Theorem \ref{thm:main} accordingly. 
\end{remark}

Next, we give an improved structural lower bound on the threshold metric dimension of suboptimal trees as well. The idea is that having multiple leaf-paths emanating from a common vertex  $v$ is very costly in terms of how many sensors are needed to identify them. Since sensors that are not part of such leaf-paths can only measure such paths via $v$, they cannot distinguish vertices at equal distance from $v$ located on two different leaf-paths. Thus, we can compute how many sensors such a system of leaf-paths minimally requires. For the 'rest' of the tree, we then essentially use the optimal bound that we developed in Theorem \ref{thm:main}. Our lower bound is valid for any tree, but gives fairly sharp lower bounds only for trees that have relatively large number of leaf-paths.  To be able to state the result, we start with some definitions.

\begin{defn}[Leaf paths and support vertices]\label{def:support}We will write $\cL_v=\{\cP^{(v)}_j\}$ for the collection of leaf-paths starting at a vertex $v$ with $\deg v\ge 3$, and denote their number by $L_v=|\cL_v|$. The length (number of edges) of a leaf-path $\cP^{(v)}_j$ will be denoted by $\ell(\cP^{(v)}_j)=\ell(v,j)$. 
Define $F_T=\{v\in T: \mathrm{deg}(v)\ge 3, L_v\ge 2\}$
to be the set of {\textit support vertices} of $T$.
\end{defn}

\begin{defn}\label{def:complexity}
Fix $k\ge 1$. For an integer $\ell\ge 1$, let $q$ and $r$ be the non-negative integers such that $\ell=q(3k+2)+r$, and $r\le 3k+1$.  Define the {\textit upper} and {\textit lower complexity}, respectively, of a path of length $\ell$ to be
\begin{align*}
\uc(\ell)&:=2q+\Ind{r\ge 1}+\Ind{r\ge 2k+2},\quad\text{and}\\
\lc(\ell)&:=2q+\Ind{r\ge k+1}+\Ind{r\ge 2k+2}.
\end{align*}
\end{defn}
The next lemma identifies how many sensors a system of leaf-paths minimally requires. We place the lemma here since it might be useful also for algorithmic aspects. In the proof, below in Section \ref{sec:general}, we also provide the location of the sensors mentioned in the lemma.
\begin{lemma}\label{lem:LeafPathSensors}
If $S$ is a threshold-$k$ resolving set on $T$, and $v\in F_T$, then all but at most one of the vertex sets $V(P^{(v)}_j)\setminus\{v\}$ for $P^{(v)}_j\in\cL_v$ contain at least $\uc(\ell(v,j))$ sensors in $S$, while $V(P^\star)\setminus\{v\}$ for the remaining path $P^\star\in\cL_v$ contains at least $\lc(\ell(P^\star))$ sensors in $S$.
\end{lemma}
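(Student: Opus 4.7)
The plan rests on the following structural observation. Fix a leaf-path $P^{(v)}_j = (v, u_1^{(j)}, \ldots, u_{\ell_j}^{(j)}) \in \cL_v$ of length $\ell_j$. Every sensor $s$ lying outside $V(P^{(v)}_j)$ sees each interior vertex $u_i^{(j)}$ only through $v$, so $d_k(s, u_i^{(j)}) = \min(d(s,v)+i, k+1)$. As a function of $i$ this is monotone nondecreasing and saturates at $k+1$, so such external sensors distinguish a pair $u_i^{(j)}, u_{i'}^{(j)}$ only when $\min(i,i') \le k - a^\star_j$, where $a^\star_j := \min\{d(s,v) : s \in S \setminus V(P^{(v)}_j)\}$ is the shortest distance from $v$ to any external sensor. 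Moreover, two vertices at the same distance from $v$ on two different leaf-paths of $v$ receive identical $d_k$-values from every external sensor of both, hence such cross-path pairs must be distinguished by the internal sensors $S_j := S \cap (V(P^{(v)}_j) \setminus \{v\})$.

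In the first step I would establish the unconditional lower bound $|S_j| \ge \lc(\ell_j)$ for every leaf-path of $v$. I partition $P^{(v)}_j$ into $q_j$ consecutive blocks of length $3k+2$ starting from the leaf, plus a residual block of length $r_j$ adjacent to $v$. A pigeonhole/saturation argument shows every full block needs at least two internal sensors: three consecutive vertices near the center of a block would yield identical truncated distance vectors otherwise. The residual block contributes the two indicator terms of $\lc$: $r_j \ge k+1$ forces one internal sensor since even optimal external help at $a^\star_j = 1$ fails to reach positions $k$ and beyond, while $r_j \ge 2k+2$ forces a second by an analogous local saturation within the residual. In the second step I upgrade this to $|S_j| \ge \uc(\ell_j)$ whenever $a^\star_j \ge 2$: without a sensor at a neighbor of $v$, external sensors fail to distinguish the first $k-1$ vertices of $P^{(v)}_j$ either, so the residual block needs an additional internal sensor as soon as $r_j \ge 1$.

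The key byproduct of the second step, indispensable for the third, is the \emph{placement constraint}: any sensor configuration with $|S_j| < \uc(\ell_j)$ contains no internal sensor among $u_1^{(j)}, \ldots, u_{k+1}^{(j)}$. For the third step, suppose for contradiction that two leaf-paths $P^{(v)}_{j_1}, P^{(v)}_{j_2}$ both satisfy $|S_{j_i}| < \uc(\ell_{j_i})$. By the placement constraint neither path has an internal sensor in its first $k+1$ positions. A short calculation then shows that the two vertices $u_1^{(j_1)}, u_1^{(j_2)}$, both at distance $1$ from $v$, receive identical $d_k$-values from every sensor in $S$: external sensors treat the two vertices symmetrically because both sit at the same distance from $v$, while internal sensors on $P^{(v)}_{j_i}$ at positions $\ge k+2$ give $d_k = k+1$ to both. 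This contradicts $S$ being resolving. Hence at most one leaf-path $P^\star$ violates the $\uc$ bound, and for that path the first step gives $|S_{P^\star}| \ge \lc(\ell_{P^\star})$, finishing the proof.

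The main obstacle I anticipate is the placement constraint in the second step: proving that any sensor squeezed into the first $k+1$ positions necessarily costs an extra sensor overall. This will require a careful residue-class analysis modulo $3k+2$ and case distinctions depending on whether the would-be extra sensor lands within the residual block, the first full block, or forces a shift of subsequent block-wise optima.
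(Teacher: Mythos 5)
Your plan is, at its core, the same as the paper's proof: decompose each leaf-path into blocks of length $3k+2$ anchored at the leaf, charge two sensors to each full block (the paper does this by noting the outermost vertex of the block needs a sensor $s$ within distance $k$, and $s$ cannot separate its two neighbours without a second sensor nearby; these two sensors cannot measure anything more than $3k+1$ positions inward, so the argument recurses), analyse the residual of length $r\le 3k+1$ sitting next to $v$, and finally observe that the vertices at distance $1$ from $v$ on two different leaf-paths receive identical truncated distances from every sensor except one sitting at position at most $k+1$ on one of the two paths — which is exactly why at most one path may drop from $\uc$ to $\lc$. Your Step 3 is precisely the paper's base-case argument for short leaf-paths, upgraded to arbitrary lengths via your ``placement constraint.''

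Two points need repair before this is a proof. First, the placement constraint is the crux and you leave it unproven. It is true, and you do not need a separate residue-class analysis: it falls out of the same leaf-to-root recursion as Step 1. If $1\le r\le k$ and $|S_j|=\lc(\ell_j)=2q$, the recursion forces exactly two sensors into each full block, and the two sensors serving the $q$-th (innermost) block must lie at positions at least $(r+3k+2)-(2k+1)=r+k+1\ge k+2$, so the residual positions $1,\dots,r$ and indeed all of $1,\dots,k+1$ are sensor-free. You should make explicit that the block argument pins down not just the \emph{number} but the \emph{location} of the forced sensors; the vague ``three consecutive vertices near the center of a block'' does not do this, whereas the paper's leaf-anchored version does. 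Second, Step 2's criterion is wrong as stated: ``$a^\star_j\ge 2$ implies $|S_j|\ge\uc(\ell_j)$'' is neither what Step 3 uses nor true. For example, a length-$1$ leaf-path $P^{(v)}_1$ can carry zero sensors while its sibling $P^{(v)}_2$ of length $k+1$ carries its single required sensor at position $k+1$ (which still separates the two distance-$1$ vertices, giving them truncated distances $k$ and $k+1$) and the nearest other sensor sits at distance $2$ from $v$; then $a^\star_1=2$ but $|S_1|=0<\uc(1)$. The correct dichotomy is not about the distance from $v$ to external sensors but is exactly the pairwise statement of your Step 3: for every pair of leaf-paths at $v$, at least one must contain a sensor at position at most $k+1$, and any path containing such a sensor satisfies the $\uc$ bound. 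With Step 2 replaced by the placement constraint (proved as above) and the false $a^\star_j$ claim dropped, the argument closes.
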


To determine which path shall be the special path $P^\star$ in Lemma \ref{lem:LeafPathSensors},  for a lower bound we subtract the difference between the upper and the lower complexity for each path $P_j^{(v)}$ (since this is the number of sensors `spared' by choosing $P_j^{(v)}$ to be the special path $P^\star$) and maximise it over paths in $\mathcal L_v$. 

Then the minimal number of sensors that need to be placed on $\cup_{j\le L_v} V(P_j^{(v)})\setminus\{v\}$ for some $v\in F_T$ is at least
\begin{equation}\label{eq:LeafPathSensors}
R(\cL_v)=\sum_{j=1}^{L_v}\uc(\ell(v,j))-\max_{1\le j\le L_v}\bigg\{\uc(\ell(v,j))-\lc(\ell(v,j))\bigg\}.
\end{equation}

As a combination of Theorem \ref{thm:main} and Lemma \ref{lem:LeafPathSensors}, and \eqref{eq:LeafPathSensors} we get the general lower bound on the threshold-$k$ metric dimension of trees:

\begin{theorem}\label{thm:LPLowerBound}
Let $T$ be a tree with $n$ vertices and fix $k\ge1$. Then
\begin{equation}\label{eq:tmdLeafPaths1}
\Tmd_k(T)\ge
\left\lceil\frac{3n-3\sum_{v\in F_T}\sum_{j=1}^{L_v}\ell(v,j)+k^2+k+\mathbbm{1}_{k \equiv 1\ (\mathrm{mod}\  3) }}{k^2+4k+3+\mathbbm{1}_{k \equiv 1\ (\mathrm{mod}\  3) }}\right\rceil+\sum_{v\in F_T}R(\cL_v)-|F_T|.
\end{equation}
\end{theorem}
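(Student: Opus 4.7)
The plan is to combine two ingredients already available in the excerpt: the per-support-vertex lower bound of Lemma~\ref{lem:LeafPathSensors} together with \eqref{eq:LeafPathSensors}, and a size-based application of Theorem~\ref{thm:main} to the subtree obtained by pruning all leaf-paths from all support vertices. First I would fix a threshold-$k$ resolving set $S$ of $T$ with $|S|=\Tmd_k(T)$, and define the \emph{body} $T_B$ as the subgraph of $T$ induced on $V(T)\setminus\bigcup_{v\in F_T}\bigcup_{j\le L_v}\bigl(V(\cP_j^{(v)})\setminus\{v\}\bigr)$. Since interior and leaf vertices of a leaf-path have degree at most $2$ while support vertices have degree at least $3$, the non-support vertex sets of leaf-paths emanating from distinct support vertices are disjoint; hence $T_B$ is a subtree of $T$ on exactly $n-\sum_{v\in F_T}\sum_{j=1}^{L_v}\ell(v,j)$ vertices. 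Splitting $S=S_B\sqcup S_{\mathrm{LP}}$ with $S_B:=S\cap V(T_B)$, and applying Lemma~\ref{lem:LeafPathSensors} at every $v\in F_T$ with the optimal choice of special path as in \eqref{eq:LeafPathSensors}, then gives $|S_{\mathrm{LP}}|\ge\sum_{v\in F_T}R(\cL_v)$.

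The central step is to promote the body sensors $S_B$ to a threshold-$k$ resolving set of $T_B$. My plan is to set $S^\star:=S_B\cup F_T$ and prove that $S^\star$ is such a resolving set of $T_B$. Granted this, Theorem~\ref{thm:main} applied to $T_B$, together with the trivial bound $|S^\star|\le|S_B|+|F_T|$, yields
\[
|S_B|\ \ge\ \left\lceil\frac{3|T_B|+k^2+k+\mathbbm{1}_{k\equiv 1\ (\mathrm{mod}\ 3)}}{k^2+4k+3+\mathbbm{1}_{k\equiv 1\ (\mathrm{mod}\ 3)}}\right\rceil-|F_T|,
\]
and summing this with the leaf-path lower bound $|S_{\mathrm{LP}}|\ge\sum_{v\in F_T}R(\cL_v)$, together with the substitution for $|T_B|$, reproduces \eqref{eq:tmdLeafPaths1} exactly.

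The hard part is verifying that $S^\star$ resolves $T_B$, and this rests on two observations. First, any $T$-geodesic between two body vertices stays inside $T_B$, since a pendant leaf-path cannot shorten any such path; consequently distances between body vertices agree in $T$ and in $T_B$, so every body pair distinguished in $T$ by a sensor in $S_B$ remains distinguished in $T_B$, and every body vertex measured in $T$ by a sensor in $S_B$ remains measured in $T_B$. Second, for any sensor $s\in V(\cP_j^{(v)})\setminus\{v\}$ and any $x\in V(T_B)$, the unique path from $s$ to $x$ in $T$ passes through $v$, so $d_T(s,x)=d_T(s,v)+d_T(v,x)$. A short case analysis on whether $d_{T,k}(s,y)\le k$ (for the partner body vertex $y$) then shows: if $s$ distinguishes $x,y\in V(T_B)$ in $T$, the support vertex $v$ distinguishes them in $T_B$; and if $s$ measures $x\in V(T_B)$, then $d_T(v,x)<k$, so $v\in F_T\subseteq S^\star$ measures $x$ in $T_B$. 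Combined with the first observation, this establishes that $S^\star$ both measures every vertex and distinguishes every pair of $T_B$, completing the only nontrivial step.
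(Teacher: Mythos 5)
Your proposal is correct and follows essentially the same route as the paper: both prune the leaf-paths hanging off support vertices, lower-bound the sensors on them via Lemma~\ref{lem:LeafPathSensors} and \eqref{eq:LeafPathSensors}, augment the remaining sensors by $F_T$ to obtain a threshold-$k$ resolving set of the trimmed tree (using that leaf-path sensors can only measure and resolve body vertices through their support vertex), and then invoke Theorem~\ref{thm:main} on the trimmed tree. The only difference is presentational — you argue directly while the paper argues by contradiction — and your case analysis for why the support vertex inherits the resolving power of its leaf-path sensors is a correct filling-in of the step the paper states more briefly.
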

Observe that the sum $\sum_{v\in F_T}\sum_{j=1}^{L_v}\ell(v,j)$ is the total number of vertices that are on leaf-paths emanating from support vertices.
We provide the proof in Section \ref{sec:general}.
\subsection{Discussion and methodology}
We can contrast Theorem \ref{thm:main} and Proposition \ref{prop:main} to the results of \cite{tfl2021tmd}. The authors of that paper conjecture that the leading coefficient of $m$ in Proposition \ref{prop:main} should be  $c_k=\lfloor k^2/4 \rfloor$, while we find that it is, in fact, $(k^2+4k+4)/3$ when $k\equiv 1\pmod 3$ and $(k^2+4k+3)/3$ otherwise, which is higher. The underestimation of the size of the optimal tree comes from assuming that the distance between two neighboring sensors in the tree is  exactly $k$ for all $k$, when in fact this is a parameter that can be optimised and is the nearest integer to $(2k+1)/3$.
\subsubsection*{Methodology, sketch of proof}
To  find the largest trees that have threshold-$k$ metric dimension $m$, we first find the 'skeleton' of such a tree, incorporating certain properties that certain optimal trees must satisfy. The edges of trees that do not follow these properties can be rewired in certain ways such that the threshold-$k$ metric dimension does not increase, while the number of vertices increases, or stays the same. Our proof will follow four steps that we explain next.

Step A: If the sensors on the tree $T$ are placed so that some vertices are measured only by a single sensor but are not forming a leaf-path emanating from this sensor (path of degree-2 vertices terminating in a leaf), the first transformation (Transformation A) moves these vertices into such a leaf-path. 
Once/whenever no such sensor can be found, we are able to execute two other transformations.

Step B: We call a pair of sensors neighboring if no other sensor can be found on the shortest path between them. If we can find a neighboring sensor-pair so that the shortest path between them contains more than $k+1$ edges, we can rewire the edges in a way that the distance between the two sensors becomes shorter (Transformation B), and they still remain a neighboring sensor-pair. 
A repetitive application of Transformation B results in a tree where the graph-distance between these two sensors is at most $k+1$, and where we can execute the transformation.

Step C: Finally, when we find three sensors such that two of the shortest paths between them are of length at most $k+1$, and they have a nontrivial overlap, then we apply a third transformation (Transformation C) that again rewires edges and adds one more vertex to the tree. 

After these transformations the original sensor vertices can still resolve the new, larger tree, proving that the tree was not the largest possible. We mention that this proof is minimal in the sense that we can only add a vertex when Step C is applicable. When it is not applicable, we are either in the setting of Step A or B, and these transformations are ment to make Step C applicable.

Step D: As a consequence, any optimal tree that we have after Step A must have that the shortest paths between neighboring sensors are all disjoint and contain at most $k+1$ edges, forming the `skeleton' of the tree. Finally, we can calculate the largest number of vertices this skeleton can support, by optimising the distance between two neighboring sensors and the number of vertices on leaf-paths emanating from vertices on these shortest paths between neighboring sensors. 

To obtain the structural lower bound in Theorem \ref{thm:LPLowerBound}, we show that each system of leaf-paths connecting to the same support vertex needs at least as many sensors as in \eqref{eq:LeafPathSensors}. Once these leaf-paths are resolved, the sensors on them can measure some vertices in the rest of the tree, and resolve vertices there, but they cannot measure further than their support vertex would, if it was a sensor. Combining this argument with the lower bound on the number of sensors on the rest of the tree from Theorem \ref{thm:main} allows us to find a lower bound: adding the number of sensors on leaf-paths to the number of sensors the skeleton would need if it was an optimal tree, and then finally subtracting the number of support vertices.

\section{Preliminaries: Attraction of sensors}\label{sec:prelim}

Before the proofs we first introduce some further notions, that not only will be crucial in our proofs, but we believe they could be useful in other contexts as well. As it will become clear later, the construction of the optimal trees is centered around the paths between sensors and the structure on `how' they measure vertices with respect to other sensors, that we call \emph{direct} measuring, and a related notion of  \emph{attraction} of sensors below.

\begin{defn}[Paths]\label{defn:paths}
For a tree $T=(V,E)$ and any pair of distinct vertices $x,y\in V$ we denote the unique path in $T$ between $x$ and $y$ by $\cP_T(x,y)$, its vertex set by $V(\cP_T(x,y))$, and its edge set by $E(\cP_T(x,y))$. We will omit the subscripts when the underlying graph is clear from the context.
\end{defn}

\begin{defn}[Weak and strong sensor paths]\label{defn:sensorpaths}
Given a tree $T=(V,E)$, a set of sensors $S\subseteq V$ and a distinct pair $s_1,s_2\in S$, we call $\cP_T(s_1,s_2)$ a \emph{sensor path} if it does not contain any other sensors beside $s_1$ and $s_2$. $\cP_T(s_1,s_2)$ is called a \emph{strong sensor path} if it is a sensor path and $|E(\cP_T(s_1,s_2))|\le k+1$. A sensor path that is not strong is called \emph{weak}.
\end{defn}

\begin{defn}[Measuring and direct measuring]\label{def:direct-measuring}
Given a tree $T=(V,E)$ and a set of sensors $S\subseteq V$, we say that a sensor $s$ \emph{measures} a vertex $x$ in $T$ if $d_T(s,x)\leq k$. In this case we further say that $s$ \emph{directly measures} $x$, if it also holds that $s'\notin V(\cP_T(s,x))$ for all $s'\in S\setminus\{s\}$.
\end{defn}

We will introduce a concept that we call \emph{minimally resolving} that will be crucial in our proofs. To ease the reader into the fairly non-obvious definition, we start with a simpler definition that is more intuitive: 
\begin{defn}[Resolved within a subset of sensors]\label{defn:resolved-within}
Let $T=(V,E)$ be a tree with a threshold-$k$ resolving set $S$. We say that a vertex $x\in V\setminus S$ is \emph{resolved within} the  sensors $S'=\{s_1, \dots, s_r\} \subseteq S$ in $T$ if \\
(i) $d(s_i, x) \leq k$ for at least one $i = 1,\ldots,r$, \\
(ii) there is \emph{no} sensor in $S\setminus S'$ that directly measures $x$, \\
(iii) for all $y\in V$ for which (ii) holds, there is some $s_i\in S'$ that resolves $x$ and $y$. 
\end{defn}
Note that (ii) is equivalent to the following: every sensor $s^\star$ in $S\setminus S'$ either does not measure $x$ or has $d(s^\star, x)=d(s^\star, s_i)+d(s_i, x)$ for some $i\le r$.
Heuristically, $x$ is resolved within $S'$ if all sensors not in $S'$ can only measure $x$ via a path crossing some sensor in $S'$, and $x$ is distinguished by $S'$ from all other vertices having the same property. Observe that in (iii), the condition that (ii) holds implies that (i) also holds for $y$. Indeed, if (ii) holds for $y$ but (i) does not, then either there is a sensor $s^\star\notin S'$ measuring $y$, such that $d(s', y)\le d(s^\star, y) \le k $ for some $s'\in S'$ by (ii), which is a contradiction, or $y$ cannot be measured by any sensor, and we assumed throughout the paper that we only consider trees where such a vertex is not present in $T$, a contradiction again.

\begin{defn}[`Resolved-within area' of a set of sensors]\label{defn:within}
Let $T=(V,E)$ be a tree with a threshold-$k$ resolving set $S\subseteq V$. The \emph{resolved-within area} of a set of sensors $S'\subseteq S$ is
\[M_T(S')=\{x\in V\setminus S: x \text{ is resolved within } S'\}.\]
\end{defn}
It is not hard to see that the `resolved-within' area is monotone under containment, i.e., when $B\subset S'$, then $M_T(B)\subseteq M_T(S')$, and if $S$ is a threshold-$k$ resolving set for $T$, then $M_T(S)=V\setminus S$.

 The next definition decomposes $M_T(S')$ into \emph{disjoint subsets}: heuristically speaking, starting from the set of single sensors, and increasing the set-size gradually, for a vertex $x$ it finds the minimal set of sensors needed that can distinguish $x$ from all other vertices via direct measuring. 
\begin{defn}[Minimally resolving]\label{defn:unique-measuring}
Let $T=(V,E)$ be a tree with a threshold-$k$ resolving set $S$. A subset of sensors $S' \subseteq S$, $S' =  \{s_1, \ldots, s_r\}$ is said to \emph{minimally resolve} a vertex $x$ in $T$ if \\
(i) $d(s_i, x) \leq k$ for all $i = 1,\ldots,r$, \\
(ii)  there is \emph{no} sensor in $S\setminus S'$ that directly measures $x$, \\
(iii) for all $y\in V$ for which (ii) holds,  there is some $s_i\in S'$ that resolves $x$ and $y$, \\
(iv) all $s_i\in S'$ directly measure $x$, i.e., $d(s_i,x) \neq d(s_j,x) + d(s_i,s_j)$ for $i,j = 1,\dots,r$, $i \neq j$. \\
\end{defn}
For a single sensor $s$, Definitions \ref{defn:resolved-within} and \ref{defn:unique-measuring} are identical. For more sensors, the difference between Definition \ref{defn:within} and \ref{defn:unique-measuring} is in parts (i) and the new criterion (iv):
heuristically, a set of sensors minimally resolve a vertex $x$ if they \emph{all directly measure it}, (i.e, the shortest paths leading to the vertex $x$ do not contain fully each other),  and the set is minimal in the sense that no other sensor directly measures $x$ by part (ii).  Part (iii), similarly to Definition \ref{defn:within}(iii), ensures that an $x\in V$ for which all the conditions hold is distinguished from all vertices $y$ in the resolved-within area of $S'$.

We call the vertices that are minimally resolved by a set of sensors the attraction of the sensor set $S'$, and this is our next definition. 
\begin{defn}[Attraction of a set of sensors]\label{defn:attraction}
Let $T=(V,E)$ be a tree with a threshold-$k$ resolving set $S\subseteq V$. The \emph{attraction} of a set of $r$ sensors $S'=\{s_1,\ldots,s_r\} \subseteq S$ is
\[A_T(s_1,\ldots,s_r)=\{v\in V\setminus S: v \text{ is minimally resolved by } \{s_1, \dots, s_r\}\}.\]
We will omit the subscript $T$ when the underlying graph is clear from the context.
\end{defn}
It is not hard to see that $A_T(s)=M_T(s)$ for a single sensor $s\in S$,  $A_T(S')$ and $A_T(S'')$ are disjoint whenever $S'\neq S''$, and that
\[ M_T(S')=\bigcup_{B\subseteq S' }A_T(B).\]

Heuristically, if a path between a sensor and a vertex does not contain any sensor from $S'$ then non of the vertices of this path can be in the attraction of $S'$. More formally, we will use the following straightforward claim in our proofs:
\begin{claim}\label{claim:measuredpaths}
For a tree $T=(V,E)$ with threshold-$k$ resolving set $S$, let $S'\subseteq S$ be any subset of sensors, and let $s\in S\setminus S'$. Assume that $x\in V$ is measured by $s$, and $V(\cP_T(s,x))$ is disjoint from $S'$. Then $V(\cP_T(s,x))$ is also disjoint from $A_T(S')$.
\end{claim}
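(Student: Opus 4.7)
The plan is to reduce Claim \ref{claim:measuredpaths} to a pointwise check: for each $y\in V(\cP_T(s,x))$, I want to exhibit some sensor $s^\star\in S\setminus S'$ that directly measures $y$, because then part (ii) of Definition \ref{defn:unique-measuring} applied to $y$ prevents $y$ from being minimally resolved by $S'$, and hence forces $y\notin A_T(S')$.

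The case $y\in S$ is immediate, since $A_T(S')\subseteq V\setminus S$ by Definition \ref{defn:attraction}. So I may assume $y\in V(\cP_T(s,x))\setminus S$; in particular $y\neq s$. The key step is the correct choice of witness sensor. Among the sensors lying on the sub-path $\cP_T(s,y)$ --- a set that is non-empty because it contains $s$ itself --- I would let $s^\star$ be one of minimum graph-distance from $y$. Then $\cP_T(s^\star,y)$ is a sub-path of $\cP_T(s,y)\subseteq \cP_T(s,x)$, and two useful properties follow: it is disjoint from $S'$ by the hypothesis of the claim, which in particular gives $s^\star\in S\setminus S'$; and by the minimality in the choice of $s^\star$, it contains no sensor other than $s^\star$. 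Since $T$ is a tree and $y$ lies on $\cP_T(s,x)$ while $s^\star$ lies on $\cP_T(s,y)$, the distances decompose as $d(s,x)=d(s,y)+d(y,x)$ and $d(s,y)=d(s,s^\star)+d(s^\star,y)$, so
\[ d(s^\star,y)\le d(s,y)\le d(s,x)\le k. \]
Combined with the absence of intervening sensors on $\cP_T(s^\star,y)$, this says exactly that $s^\star$ directly measures $y$ in the sense of Definition \ref{def:direct-measuring}, which completes the argument.

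The claim is essentially a consistency check on the definitions, and I do not anticipate any serious obstacle. The one subtle point --- and arguably the only content of the proof --- is that one cannot naively use $s$ itself as the witness sensor: another sensor from $S\setminus S'$ might lie strictly between $s$ and $y$ on the path, and that would spoil $s$'s direct measurement of $y$. Selecting the sensor closest to $y$ along $\cP_T(s,y)$ circumvents this, while the disjointness hypothesis ensures that this new witness still belongs to $S\setminus S'$ rather than to $S'$.
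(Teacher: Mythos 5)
Your proof is correct and follows the same strategy as the paper's: exhibit a sensor in $S\setminus S'$ that directly measures each $y\in V(\cP_T(s,x))$, so that Definition \ref{defn:unique-measuring}(ii) rules out $y\in A_T(S')$. The one refinement you add --- replacing $s$ by the sensor on $\cP_T(s,y)$ nearest to $y$ --- is a legitimate patch of a detail the paper's one-line argument elides, since $s$ itself need not \emph{directly} measure $y$ when another sensor of $S\setminus S'$ lies strictly between them on the path.
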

\begin{proof}
Assume that $y\in V(\cP_T(x,s))$. Since $x$ is measured by $s$, we have $d_T(s,y)\le d_T(s,x)\le k$. Hence, $y$ is also measured by $s$. Since $s\notin S'$, and $V(\cP_T(s,y))$ does not contain any sensor from $S'$, it follows that $y\notin A_T(S')$, otherwise Definition \ref{defn:unique-measuring}(ii) would be violated.
\end{proof}

Before we continue, we make a few remarks, and a few definitions about the structure of the attraction of two sensors.
\begin{remark}[Size of the attraction of a single sensor]\label{rem:one-attr}
The size of $A_T(s)$, for any sensor $s\in S$, cannot exceed $k$. Indeed, for each distance $1\le j\le k$, there can only be a single vertex at graph distance $j$ from $s$ belonging to $A_T(s)$. Suppose to the contrary that for some $j$ there are two vertices $x, x'\in A_T(s)$ with $d(s,x)=d(s,x')=j$. Then $s$ cannot distinguish between $x,x'$, hence Definition \ref{defn:unique-measuring} (iii) is violated. 
\end{remark}
\begin{remark}[Structure of the attraction of a pair of sensors]\label{rem:two-attr}
If $v$ belongs to  $A_T(s_1,s_2)$ for a pair of sensors $s_1,s_2$, then it is not possible that one of the shortest paths between $x$ to $s_1$ and $x$ to $s_2$ fully contains the other one, otherwise Definition \ref{defn:unique-measuring} (iv) would be violated, so it is not possible that either of the shortest paths from $v$ to $s_1$ and to $s_2$ does not intersect $V(\cP_T(s_1, s_2))\setminus \{s_1,s_2\}$. Hence, there are two possibilities for the location of $v$. Either $v\in V(\cP_T(s_1,s_2))\setminus\{s_1,s_2\}$ or $v$ is connected by a path to a vertex in $V(\cP_T(s_1,s_2))\setminus\{s_1,s_2\}$. Further, there cannot be any third sensor on this path, otherwise $\{s_1,s_2\}$ would not minimally resolve $v$ by Definition \ref{defn:unique-measuring} (ii). 
\end{remark}
Based on this observation, we more generally define a type and a height of a vertex with respect to a pair of sensors $s,s'$, which we will use in Sections \ref{sec:A}--\ref{sec:C}.
\begin{defn}[Type and height with respect to a sensor pair] \label{defn:type-height} Consider a tree $T$ with a threshold resolving set $S$, and let  $s, s'\in S$ and $v\notin \{s, s'\}$. We define 
\begin{equation}\label{eq:typ}
    \mathrm{typ}_{s,s'}(x):=\big(d(x, s)-d(x, s')+d(s,s')\big)/2,
\end{equation}
and we say that $x$ is of type $j$ ($j=\mathrm{typ}_{s,s'}(x)$) with respect to $s,s'$. We further define 
\begin{equation}\label{eq:height}
\mathrm{hgt}_{s,s'}(x):=d_T(x,s)-\mathrm{typ}_{s,s'}(x),
\end{equation}
and we say that it is the height of the vertex $x$ with respect to $s,s'$.
\end{defn}
A short interpretation: $x\in V$ is of type $0$ if the shortest path $\cP_T(x, s')$ from $x$ to $s'$  fully contains $\cP_T(x, s)$, and $x$ is  of type $d(s,s')$ if the situation is reversed. For $1\le j \le d(s,s')-1$, 
 $x\in V$ is of  \emph{type} $j$ if the closest vertex to $x$ on the path $\cP_T(s, s')$ is of distance $j$ from $s$. 
Similarly, the \emph{height} of a vertex $x\in V$ with respect to $s,s'$ is the distance of $x$ from the path $\cP_T(s,s')$.

Based on these definitions, the following claim is a direct consequence.
\begin{claim}[Type-difference]\label{claim:type-difference}
Consider a tree $T$ with a threshold resolving set $S$, and let  $s, s'\in S$ and $x,x'\notin \{s, s'\}$ satisfying that $x$ is measured by $s$ and $x'$ is measured by $s'$.
Then, if additionally $\mathrm{typ}_{s,s'}(x)\neq \mathrm{typ}_{s,s'}(x')$ then the set $\{x,x'\}$ is resolved by $\{s,s'\}$. 
\end{claim}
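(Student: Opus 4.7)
The plan is to prove this by contradiction via a short arithmetic manipulation using the definition of $\mathrm{typ}$ in \eqref{eq:typ}. Assume for contradiction that neither $s$ nor $s'$ resolves the pair $\{x,x'\}$, i.e., $d_k(s,x)=d_k(s,x')$ and $d_k(s',x)=d_k(s',x')$, and aim to derive $\mathrm{typ}_{s,s'}(x)=\mathrm{typ}_{s,s'}(x')$.

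The first step is to lift these truncated equalities to untruncated ones, which is precisely where the two measuring hypotheses come in. Since $x$ is measured by $s$, one has $d(s,x)\le k$, hence $d_k(s,x)=d(s,x)\le k$; the first equality then forces $d_k(s,x')\le k$ as well, so $d(s,x')=d_k(s,x')$ and therefore $d(s,x)=d(s,x')$. A symmetric argument using that $x'$ is measured by $s'$ yields $d(s',x)=d(s',x')$.

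Substituting these two identities into \eqref{eq:typ} gives
\[
\mathrm{typ}_{s,s'}(x)=\frac{d(s,x)-d(s',x)+d(s,s')}{2}=\frac{d(s,x')-d(s',x')+d(s,s')}{2}=\mathrm{typ}_{s,s'}(x'),
\]
contradicting the hypothesis. The only delicate point worth flagging is this truncation step: without the measuring assumptions on both $x$ and $x'$ one cannot replace $d_k$ by $d$ on both sides, and the argument would fail. Beyond that, no structural property of the tree is needed — the claim is purely a consequence of the arithmetic definition of type combined with the fact that truncation is transparent within the measurement radius.
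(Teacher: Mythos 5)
Your proof is correct and follows essentially the same route as the paper's: both arguments hinge on using the two measuring hypotheses to strip the truncation from $d_k$ and then reading off the type identity from \eqref{eq:typ}. The paper organizes this as a short case analysis (first disposing of the cases where one vertex is unmeasured by the other sensor, then treating the case where both vertices are measured by both sensors), whereas you argue by contradiction, but the mathematical content is identical.
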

\begin{proof}
First, if $x$ is measured only by $s$ and not by $s'$, then $d_k(s',x)= k+1 \ge  d_k(s', x')$, hence $s'$ resolves the pair $x,x'$. Analogously, if $x'$ is measured only by $s'$ but not by $s$ then $s$ resolves the pair $x,x'$. The only remaining case is when both vertices are measured by both sensors. In this case, since $x,x'$ have different types, by \eqref{eq:typ}
\[ d_k(x, s) - d_k(x, s')= d(x, s)-d(x, s') \neq d(x',s)- d(x', s') = d_k(x',s)- d_k(x', s'), \]
hence, at least one of $s,s'$ resolves $x,x'$.
\end{proof}
The next three sections are structured as follows. In each of the Sections \ref{sec:A}, \ref{sec:B}, \ref{sec:C}, we introduce a different rewiring procedure called Transformation A, B, C, respectively. Then, we state the main properties of each of these transformations in Sections \ref{sec:A-properties},  \ref{sec:B-properties}, \ref{sec:C-properties}. Then in Sections \ref{sec:A-prelim}, \ref{sec:B-prelim}, \ref{sec:C-prelim} we gather the preliminary claims to be able to prove these properties, and finally in Sections \ref{sec:A-proof}, \ref{sec:B-proof}, \ref{sec:C-proof} we prove that the transformations do exactly what we want. 

\section{Step A: Moving the attractions of single sensors into leaf-paths}\label{sec:A}

To prove Proposition \ref{prop:main}, we start with Step A:  we first show in Lemma \ref{lem:trA} that for every tree $T$ we can rewire a few edges to form another tree $\widehat{T}$ with the same vertex set and threshold-$k$ resolving set, such that every sensor has its attraction contained in a leaf-path starting from itself. This will make further transformations possible that can increase the size of the tree as well.

Recall from Remark \ref{rem:one-attr} that $|A_T(s)|\le k$ for any sensor $s$.
\begin{lemma}\label{lem:trA}
Let $T = (V,E)$ be a tree on $n$ vertices with threshold-$k$ resolving set $S\subseteq V$. For any $s\in S$ let $A_T(s) = \{v_1, v_2,\dots, v_{\ell(s)}\}$ for some $\ell(s)\le k$. Then there exists a tree $\widehat T = (V, \widehat E)$ on the same vertex set in which $S$ is also a threshold-$k$ resolving set, and in which, for each $s\in S$, $A_{\widehat T}(s)\subseteq A_{T}(s)$ with $A_{\widehat T}(s)$ being a \emph{leaf-path} emanating from $s$.
\end{lemma}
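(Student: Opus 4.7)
The strategy is to perform a sequence of local rewirings, sensor by sensor: for each fixed $s \in S$, modify the subtree around $s$ so that the attraction of $s$ becomes a clean leaf-path emanating from $s$. The key structural facts I would use are: (a) by Remark~\ref{rem:one-attr}, $A_T(s) = \{v_1, \ldots, v_\ell\}$ consists of $\ell \le k$ vertices at pairwise distinct distances $d_1 < d_2 < \cdots < d_\ell \le k$ from $s$; and (b) for each $v_i$, the unique path $\cP_T(s, v_i)$ contains no other sensor (otherwise some $s'\in S\setminus\{s\}$ would directly measure $v_i$, violating Definition~\ref{defn:unique-measuring}(ii)), so all these paths lie inside the ``$s$-pure region'' $R_s := \{v \in V : \cP_T(s, v) \cap (S \setminus \{s\}) = \emptyset\}$, a subtree of $T$ containing $s$.

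To build $\widehat T$, I would, for each sensor $s$, detach the vertices of $A_T(s)$ (together with any subtrees of $T$ hanging off them that lie purely inside $R_s$) and reattach them as a chain $s - v_1 - v_2 - \cdots - v_\ell$, with $v_\ell$ a leaf of $\widehat T$. The ``leftover'' vertices of $R_s$ that are not in $A_T(s)$ --- intermediate vertices on the original paths $\cP_T(s, v_i)$, together with any side-subtrees hanging off them --- would be reattached as additional branches from $s$, in such a way that they remain directly measured by the same sensors (other than $s$) as before, exploiting the overlap of $R_s$ with the other pure regions $R_{s'}$. Once $s$'s leaf-path is installed in $\widehat T$, the vertices on it are accessible from any other sensor only via $s$, which allows me to iterate this construction over the remaining sensors without disturbing the newly-formed leaf-paths.

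Verification that $\widehat T$ is a tree and that $S$ remains a threshold-$k$ resolving set would proceed by checking that each local rewiring preserves connectivity, introduces no cycles, leaves the inter-sensor distances intact, and preserves the direct-measuring status of every vertex by at least its original set of sensors (so that Claim~\ref{claim:measuredpaths} and its consequences continue to hold). The most delicate part --- and the main obstacle I anticipate --- is guaranteeing the inclusion $A_{\widehat T}(s) \subseteq A_T(s)$: one must prevent a vertex in $R_s \setminus A_T(s)$ from becoming \emph{newly} attracted by $s$ after the rewiring. Such a vertex $w$ originally failed condition (iii) of Definition~\ref{defn:unique-measuring} because another vertex shared its distance to $s$, or it was in some pair-attraction $A_T(s,s')$. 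To keep $w$ out of $A_{\widehat T}(s)$, the rewiring must either preserve the distance-collision responsible for excluding $w$ from $A_T(s)$, or else re-route $w$ so that the alternative sensor $s'$ still directly measures it in $\widehat T$. Working this constraint out --- and proving that such a placement of leftover vertices always exists --- is the technical crux of Transformation~A.
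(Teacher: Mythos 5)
Your outline follows the same strategy as the paper's Transformation~A (Definition~\ref{defn:trA}): for each sensor $s$ in turn, delete the edges incident to $A_T(s)$, string $v_1,\ldots,v_\ell$ into a chain hanging off $s$, reattach the resulting components to $s$, and iterate. However, two of the invariants on which your verification plan rests are not ones that such a rewiring can (or needs to) satisfy, and the parts you defer as ``the technical crux'' are exactly where the real work lies. First, you claim the rewiring ``leaves the inter-sensor distances intact.'' It cannot: any sensor $s^\star$ lying in a component $\widetilde T_i$ that was separated from $s$ by a vertex of $A_T(s)$ has its path to $s$ rerouted through the new edge $\{s,x_i\}$, which strictly shortens $d(s,s^\star)$. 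The paper does not try to preserve these distances; instead it proves a ``no communication'' statement (Claim~\ref{claim:nocomm}) showing that every distance that changes was already at least $k+1$ and remains so, and then resolvability is established by a case analysis on which components the two vertices $x,y$ lie in --- not by a blanket preservation argument. Similarly, ``preserves the direct-measuring status of every vertex by at least its original set of sensors'' is a desideratum you state but do not realize with a concrete reattachment rule; the paper's concrete rule (join $s$ to the vertex $x_i$ of $\widetilde T_i$ closest to $s$ in $T$) is chosen precisely so that the case analysis and Claim~\ref{claim:nocomm} go through.

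Second, on the point you flag as the main obstacle --- preventing a vertex $w\notin A_T(s)$ from entering $A_{\widehat T}(s)$ --- you propose to ``preserve the distance-collision responsible for excluding $w$'' or to re-route $w$ so its alternative sensor still reaches it. The paper's mechanism is simpler and you should not need the distance-collision bookkeeping at all: if $x\notin A_T(s)$ then some sensor $s^\star\ne s$ directly measures $x$, and by Claim~\ref{claim:measuredpaths} the path $\cP_T(s^\star,x)$ is disjoint from $A_T(s)$, hence none of its edges is deleted and $s^\star$ still directly measures $x$ after the rewiring. A parallel argument (Lemma~\ref{lem:trAprop}(iii)--(iv)) shows that the attractions of the \emph{other} sensors do not grow and that already-formed leaf-paths are untouched, which is what makes the iteration over $S$ legitimate. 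As written, your proposal is a correct high-level plan coinciding with the paper's, but the construction of the reattachment and the six-case resolvability check are asserted rather than carried out, and the stated invariants would have to be replaced by the weaker, provable ones above.
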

\begin{remark}
A consequence of the proof of Lemma \ref{lem:trA} is that if $|A_T(s)|$ is less than $k$ for some $s\in S$, then $T$ cannot be optimal, since we can add extra vertices to the tree to these leaf-paths and the new tree is still resolved. 
\end{remark}
The proof of Lemma \ref{lem:trA} will be based on the following rewiring operation on the tree, which we will call {\it Transformation A}.

\begin{defn}[Transformation A]\label{defn:trA}
Given a tree $T=(V,E)$ with a threshold-$k$ resolving set $S\subseteq V$ and some fixed $s\in S$, let $A_T(s)=\{v_1,\ldots,v_\ell\}$ for some $\ell\le k$. 
Denote the connected components of $T$ spanned on $V\setminus A_T(s)$ by $\widetilde{T_0},\widetilde T_1,\ldots,\widetilde T_r$ for some $r$, where $\widetilde T_0$ contains $s$. For each $1\le i\le r$ let $x_i$ be the (unique) vertex in $\widetilde T_i$ that is closest to $s$ in $T$. Define the edge sets
\begin{align}
E_1&:=\{\{u,v\}\in E(T): u\in A_T(s)\text{ or }v\in A_T(s)\},\label{eq:e1}\\
E_2&:=\Big\{\{s,v_1\}\cup\big(\cup_{i=1}^{\ell-1}\{v_i,v_{i+1}\}\big)\Big\},\label{eq:e2}\\
E_3&:=\{\{s,x_i\}:1\le i\le r\}.\label{eq:e3}
\end{align}
Then define $\tr_A(T,S,s)=(V,E')$ where $E'=(E\setminus E_1)\cup E_2\cup E_3$.
\end{defn}

For an example of Transformation A see Figure \ref{fig:trA}.

\begin{figure}[ht]
    \centering
    \begin{subfigure}[b]{0.49\textwidth}
    \centering
    \includegraphics[width=\textwidth]{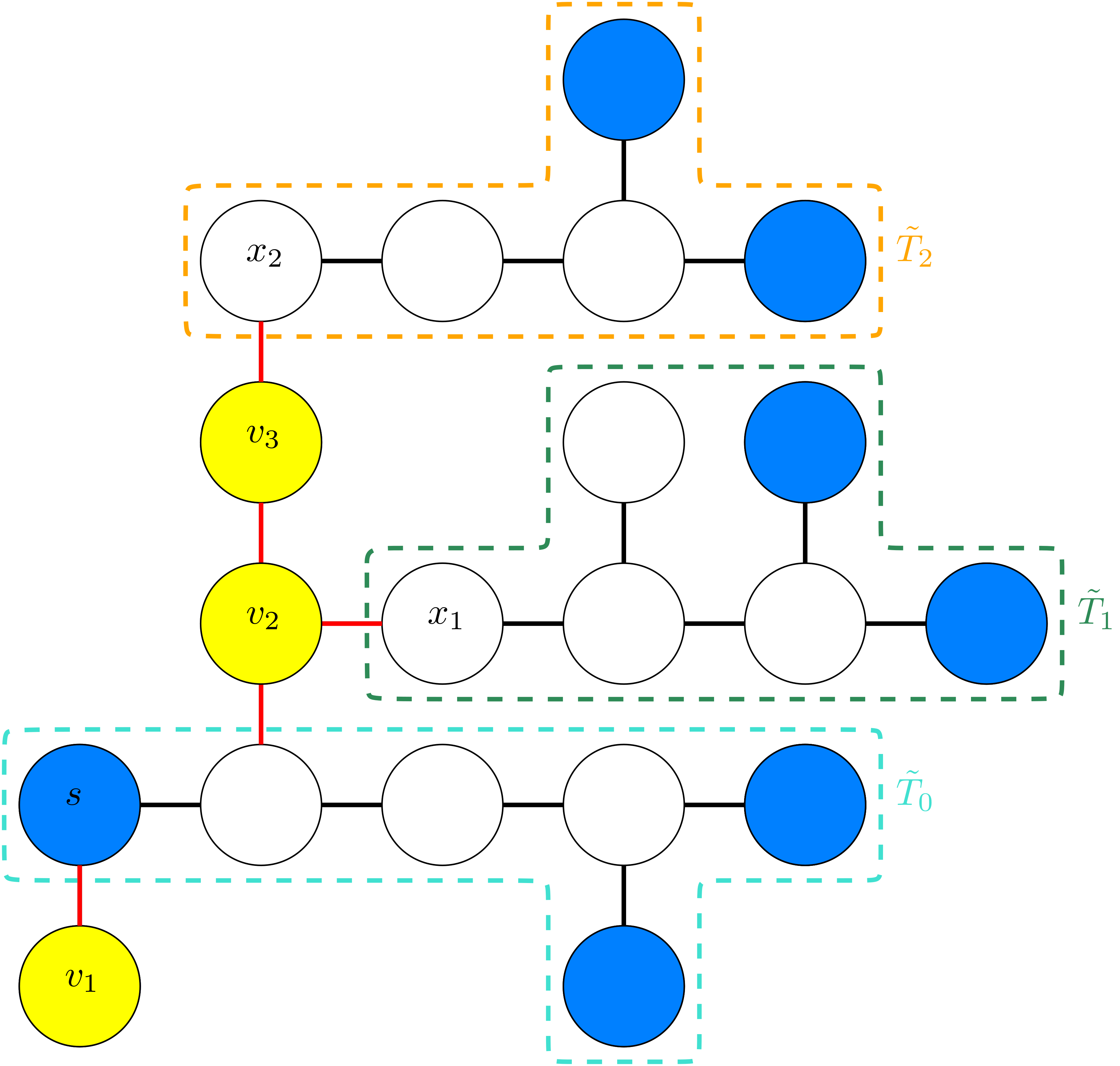}
    \end{subfigure}
    \hfill
    \begin{subfigure}[b]{0.49\textwidth}
    \centering
    \includegraphics[width=\textwidth]{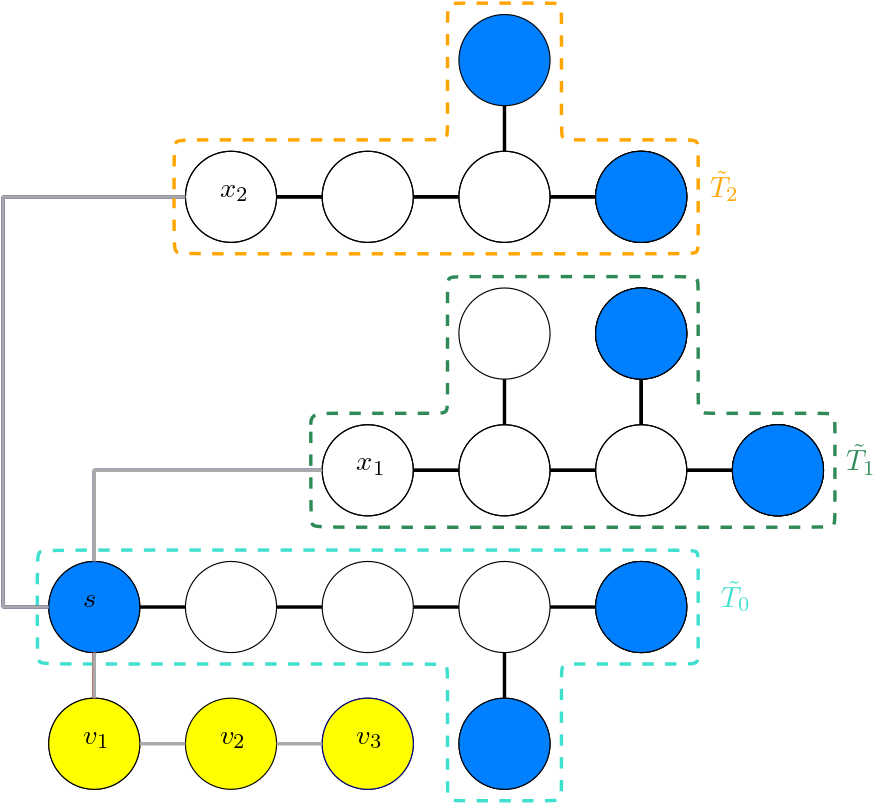}
    \end{subfigure}
    \caption{An example of transformation A with $T$ on the left and $T'=\tr_A(T, S, s)$ on the right. Here $k=3$. The blue vertices are the sensors in $S$, and the yellow vertices are in $A_T(s)$.  The red edges belong to $E_1$, and are deleted by the transformation. The grey edges belong to $E_2\cup E_3$, and are added by the transformation. The subtrees $\wT_0,\wT_1$ and $\wT_2$ are also highlighted.
    As an illustration of Claim \ref{claim:nocomm}, we can observe that sensors in $\wT_1$ and in $\wT_2$ only measure vertices inside their own subtrees, whereas $s$ measures the vertex $x_1$ in $\wT_1$. Furthermore, $s$ is necessary in both $T$ and in $T'$ to distinguish $x_1$ from the other leaf vertex at distance 2 from it in $\wT_1$.}
    \label{fig:trA}
\end{figure}

A couple of comments on this definition: $E_1$ is the set of edges that are adjacent to the vertices in $A_T(s)$ in $T$. After removing $E_1$, $E_2$ rewires $A_T(s)$ into a leaf-path emanating  from $s$ ending at $v_{\ell}$. $E_3$ connects the components on $T$ spanned on $V\setminus A_T(s)$ back together, by connecting $s$ to the (originally) closest vertex $x_i$ in each of the other components $\widetilde T_i$. 
Note that the vertices $x_i$ in the above definition are indeed well-defined, since if some $\widetilde T_i$ had two closest vertices to $s$ in $T$, then they would lie on a cycle in $T$. 

We observe that $\tr_A(T,S,s)$ is indeed a tree, i.e., connected, since the addition of the edge set $E_3$ to $\widetilde{T}$ adds exactly one connection between the components $\widetilde{T}_0$ and $\widetilde{T}_i$ for each $i=1,2,\ldots,r$, and all the vertices in $A_T(s)$ are connected to $s$ via a leaf-path. See Figure \ref{fig:trA} for an illustration.

\subsection{Properties of Transformation A and their consequences}\label{sec:A-properties}
\begin{lemma}[Properties of Transformation A]\label{lem:trAprop}
Let $T=(V,E)$ be a tree with a threshold-$k$ resolving set $S\subseteq V$. Fix some $s\in S$, and consider $T':=\mathrm{tr}_A(T, S, s)$. Then the following hold:
\begin{itemize}
    \item[(i)] $S$ remains a threshold-$k$ resolving set for the tree $T'$.
    \item[(ii)] $A_{T'}(s)=A_T(s)$, and $A_{T'}(s)$ forms a leaf-path emanating from $s$.
    \item[(iii)] For any sensor $s^\star\in S\setminus \{s\}$, if $A_T(s^\star)$ is a leaf-path emanating from $s^\star$ in $T$, then $A_{T'}(s^\star)=A_T(s^\star)$ is still a leaf-path emanating from $s^\star$ in $T'$. 
    \item[(iv)] For any sensor $s^\star\in S\setminus\{ s\}$, if $A_T(s^\star)$ is not a leaf-path emanating from $s^\star$, then $A_{T'}(s^\star)\subseteq A_{T}(s^\star)$.
\end{itemize}

\end{lemma}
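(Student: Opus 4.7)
The proof reduces to careful bookkeeping of distances in $T'$ versus $T$. I would first establish four foundational facts:
(a) for any $u,v$ in the same component $\widetilde T_i$ of $T\setminus A_T(s)$, $d_{T'}(u,v)=d_T(u,v)$, since Transformation A modifies no edge internal to any $\widetilde T_i$;
(b) for each $i\ge 1$, $\widetilde T_i$ is joined to $A_T(s)$ in $T$ by a single edge $\{x_i,a_i\}$ (a second would close a cycle), so $x_i$ is the unique entry of $\widetilde T_i$ in both $T$ and $T'$, with $d_{T'}(s,u)=1+d_T(x_i,u)$ for $u\in\widetilde T_i$;
(c) by construction, $d_{T'}(s,v_j)=j$ and $d_{T'}(s^\star,v_j)=d_{T'}(s^\star,s)+j$ for each $s^\star\neq s$, so $s$ is the sole direct measurer of every $v_j$ in $T'$; and
(d) for any $s'\neq s$ and $a\in A_T(s)$, the unique $T$-path from $s'$ to $a$ either has length exceeding $k$ or passes through $s$ --- a direct consequence of Definition \ref{defn:unique-measuring}(ii) combined with Claim \ref{claim:measuredpaths}.

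For (ii), the leaf-path structure is immediate from $E_2$. The inclusion $A_T(s)\subseteq A_{T'}(s)$ follows from (c): the $v_j$'s sit at distinct distances $1,\ldots,\ell\le k$ from $s$, and every other sensor must cross $s$. For the reverse, I would argue by contrapositive: if $u\notin A_T(s)$, I split by which clause of Definition \ref{defn:unique-measuring} fails for $\{s\}$ and $u$ in $T$ and show the same clause fails in $T'$. Clause (ii) is inherited by applying Claim \ref{claim:measuredpaths} and (a) to the directly-measuring sensor $s'\neq s$; clause (i) is inherited via (b); and clauses (iii)--(iv), when relevant, follow from the preservation of distances along the relevant paths inside a single component.

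For (i), measurement is automatic: $A_T(s)$ is covered by $s$ via the leaf-path, and any $u$ outside retains an in-component measurer by (a). For distinguishability of a pair $u,v$: if both lie in the same $\widetilde T_i$ with $i\ge 1$, the entry-via-$x_i$ formula yields the same distance difference $d(x_i,u)-d(x_i,v)$ from every external sensor in both $T$ and $T'$, preserving distinguishability, while internal sensors preserve distances by (a). If both lie in $\widetilde T_0$, internal-$\widetilde T_0$ distinguishing power is preserved by (a), and I would argue via case analysis that the original $T$-distinguishing sensor is replaced in $T'$ either by itself or by $s$, whose newly central position in $\widetilde T_0$ gives it extra distinguishing power. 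Pairs with at least one endpoint in $A_T(s)$ are handled via (c), combined with the sensor that distinguished the other endpoint.

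For (iii), a leaf-path $A_T(s^\star)$ in $T$ lies entirely within the component containing $s^\star$ --- it cannot traverse $A_T(s)$ while remaining a leaf-path emanating from $s^\star$ --- so the $s^\star$-analogue of (ii) applies via (a). For (iv), any $u\in A_{T'}(s^\star)\setminus A_T(s^\star)$ would supply a distinct sensor $s''$ directly measuring $u$ in $T$, and Claim \ref{claim:measuredpaths} together with (a) would force $s''$ to still directly measure $u$ in $T'$, contradicting minimality. \textbf{The main obstacle} is the $\widetilde T_0$-distinguishability question inside (i): when a pair of vertices in $\widetilde T_0$ was distinguished in $T$ only by an external sensor, the asymmetric change of external distances under the transformation (shortcuts through $s$ can either shrink or re-route such distances depending on where the edges from $A_T(s)$ attach to $\widetilde T_0$) forces one to identify a surviving distinguishing sensor in $T'$, and this is where the fine structure supplied by (d) and the central role of $s$ must be exploited.
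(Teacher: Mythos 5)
Your overall strategy is the same as the paper's: decompose $T\setminus A_T(s)$ into components $\widetilde T_0,\widetilde T_1,\ldots,\widetilde T_r$, establish that sensors in $\widetilde T_i$ ($i\ge 1$) cannot measure across components (your facts (a)--(d) play the role of the paper's Claims \ref{claim:measuredpaths} and \ref{claim:nocomm}), and then run a case analysis on the locations of the pair $x,y$. Most of the cases are handled as in the paper, and your part (ii) and part (iii) arguments match the paper's (for (iii) you should additionally note that no $E_3$-edge can attach to a vertex of $A_T(s^\star)$, because $s^\star$ itself is strictly closer to $s$ than any vertex on its leaf-path, so $x_i\notin A_T(s^\star)$).

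However, you have misplaced the difficulty, and this leaves two genuine soft spots. First, the case you flag as ``the main obstacle'' --- a pair $x,y\in\widetilde T_0$ resolved in $T$ only by a sensor outside $\widetilde T_0$ --- is vacuous: a resolving sensor must measure at least one of $x,y$, and by the no-communication property a sensor in $\widetilde T_i$, $i\ge 1$, is at distance at least $k+1$ from everything outside $\widetilde T_i$; so the resolving sensor lies in $\widetilde T_0$ and all relevant paths are unchanged. The genuinely delicate case is instead $x,y\in\widetilde T_i$ with $i\ge 1$ resolved in $T$ only by some $s'\in\widetilde T_0\setminus\{s\}$. There your argument --- that the distance \emph{difference} $d(x_i,x)-d(x_i,y)$ seen from every external sensor is the same in $T$ and $T'$ --- is not sufficient, because the absolute distances from $s'$ change and hence the truncation thresholds move: $s'$ may have resolved the pair in $T$ by measuring $x$ but not $y$, and in $T'$ it may measure both or neither. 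The paper's fix is to transfer the resolving role to $s$: since $\cP_T(s',x)$ must contain $s$ (Claim \ref{claim:nocomm}(ii)) and $\cP_T(s',s)\subseteq\cP_T(s',y)$, the sensor $s$ already resolves $x,y$ in $T$, and its distances into $\widetilde T_i$ only \emph{decrease} (by the common constant $d_T(s,x_i)-1$) in $T'$, so $s$ still resolves them. You need this transfer step. Second, in part (iv) your contradiction argument assumes the competing direct measurer $s''$ of $u$ keeps its measuring path intact via Claim \ref{claim:measuredpaths} and (a); this fails when $s''=s$ and $\cP_T(s,u)$ passes through $A_T(s)$ (Claim \ref{claim:measuredpaths} is inapplicable with $S'=\{s\}$ and measuring sensor $s$, and that path is destroyed by the transformation). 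In that subcase one instead observes that $u\in\widetilde T_i$ for some $i\ge 1$, that $x_i\in V(\cP_T(s,u))$, and that the new edge $\{s,x_i\}$ yields $d_{T'}(s,u)\le d_T(s,u)\le k$ along a sensor-free path, so $s$ still directly measures $u$ in $T'$, giving the desired contradiction. Both repairs are available from your own foundational facts, but as written the proposal does not make them.
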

Observe that (iii) ensures that attraction of sensors that are already leaf-paths are left untouched by $\tr_A$, while (iv) ensures that for sensors  with attraction that are not (entirely) leaf-paths, $\tr_A$ potentially decreases the number of vertices in the attraction, but never adds new vertices to it. 
\begin{proof}[Proof of Lemma \ref{lem:trA} subject to Lemma \ref{lem:trAprop}]
Let $S=\{s_1,s_2,\ldots,s_m\}$. Let $T_0=T$, and then let us iteratively define $T_i:=\tr_A(T_{i-1},S,s_i)$ for $1\le i\le m$. We prove that $\widehat T=T_m$ satisfies the conditions of the Lemma.
Indeed, for $T_1$ it is true that $A_{T_1}(s_1)=A_T(s_1)$ is a leaf-path emanating from $s_1$ by Lemma \ref{lem:trAprop}(ii), for all other sensors $s_j$, $j\ge 2$, $A_{T_1}(s_j)\subseteq A_T(s_j)$, and $S$ is still a threshold-$k$ resolving set for $T_1$. Inductively we can then assume that in $T_i$, the vertices of $A_T(s_1),\ldots,A_T(s_i)$ already form leaf-paths emanating from $s_1, \ldots, s_i$, respectively, $ A_{T_i}(s_j)\subseteq A_T(s_j)$ for all $j\ge 1$, and $S$ is a threshold-$k$ resolving set in $T_i$. Then $T_{i+1}=\tr_A(T_i, S, s_{i+1})$ moves the vertices $A_{T_i}(s_{i+1})= A_T(s_{i+1})$ into a leaf-path emanating from $s_{i+1}$, and leaves the attraction of sensors $j\le i$ intact, i.e., $A_{T_{i+1
}}(s_j)=A_{T_i}(s_j)$ is a leaf-path emanating from $s_j$ for all $j\le i$ by Lemma \ref{lem:trAprop} (iii). And, for $j\ge i+1$  it holds that $A_{T_{i+1}}(s_j)\subseteq A_{T_i}(s_j) \subseteq A_T(s_j)$ by Lemma \ref{lem:trAprop} (iv) and the inductive assumption. 
This means that in $T_{i+1}$ already $A_T(s_1), \dots, A_T(s_{i+1})$ are all leaf-paths emanating from their respective sensors. Finally $S$ still resolves $T_{i+1}$, hence the induction can be advanced. When $i=m$, the attraction of all sensors have been transformed into leaf-paths, and $S$ still resolves $T_m$, finishing the proof.
\end{proof}
\subsection{Preliminaries to treating Transformation A}\label{sec:A-prelim}
We start with a basic property related to $\tr_A$, ensuring that sensors in the components $\widetilde T_1, \dots \widetilde T_r$ can only measure vertices within their own component.
\begin{claim}[No `communication' between different subtrees]\label{claim:nocomm} Consider the notation of Definition \ref{defn:trA}, and let $T':=\tr_A(T, S, s)$. 
\begin{itemize}
    \item[(i)] Let $s^\star$ be any sensor in $\widetilde T_i$ for some $i\in\{1, \dots, r\}$. Then, for all vertices $y\notin \widetilde T_i$, $d_T(s^\star, y)\ge k+1$ and $d_{T'}(s^\star, y)\ge k+1$ both hold.
    \item[(ii)]Let $s^\star\neq s$ be any sensor in $\wT_0$. Then, for any $y\in \wT_i$, $i\ge 1$, either $d_T(s^\star,y)\ge k+1$, or the shortest path from $s^\star$ to $y$ in $T$ contains $s$. 
\end{itemize}
\end{claim}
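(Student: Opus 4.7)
Both parts rest on the observation that $A_T(s)$ acts as a vertex separator in $T$: removing it splits $T$ into the components $\wT_0, \wT_1, \ldots, \wT_r$, so any path in $T$ between vertices in distinct components contains at least one element of $A_T(s)$. Moreover, since $T$ is a tree, for each $i \ge 1$ the subtree $\wT_i$ has a \emph{unique} vertex (namely $x_i$) adjacent to any vertex of $A_T(s)$: a second such vertex would create a cycle, closed through $A_T(s)\cup\{s\}$.

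For part (i), fix $s^\star \in \wT_i$ with $i \ge 1$ and $y \notin \wT_i$. Any path in $T$ from $s^\star$ to $y$ must leave $\wT_i$ through $x_i$ and then enter a neighbor $v \in A_T(s)$ of $x_i$. The key step is to show $d_T(s^\star, v) \ge k+1$. If instead $d_T(s^\star, v) \le k$, then the sensor on $\cP_T(s^\star, v)$ closest to $v$ (possibly $s^\star$ itself) lies in $\wT_i \subseteq V\setminus\{s\}$ and directly measures $v$: its shortest path to $v$ is a subpath of $\cP_T(s^\star, v)$ by uniqueness of paths in a tree, and therefore contains no further sensor. This contradicts $v \in A_T(s)$ via Definition \ref{defn:unique-measuring}(ii). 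Consequently $d_T(s^\star, x_i) \ge k$, and then $d_T(s^\star, y) \ge d_T(s^\star, x_i) + 1 \ge k+1$. The bound in $T'$ is then immediate: by construction, $\wT_i$ is attached to its complement in $T'$ only through the edge $\{s, x_i\}$, and $\tr_A$ modifies no edge inside $\wT_i$, so $d_{T'}(s^\star, y) \ge d_{T'}(s^\star, x_i) + 1 = d_T(s^\star, x_i) + 1 \ge k+1$.

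For part (ii), let $s^\star \in \wT_0 \setminus \{s\}$ and $y \in \wT_i$ for some $i \ge 1$, and suppose $d_T(s^\star, y) \le k$. Let $v$ be the first vertex of $A_T(s)$ encountered on $\cP_T(s^\star, y)$ starting from $s^\star$; this exists by the separator observation. Then $d_T(s^\star, v) \le k$ and all internal vertices of $\cP_T(s^\star, v)$ lie in $\wT_0$. If $s \notin V(\cP_T(s^\star, v))$, the same reasoning as in part (i) produces a sensor in $\wT_0 \setminus \{s\}$ that directly measures $v$, again contradicting $v \in A_T(s)$. Hence $s \in V(\cP_T(s^\star, v)) \subseteq V(\cP_T(s^\star, y))$, as required.

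The content is really a single idea applied twice: no sensor other than $s$ can come within distance $k$ of any vertex of $A_T(s)$ via a path avoiding $s$, since it would then itself directly measure that vertex. The only point requiring genuine care is identifying the unique entry vertex $x_i$ into $\wT_i$ (both in $T$ and in $T'$), which is a standard consequence of $T$ being a tree; once this is in place, both claims reduce to the minimality clause in Definition \ref{defn:unique-measuring}.
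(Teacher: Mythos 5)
Your core distance argument---that no sensor other than $s$ can come within distance $k$ of a vertex of $A_T(s)$ along a path avoiding $s$, since it would then directly measure that vertex and violate Definition~\ref{defn:unique-measuring}(ii)---is exactly the engine of the paper's proof, and your part (ii) is correct as written. However, the separator structure you build around this idea in part (i) rests on a false claim that your argument genuinely uses. It is not true that each $\wT_i$, $i\ge 1$, meets $A_T(s)$ in a unique vertex: your cycle argument presupposes that two attachment points of $\wT_i$ would be joined by a path inside $A_T(s)\cup\{s\}$, but $A_T(s)\cup\{s\}$ need not be connected, because a vertex lying on $\cP_T(s,v)$ for some $v\in A_T(s)$ can itself fail to be in $A_T(s)$ by being directly measured by another sensor. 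Concretely, take $k=4$ and let $T$ consist of the path $s-v_0-u_1-v_1$, edges $u_1-u_2$ and $u_2-v_2$, and two pendant paths of length $4$ from $u_1$ to a sensor $s'$ and from $u_2$ to a sensor $s''$. With $S=\{s,s',s''\}$ one checks that $S$ is a threshold-$4$ resolving set and $A_T(s)=\{v_0,v_1,v_2\}$, so the single component $\wT_1$ containing $u_1,u_2,s',s''$ attaches to $A_T(s)$ at the two distinct vertices $u_1=x_1$ and $u_2$. Taking $s^\star=s''$ and $y=v_2$, the path $\cP_T(s'',v_2)$ leaves $\wT_1$ at $u_2$, not at $x_1$, and your inequality $d_T(s^\star,y)\ge d_T(s^\star,x_1)+1$ reads $5\ge 6$, which is false; the conclusion $d_T(s'',v_2)\ge k+1$ happens to hold, but not for the reason you give.

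The repair is short and is in effect what the paper does via Claim~\ref{claim:measuredpaths}. For the bound in $T$, let $v$ be the \emph{first} vertex of $A_T(s)$ on the actual path $\cP_T(s^\star,y)$ (it exists by the separator observation, and $s$ cannot precede it since $s\in\wT_0$); every vertex of $\cP_T(s^\star,v)$ except $v$ lies in $\wT_i$, so your closest-sensor argument yields $d_T(s^\star,v)\ge k+1$ and hence $d_T(s^\star,y)\ge d_T(s^\star,v)\ge k+1$, with no reference to $x_i$. For the bound in $T'$ you do need $d_T(s^\star,x_i)\ge k$; obtain it by applying the same argument to the neighbour $v_0$ of $x_i$ in $A_T(s)$ lying on $\cP_T(x_i,s)$. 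Here the path $\cP_T(s^\star,v_0)$ really is $\cP_T(s^\star,x_i)$ followed by the edge $\{x_i,v_0\}$, because $x_i$ is the unique neighbour of that particular vertex $v_0$ inside $\wT_i$ (two such neighbours would close a cycle entirely within $\wT_i\cup\{v_0\}$, which is a genuine contradiction). This gives $d_T(s^\star,x_i)=d_T(s^\star,v_0)-1\ge k$, after which your $T'$ computation goes through, since in $T'$ the component $\wT_i$ is indeed attached to the rest only by the edge $\{s,x_i\}$.
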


\begin{proof}
Proof  of (i): 
Consider a vertex $y\notin \widetilde T_i$. 
Using the notation of Definition \ref{defn:trA}, the vertex closest to $s$ within $\widetilde T_i$ on the path $\cP_{T}(s^\star,s)$ is $x_i$. Since  $x_i\in\wT_i$, the edges of $\cP(s^\star,x_i)$ are present in both $T$ and $T'$. Moreover, $x_i$ has a neighboring vertex $v\in A_T(s)$ in $T$, by the definition of $\tr_A$, see the comments below the definition. $v$ cannot be measured (in $T$) by any sensor in $\wT_i$, otherwise that sensor would measure $v$ via a path not containing $s$, and Definition \ref{defn:unique-measuring}(iii) would be violated as $v\in A_T(s)$. Hence, $d_T(s^\star,v)\ge k+1$, and
\[
d_T(s^\star,x_i)\ge d_T(s^\star,v)-1\ge k+1-1=k.
\]
These two facts will imply both conclusions as we argue next.

First, as $y\notin\wT_i$, the path $\cP_T(s^\star,y)$ contains $v\in A_T(s)$, and we also have that $\cP_T(s^\star,v)$ does not contain $s$. Hence, if $s^\star$ measures $y$ in $T$, then it also measures $v$ in $T$, contradicting Claim \ref{claim:measuredpaths}, as $v\in A_T(s)$. Hence, $d_T(s^\star,y)\ge k+1$ holds.

Next, we will show that $d_{T'}(s^\star,y)\ge k+1$ as well. Since $y\notin\wT_i$ the path $\cP_{T'}(s^\star,y)$ contains $s$, since the only connection between $\widetilde T_i$ and $V\setminus V(\widetilde T_i)$ is the edge $(s, x_i)$ in $T'$.
This implies that 
\[
d_{T'}(s^\star,y)\ge d_{T'}(s^\star,s)=d_{T'}(s^\star,x_i)+1=d_T(s^\star,x_i)+1\ge k+1.
\]

Proof of (ii): Assume that $d_T(s^\star,y)\le k$. Then, since $s^\star\in\wT_0$ and $y\in\wT_i$, $i\ge 1$, the path $\cP_T(s^\star,y)$ contains a vertex  $v\in A_T(s)$. Hence, by Claim \ref{claim:measuredpaths}, $\cP_T(s^\star,y)$ has to contain $s$, otherwise Definition \ref{defn:unique-measuring} (ii) would be violated for $v$ belonging to $A_T(s)$. 
\end{proof}

\subsection{Proof that Transformation A works}\label{sec:A-proof}
\begin{proof}[Proof of Lemma \ref{lem:trAprop}]
{\bf Proof of (i):} Let $x, y\in V$ be a pair of distinct vertices. We shall prove that there is a sensor in $S$ that resolves them in $T'$. We will use the notations of Definition \ref{defn:trA}. We will do a case-distinction analysis with respect to the location of $x$ and $y$ in the components $\wT_i, i\ge 0$ described in the transformation. We start with cases when neither of the vertices are in $A_T(s)$:

{\bf Case 1:} Assume that $x\in\wT_i$ and $y\in\wT_j$ for some $i\ge 1$, $j\ge 0$, $i\ne j$. Then, since $x\notin A_T(s)$, there is a sensor $s'\in S\setminus\{s\}$ that measures $x$ such that $\cP_T(s',x)$ does not include $s$. Then, by Claim \ref{claim:nocomm}(i)--(ii), $s'\in\wT_i$. Therefore, the edges of $\cP_T(s',x)$ are unchanged in $T'$, so $s'$ still measures $x$ in $T'$. However, it does not measure $y\in \wT_j$ in $T'$ by Claim \ref{claim:nocomm}(i). Hence, $s'$ resolves $x$ and $y$ in $T'$.

{\bf Case 2:} Now assume that $x,y\in\wT_i$ for some $i\ge 1$. Let $s'\in S$ be a sensor that resolves $x$ and $y$ in $T$. Then $s'$ has to measure at least one of $x$ and $y$ in $T$, hence $s'\notin\wT_j$ for $j\ge 1$, $j\ne i$ by Claim \ref{claim:nocomm}(i). There are two (sub)cases: either $s'\in\wT_i$, or $s'\in \wT_0$. First we consider $s'\in \wT_i$.
Then the paths $\cP_T(s',x)$, $\cP_T(s',y)$ do not contain any vertex in $A_T(s)$ (since both endpoints of both paths belong to $\wT_i$). Therefore, the edges of $\cP_T(s',x)$ and $\cP_T(s',y)$ are all still present in $T'$, and $s'$ resolves $x$ and $y$ in $T'$.

If $s'\in \wT_0$, then either $s'=s$ or $s'\neq s$. We start with the case $s'=s$. Recall $x_i$ from Definition \ref{defn:trA}.
Then $\{s,x_i\}\in E_3$ (an edge added to create $T'$). Since every path $\cP_T(s,v)$, $v\in\wT_i$ starts with the segment $\cP_T(s,x_i)$ in $T$, that we replaced with the single edge $\{s, x_i\}$ to obtain $\cP_{T'}(s,v)$,  the following holds for all $v\in \wT_i$:
\[d_{T'}(s,v)=d_T(s,v)-d_T(s,x_i)+1,\]
 Hence, the difference of the distances does not change:
\[
d_{T'}(s,x)-d_{T'}(s,y)=d_T(s,x)-d_T(s,y)\neq 0,
\]
the latter being nonzero by the assumption that $s$ resolves $x,y$ in $T$. Since these distances in $T'$ are less than in $T$, $s$ still resolves $x,y$ in $T'$.

The last possibility is that $s'\in\wT_0\setminus\{s\}$. Then $s'$ has to measure at least one of $x$ and $y$ in $T$, say it measures $x$. Then, by Claim \ref{claim:nocomm}(ii), $\cP_T(s',x)$ contains $s$. On the path $\cP_T(s,x)$, there has to be at least one vertex in $A_T(s)$ (since $x\in\wT_i$), let the closest one to $x$ be $u$. Then, since $\wT_i$ is a connected component in $V\setminus A_T(s)$, and $y\in\wT_i$ (and $T$ is a tree), $u$ is also on the path $\cP_T(s',y)$. Hence, $\cP_T(s',s)\subseteq\cP_T(s',u)\subseteq\cP_T(s',x)\cap\cP_T(s',y)$. This implies that
\[
    d_T(s,x)=d_T(s',x)-d_T(s',s),\quad \text{and} \quad
    d_T(s,y)=d_T(s',y)-d_T(s',s).
\]
Therefore, if $s'$ resolves $x$ and $y$ in $T$, then $s$ also resolves them in $T$, and then the reasoning of the previous paragraph applies, and so $s$ resolves $x,y$ also in $T'$.

{\bf Case 3:} Next, suppose that $x,y\in\wT_0$, and let $s'$ be a sensor that resolves them in $T$. Then $s'$ measures at least one of $x$ and $y$ in $T$, which, by Claim \ref{claim:nocomm}(i) is only possible if $s'\in\wT_0$. (Here $s'$ may or may not be $s$.) This implies that the edges of the paths $\cP_T(s',x)$ and $\cP_T(s',y)$ stay intact in $T'$, thus they are still distinguished by $s'$ in $T'$.

We continue with cases for which at least one of $x,y\in A_T(s)$:

{\bf Case 4:} If $x$ and $y$ are both in $A_T(s)$, then in $T'$ they will both be part of a single leaf-path (of length at most $k$) emanating from $s$, hence $s$ will resolve them in $T'$.

{\bf Case 5:} The next case is when $x\in A_T(s)$ and $y\in\wT_i$ for some $i\ge 1$. Since $y\notin A_T(s)$, there has to be a sensor $s'$ that measures $y$ such that $\cP_T(s',y)$ does not contain $s$. Further, since $y\in\wT_i$ and by Claim \ref{claim:nocomm}(ii), this is only possible if $s'\in\wT_i$. Then the vertices of the path $\cP_T(s',y)$ remain unchanged in $T'$, so $s'$ still measures $y$ in $T'$. However, it cannot measure $x\in A_T(s)$ in $T'$ by Claim \ref{claim:nocomm}(i). Thus, $s'$ distinguishes $x$ and $y$ in $T'$.

{\bf Case 6:} The final case is when $x\in A_T(s)$ and $y\in\wT_0$. Then there exists a sensor $s'\in S\setminus\{s\}$ which measures $y$ such that the path $\cP_T(s',y)$ does not contain $s$. By Claim \ref{claim:nocomm} (ii), $s'\in\wT_0$. Then there are two possible subcases.

First assume that $V(\cP_T(s,s'))\cap V(\cP_T(s',y))=\{s'\}$, i.e., $s,s',y$ all lie on a path in $\wT_0$ in this order. Then in $T'$, $x$ is added to a leaf-path emanating from $s$, and the edges of $\cP_T(s,s')$, $\cP(s',y)$ are unchanged, hence $x,s,s',y$ will all lie on a single path in $T'$ in this order. Then, since $x$ is measured by $s$ and $y$ is measured by $s'$, and $\mathrm{typ}_{s,s'}(x)=0$ while $\mathrm{typ}_{s,s'}(y)=d_{T'}(s,s')$, by Claim \ref{claim:type-difference}, at least one of $s,s'$ resolves $x,y$.

Next, assume that $V(\cP_T(s,s'))\cap V(\cP_T(s',y))\ne\{s'\}$. Then, in fact, $V(\cP_T(s,s'))\cap V(\cP_T(s',y))=V(\cP_T(s',v))$ for some $v\ne s'$, i.e., $\mathrm{typ}_{s,s'}(y)=\mathrm{typ}_{s,s'}(v)\neq 0$. For this we also know that $v\ne s$, since $\cP_T(s',y)$ does not contain $s$). Observe that 
 the edges of the paths $\cP_T(s,v)$, $\cP_T(s',v)$, $\cP_T(y,v)$ are unchanged in $T'$. We also know that $\mathrm{typ}_{s,s'}(x)=0$ in $T'$ since $x$ is in a leaf-path emanating from $s$ in $T'$. Hence, Claim \ref{claim:type-difference} applies with $y:=x'$ so at least one of $s, s'$ resolves $x,y$.

{\bf Proof of (ii):} Using the notation of Definition \ref{defn:trA}, let $A_T(s)=\{v_1,\ldots,v_\ell\}$. We claim that $A_{T'}(s)=\{v_1, \dots, v_\ell\}$. Indeed, $\{v_1, \dots, v_\ell\}\subseteq A_{T'}(s)$ since all these vertices are measured by $s$ in $T'$, and $(s, v_1,\ldots,v_\ell)$ forms a leaf-path in $T'$, so the shortest path from every other sensor $s^\star\in S$ to any $v_i$ contains the sensor $s$ and hence Definition \ref{defn:unique-measuring} (iii) would be violated otherwise.

In order to show that $A_{T'}(s)\subseteq\{v_1, \dots, v_\ell\}$, consider a vertex $x\in V\setminus (A_T(s)\cup S)$. Since $x\notin A_T(s)$, there exists (at least one) $s^\star\in S\setminus\{s\}$ that directly measures $x$. We will show that in this case 
\begin{equation}\label{eq:other-sensor-measuring-x}
    d_{T'}(s^\star,x)\le k \quad \text{ and }\quad s\notin V(\cP_{T'}(s^\star,x)),
\end{equation} hence $x$ cannot be in $A_{T'}(s)$, either. Consider the path $\cP_T(s^\star,x)$. Since it has length at most $k$, and it does not contain $s$, none of its vertices can be in $A_T(s)$ by Claim \ref{claim:measuredpaths}. It follows that $\tr_A(T, S, s)$ does not remove any of the edges in $\cP_T(s^\star,x)$, since none of them are adjacent to any vertex in $A_T(s)$.  Hence, all the edges of $\cP_T(s^\star,x)$ are present in $T'$. As a result, \eqref{eq:other-sensor-measuring-x} holds and $x\notin A_{T'}(s)$.

Finally, $A_{T'}(s)=A_T(s)$ forms a leaf-path emanating from $s$ in $T'$, because $E_2$ forms exactly that leaf-path in \eqref{eq:e2}, finishing the proof of (ii). 

{\bf Proof of (iii):} Since the vertices of $A_T(s^\star)$ are all on a leaf-path emanating from $s^\star$, the shortest path between any of them and $s$ contains $s^\star$, hence,  these vertices, (including $s^\star$), cannot belong to $A_T(s)$. Therefore, when executing Transformation A at $s$, all the edges between vertices of $A_T(s^\star)\cup\{s^\star\}$  remain intact. Moreover, $d_T(s,s^\star)\le d_T(s,x)$ for any $x\in A_T(s^\star)$, so such $x$ is not the closest vertex in the component to $s$, hence in the edge set $E_3$ of \eqref{eq:e3} none of the edges connect to any $x\in A_T(s^\star)$. This shows that $A_T(s^\star)$ still forms a leaf-path in $T'$ emanating from $s^\star$, and $A_{T}(s^\star)\subseteq A_{T'}(s^\star)$. The fact that $A_{T'}(s^\star)\subseteq A_{T}(s^\star)$ is proved in the same way as part (iv) below.

{\bf Proof of (iv):} For an indirect proof, let us assume that there is a vertex $x\in A_{T'}(s^\star)$, such that $x\notin A_T(s^\star)$, i.e., a new vertex is added to the attraction of sensor $s^\star$ because of the transformation. We observe that $x\in A_{T'}(s^\star)$ implies by Definition \ref{defn:unique-measuring}(i) that the path $\cP_{T'}(s^\star,x)$ has length at most $k$, and it does not contain $s$, or any other sensor besides $s^\star$, by Definition \ref{defn:unique-measuring}(ii) and (iii). This means that the edges of the path $\cP_{T'}(s^\star,x)$ can be neither in $E_3$ nor in $E_2$. That is, all edges of $\cP_{T'}(s^\star,x)$ are also present in $T$. If despite this $x\notin A_T(s^\star)$, it is only possible if there is a sensor $s'\in S\setminus\{s^\star\}$ such that the path $\cP_T(s',x)$ has length at most $k$, and it contains no sensors besides $s'$ in $T$. Then there are the following two possibilities.

If $s'\ne s$, then none of the vertices in $\cP_T(s',x)$ can belong to $A_T(s)$ because the path $\cP_T(s',x)$ does not contain $s$ by assumption. Hence none of the edges of $\cP_T(s',x)$ is in $E_1$, i.e., the transformation leaves these edges untouched. Therefore, $\cP_T(s',x)$ is still a path in $T'$ (of length at most $k$), contradicting the assumption that $x\in A_{T'}(s^\star)$, since both $s^\star$ and $s'$ directly measure $x$ in $T'$.

If $s'=s$, and none of the vertices of $\cP_T(s,x)$ is in $A_T(s)$, then $E(\cP_T(s,x))$ is disjoint from $E_1$, hence $\cP_T(s,x)$ is still a path in $T'$ (of length at most $k$). Since $s^\star\notin V(\cP_T(s,x))$, this contradicts the assumption that $x\in A_{T'}(s^\star)$.

The only remaining case is that $s'=s$, and at least one vertex in $V(\cP_T(s,x))$ belongs to $A_T(s)$. This means that $s$ and $x$ get disconnected in $\widetilde{T}=(V,E\setminus E_1)$, i.e., we may assume $x\in \widetilde T_i$ for some $i\ge 1$.   In this case, the edge set $E_3$ contains an edge from $s$ to $x_i$ (see Def. \ref{defn:trA}. This vertex $x_i$ must lie on the path $\cP_T(x,s)$, otherwise there would be a cycle in $T$. Hence, $V(\cP_{T'}(s,x))\subseteq V(\cP_T(s,x))$. Therefore, the path $\cP_{T'}(s,x)$ is of length at most $k$, and it does not contain any other sensor besides $s$, contradicting the assumption that $x\in A_{T'}(s^\star)$.
This shows that $A_{T'}(s^\star)\subseteq A_T(s^\star)$ for all $S\setminus \{s\}$, and  finishes the proof of (iv).
\end{proof}

Lemma \ref{lem:trA} shows us that it is sufficient to consider optimal trees that have the attractions of single sensors contained in leaf-paths attached to the corresponding sensors. Next we analyze the arrangement of the attraction of \emph{pairs} of sensors in an optimal tree.

\section{Step B: Shortening too long sensor paths}\label{sec:B}

Recall from Definition \ref{defn:sensorpaths} that sensor paths are strong (respectively, weak) if their length is at most $k+1$ (respectively, at least $k+2$). Our next step will be to show that weak sensor paths can be shortened as long as all the strong sensor paths are disjoint.
First we state the conditions under which the next transformation applies. The idea is that after the repetitive execution of transformation A in Lemma \ref{lem:trA}, either these conditions are satisfied or we may directly jump to Transformation C in Section \ref{sec:C} below.  
\begin{cond}\label{cond:trB}
Let $T=(V,E)$ be a tree on $n$ vertices with a threshold-$k$ resolving set $S$. Suppose that the following hold:
\begin{itemize}
\item[(i)]
for all sensors $s\in S$ the attraction $A_T(s)$ is contained in a single leaf-path starting from $s$,
\item[(ii)]
any pair of strong sensor paths are disjoint, possibly except for their endpoints, and
\item[(iii)]
there is at least one weak sensor path in $T$, and  $\cP_T(s_0,s_1)$ is one of the longest ones.
\end{itemize}
\end{cond}

\begin{remark}\label{rem:fourpaths}\normalfont
Condition (ii) above is equivalent to the following: there are no two strong sensor paths in $T$ that share an edge. This can be seen as follows. Assume $\cP_T(s_1,s_2)$ and $\cP_T(s_3,s_4)$ are two distinct strong sensor paths that do not share an edge, but do share a vertex $v$ that is not the endpoint of either of them. If the $s_i$-s are not all different, say $s_4=s_2$, then let $w$ be the vertex neighboring $s_2$ on the path $\cP_T(s_1,s_2)$. Then $\cP_T(s_1,s_2)$ and $\cP_T(s_2,s_3)$ share the edge $\{w,s_2\}$, a contradiction. Hence, $s_1,s_2,s_3,s_4$ are indeed all different. Let $s'_1, s'_2, s'_3, s'_4$ be a relabeling of $s_1,s_2,s_3,s_4$ such that $s'_i$ ($i=1,2,3,4$), is the $i$-th closest sensor to $v$ among $s_1,s_2,s_3,s_4$, breaking ties arbitrarily. Let $u$ be the vertex neighboring $v$ on $\cP_T(s'_1,v)$. Then $\cP_T(s'_1,s'_2)$ and $\cP_T(s'_1,s'_3)$ share the edge $\{u,v\}$, and both are strong sensor paths, since
\begin{align*}
\max\{d_T(s'_1,s'_2),d_T(s'_1,s'_3)\}&=
\max\{d_T(s'_1,v)+d_T(v,s'_2),d_T(s'_1,v)+d_T(v,s'_3)\}\\
&\le \max\{d_T(s_1,v)+d_T(v,s_2),d_T(s_3,v)+d_T(v,s_4)\}\\
&=\max\{d_T(s_1,s_2),d_T(s_3,s_4)\}\le k+1.
\end{align*}
Hence, these two properties are indeed equivalent, and we will use both formulations interchangeably later in this paper.
\end{remark}
The main lemma of this section is the following:
\begin{lemma}\label{lem:trB}
Let $T=(V,E)$ be a tree with a threshold-$k$ resolving set $S$. Suppose that Condition \ref{cond:trB}(i)--(iii) hold for $T$. 
Then there is another tree $\widehat T=(V,\widehat E)$ on the same vertex set such that the following hold:
\begin{itemize}
\item[(i)] $S$ is still a threshold-$k$ resolving set in $\widehat T$,
\item[(ii)] for each $s\in S$, $A_{\widehat T}(s)=A_T(s)$, and its vertices still form a leaf-path in $\widehat T$ emanating from $s$, and
\item[(iii)] there is at least one pair of strong sensor paths in $\widehat T$ that share an edge.
\end{itemize}

\end{lemma}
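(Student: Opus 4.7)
The plan is to define a rewiring operation, Transformation B, which takes a tree $T$ satisfying Condition \ref{cond:trB} and shortens the weak sensor path $\cP_T(s_0,s_1)$ by at least one edge while preserving both the vertex set and the threshold-$k$ resolving property of $S$. Iterating Transformation B, possibly interleaved with Lemma \ref{lem:trA} to keep single-sensor attractions on leaf-paths, will eventually produce a $\widehat{T}$ in which two strong sensor paths share an edge, giving the conclusion of the lemma.

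Concretely, enumerate the weak sensor path as $v_0=s_0,v_1,\ldots,v_L=s_1$ with $L\ge k+2$. By Condition \ref{cond:trB}(i), the leaf-paths $A_T(s_0)$ and $A_T(s_1)$ are disjoint from $\cP_T(s_0,s_1)$, so no interior vertex $v_j$ can belong to $A_T(s_0)$ or $A_T(s_1)$ alone: it must either lie in $A_T(s_0,s_1)$, or else be measured by some sensor reached through a hanging subtree attached to $\cP_T(s_0,s_1)$. I would then choose an interior edge $\{v_{j-1},v_j\}$ at a carefully selected position $j$ (so that $v_j$'s hanging subtree can be safely relocated without conflict), delete this edge together with $\{v_j,v_{j+1}\}$, add the bypass edge $\{v_{j-1},v_{j+1}\}$, and reattach $v_j$ together with its hanging subtree elsewhere --- for instance as an extension of an existing leaf-path or onto a neighbouring branch. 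The resulting tree $T'=\tr_B(T,S,s_0,s_1)$ would then satisfy $d_{T'}(s_0,s_1)<d_T(s_0,s_1)$ while $s_0,s_1$ remain a neighbouring sensor pair.

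To verify that $T'$ meets conclusions (i) and (ii) of the lemma, I would prove that (a) $S$ is still a threshold-$k$ resolving set on $T'$, via a case analysis on the position of an arbitrary vertex pair $x,y$ relative to the rewired region, using Claim \ref{claim:type-difference} and the type/height machinery of Definition \ref{defn:type-height} to track which sensors continue to distinguish which pairs; and (b) for each sensor $s\in S$, $A_{T'}(s)=A_T(s)$ and still forms a leaf-path emanating from $s$, by applying Claim \ref{claim:measuredpaths} to rule out any newly created minimally-resolving configurations. If $T'$ still satisfies Condition \ref{cond:trB} in full, Transformation B is reapplied. Since the total length $\sum d_T(s,s')$ summed over weak sensor pairs is a positive integer that strictly decreases at each application, the iteration terminates in finitely many steps.

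The main obstacle will be twofold. First, the local shortening must be designed so that resolvability is \emph{provably} preserved: relocating $v_j$ to a poorly chosen destination can create indistinguishable pairs via type collisions with respect to other sensor pairs in the tree, so the rewiring rule must be phrased precisely and the accompanying case analysis will be intricate, analogous in spirit to the six-case proof of Lemma \ref{lem:trAprop}(i). Second, one must show that the terminal tree of the iteration violates Condition \ref{cond:trB}(ii) --- i.e., contains two strong sensor paths sharing an edge --- rather than merely having all sensor paths strong but still pairwise edge-disjoint. This will require a global structural argument, exploiting that the shortening routes $\cP_T(s_0,s_1)$ through a neighbourhood of $s_0$ or $s_1$ already traversed by some pre-existing strong sensor path, while Condition \ref{cond:trB}(i) forbids reattaching the relocated subtree into the leaf-path attractions, thereby forcing the desired edge overlap.
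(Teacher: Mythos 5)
Your high-level architecture matches the paper's: define a shortening operation, iterate it, terminate via a strictly decreasing integer, and argue the terminal tree has two overlapping strong sensor paths. But the two genuinely hard steps are left as placeholders, and the one concrete rewiring rule you do sketch is problematic. Deleting $\{v_{j-1},v_j\}$ and $\{v_j,v_{j+1}\}$ and adding the bypass $\{v_{j-1},v_{j+1}\}$ shifts by exactly one the distance from $s_0$ (and from every sensor on the $s_0$-side) to \emph{every} vertex beyond $v_j$; this generically creates collisions between vertices that were previously distinguished only by a unit distance difference, and "reattach $v_j$ and its hanging subtree elsewhere, e.g.\ onto a neighbouring branch" is exactly the step where all the difficulty lives. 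The paper's Transformation B is driven by structure you never invoke: by Condition \ref{cond:trB}(i)--(ii) the neighbour $w_1$ of $s_0$ on the weak path lies in $A_T(s_0,s_0')$ for a \emph{unique} second sensor $s_0'$, and the cut is made at the last common vertex $w_q$ of $\cP_T(s_0,s_1)$ and $\cP_T(s_0,s_0')$. The type-$q$ vertices are relocated into a leaf-path at $w_q$ (where Claim \ref{claim:typq} bounds their number by $k-q$ so both $s_0$ and $s_0'$ still measure them, resolved by height), and every other detached component $\wT_i$ is joined to $s_0$ by a single edge, which preserves distance \emph{differences} from $s_0$ to vertices within each component. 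Claim \ref{claim:loc}, which constrains where all other sensors can sit relative to $s_0,s_0'$, is what makes the case analysis close; nothing in your proposal plays this role.

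The second gap is conclusion (iii). You correctly identify that it is not enough for the iteration to end with all sensor paths strong and pairwise disjoint, and you gesture at "a global structural argument," but you do not supply it. The paper's argument (Lemma \ref{lem:trBprop}(v)) is short but specific: the vertex $z$ adjacent to $s_1$ on the weak path is not measured by $s_0$ (the path is weak) nor by $s_0'$ (else $\cP_T(s_0',s_1)$ would already be a strong sensor path sharing an edge with $\cP_T(s_0,s_0')$, contradicting Condition \ref{cond:trB}(ii)), and $z\notin A_T(s_1)$ by Condition \ref{cond:trB}(i); hence some third sensor $s_2$ directly measures $z$, $\cP_T(s_2,s_1)$ is a pre-existing strong sensor path untouched by the transformation, and the moment $\cP(s_0,s_1)$ becomes strong it shares an edge with $\cP(s_2,s_1)$. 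Without this (or an equivalent) argument, your iteration could in principle terminate in a tree satisfying Condition \ref{cond:trB}(i)--(ii) with no weak sensor paths at all, which does not prove the lemma. Finally, note that interleaving Transformation A inside the iteration, as you suggest, would force you to re-verify Conditions \ref{cond:trB}(ii)--(iii) after each such interleave; the paper avoids this by proving that Transformation B itself preserves the single-sensor leaf-path attractions exactly (Lemma \ref{lem:trBprop}(ii)).
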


The proof of Lemma \ref{lem:trB}  relies on another operation on the graph, given below, which we call {\it Transformation B}.
Before the definition, recall Definition \ref{defn:attraction} and Remark \ref{rem:two-attr} about the structure of the attraction of two sensors, as well as Definition \ref{defn:type-height} about the types and heights of a vertex $x$ with respect to two sensors $s,s'$. With regard to this, we make some comments next.

Note that $(d_T(x,s),d_T(x,s'))$ is a one-to-one function of $(\mathrm{typ}_{s,s'}(x),\mathrm{hgt}_{s,s'}(x))$, implying that there cannot be two vertices $x,y\in A_T(s,s')$ with $\mathrm{typ}_{s,s'}(x)=\mathrm{typ}_{s,s'}(y)$ and $\mathrm{hgt}_{s,s'}(x)=\mathrm{hgt}_{s,s'}(y)$, since then no sensor would resolve them.
For any $1\le j\le d(s,s')-1$ there can be at most $k-\max\{j,d(s,s')-j\}+1$ vertices in $A_T(s,s')$ with type $j$ as the possible pairs of distances of a type $j$ vertex from $s$ and $s'$, respectively, are $(j,d(s,s')-j),(j+1,d(s,s')-j+1),\ldots$ with the pair of largest distances being either $(k,d(s,s')+k-2j)$ or $(2j+k-d(s,s'),k)$, depending on whether $j\ge (d(s,s'))/2$ or not (these pairs of distances correspond to heights $0,1,\ldots,k-\max\{j,d(s,s')-j\}$). Observe that it could happen that a type-$j$ vertex is not in $A_T(s, s')$ for some $j \in \{1, \dots, d(s, s')-1\}$ even if its height is at most $k-\max\{j,d(s,s')-j\}$, namely, when that vertex is directly measured by a third sensor $s''$.

\begin{defn}[Transformation B]\label{defn:trB}
Let $T=(V,E)$ be a tree with a threshold-$k$ resolving set $S\subseteq V$ and $s_0, s_1\in S$ such that Conditions \ref{cond:trB}(i)--(iii) hold. Let $w_1\in V(\cP_T(s_0,s_1))$ be the vertex for which $\{s_0,w_1\}\in E$. Since $w_1\not\in A_T(s)$ and Condition \ref{cond:trB}(ii) holds, there is a unique other sensor $s'_0\neq s_1$ that directly measures $w_1$. Let $w_1,w_2,\ldots,w_{d_T(s_0,s'_0)-1}$ be the vertices in $V(\cP_T(s_0,s'_0))\setminus\{s_0,s'_0\}$ in order of increasing distance from $s_0$, and let
\[
V(\cP_T(s_0,s_1))\cap V(\cP_T(s_0,s'_0))=\{w_1,w_2,\ldots,w_q\}.
\]
Furthermore, let $u_{q+1}$ be the vertex of $\cP_T(w_q,s_1)$ for which $\{w_q,u_{q+1}\}\in E(\cP_T(w_q,s_1))$.
Now define the vertex sets
\begin{align*}
V_1&:=\{v\in A_T(s_0,s'_0):\ \typ_{s_0,s'_0}(v)=q,\  \hgt_{s_0,s'_0}(v)\ge 1,\ u_{q+1}\in V(\cP_T(w_q,v))\},\\
V_2&:=\{v\in V:\ \typ_{s_0,s'_0}(v)=q,\ \hgt_{s_0,s'_0}(v)\ge 1,\ u_{q+1}\notin V(\cP_T(w_q,v))\}
\end{align*}
with $V_1=:\{v^{(1)},v^{(2)},\ldots,v^{(|V_1|)}\}$ and $V_2=:\{v^{(|V_1|+1)},v^{(|V_1|+2)},\ldots,v^{(|V_1|+|V_2|)}\}$, and the edge sets
\begin{align*}
E_1&:=\{\{u,v\}\in E: u\in V_1\cup V_2\text{ or }v\in V_1\cup V_2\}\cup\{w_q,u_{q+1}\},\\
E_2&:=\{\{w_q,v^{(1)}\}\}\bigcup\big(\cup_{i=1}^{|V_1|+|V_2|-1}\{\{v^{(i)},v^{(i+1)}\}\}\big).
\end{align*}
Consider the subgraph $\wT=(V\setminus (V_1\cup V_2),E\setminus E_1)$, and denote its connected components by $\widetilde T_0,\widetilde T_1,\ldots,\widetilde T_r$ where $\widetilde T_0$ contains $s_0$ (and $s'_0$), and $\wT_1$ contains $s_1$. For each $1\le i\le r$ let $x_i$ be the vertex in $\widetilde T_i$ that is closest to $s_0$ in $T$. 
Define the third edge set
\begin{align*}
E_3:=&\{\{s_0,x_i\}:1\le i\le r\}.
\end{align*}
Finally, define $\tr_B(T,S,s_0,s_1)=(V,E')$ where $E'=(E\setminus E_1)\cup E_2\cup E_3$.
\end{defn}

For an example of Transformation B see Figure \ref{fig:trB}.

\begin{figure}[ht]
    \centering
    \begin{subfigure}[b]{0.49\textwidth}
    \centering
    \includegraphics[width=\textwidth]{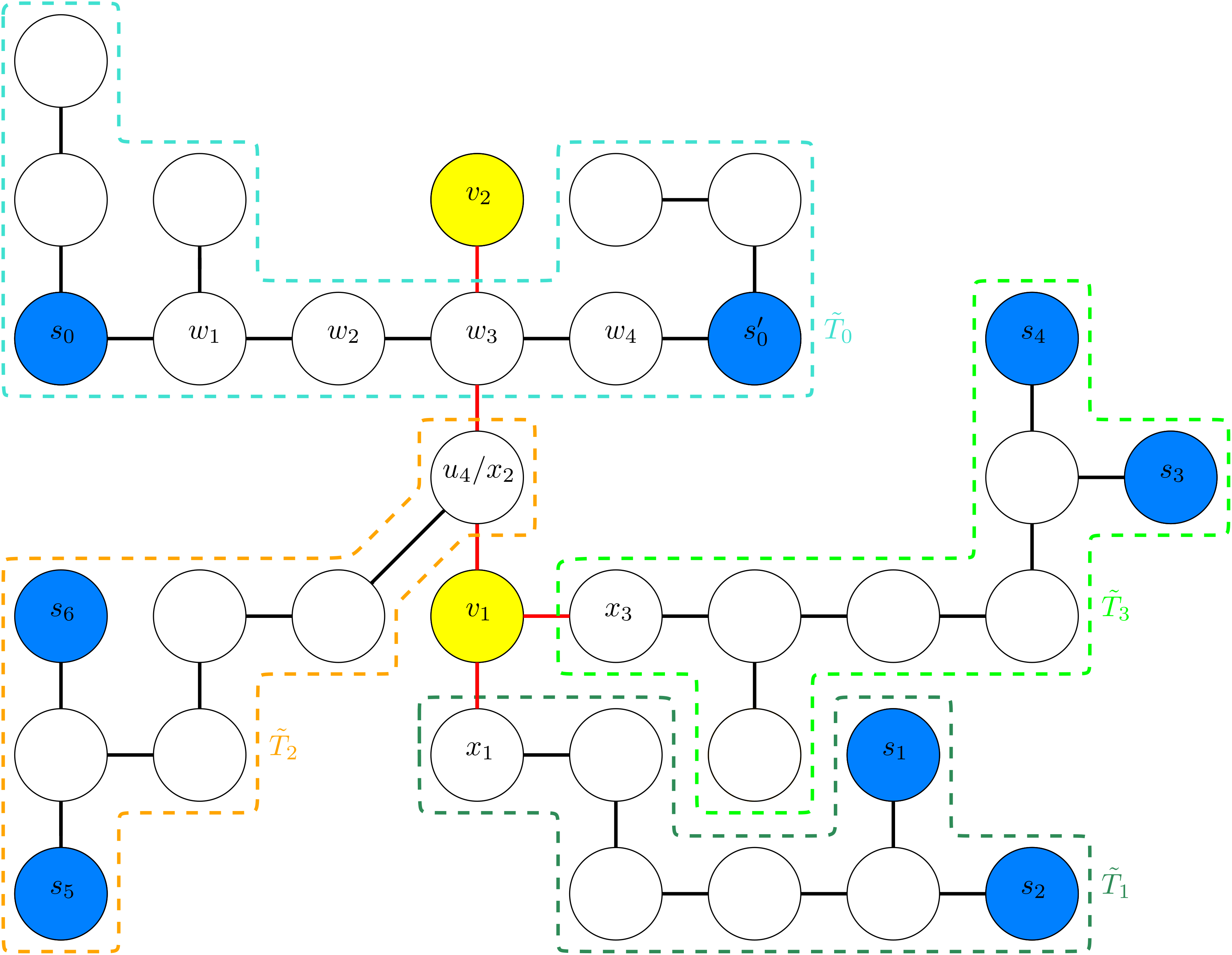}
    \end{subfigure}
    \hfill
    \begin{subfigure}[b]{0.49\textwidth}
    \centering
    \includegraphics[width=\textwidth]{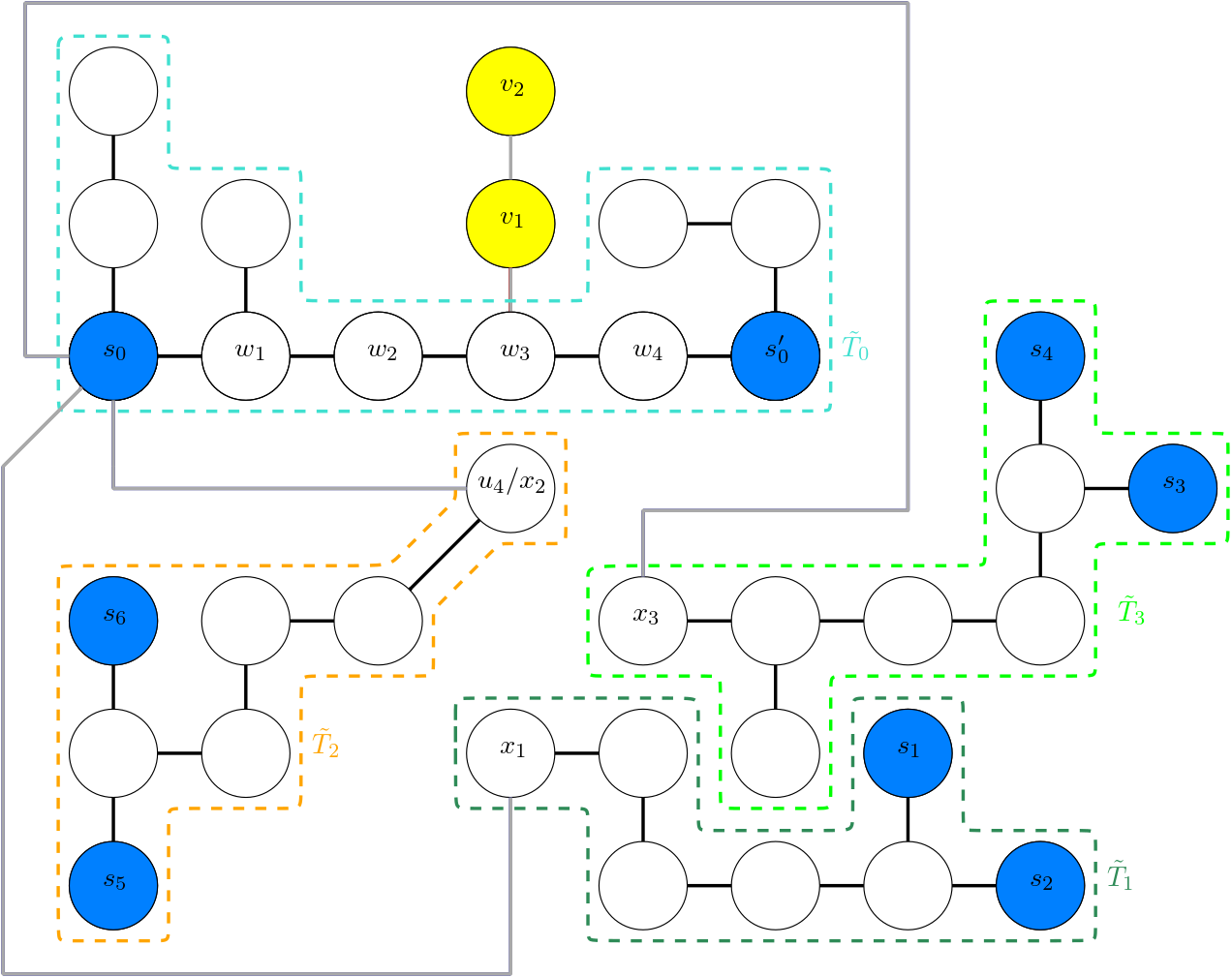}
    \end{subfigure}
    \caption{An example of transformation B with $T$ on the left and $T'=\tr_B(T, S, s_0, s'_0)$ on the right. Here $k=5$, and $q=3$. The blue vertices are the sensors in $S$, and the yellow vertices are in $V_1\cup V_2$: $v_1\in V_1, v_2\in V_2$. The red edges belong to $E_1$, and are deleted by the transformation. The grey edges belong to $E_2\cup E_3$, and are added by the transformation. The subtrees $\wT_0,\wT_1,\wT_2$ and $\wT_3$ are also highlighted. Note that $x_3$ and the leaf vertex at distance two from $x_3$ within $\widetilde T_3$ are only resolved by $s_0'$ in $T$, and by $s_0$ in $T'$. While the sensors $s_0, s_0'$ may measure vertices in the subtrees $\widetilde T_1, \widetilde T_2, \widetilde T_3$, any sensor in $\widetilde T_i, i\ge 1$ can measure vertices only within its own subtree and possibly in $\widetilde T_0$ (this latter case is not depicted in the picture, but could happen e.g. if the sensors $s_5, s_6$ were closer to $u_4$).}
    \label{fig:trB}
\end{figure}

A couple of comments on this definition:
$s_0'$ cannot be equal to $s_1$, since $d(s_0, s_1)\ge k+2$ by the assumption that $\cP_T(s_0, s_1)$ is a weak sensor path. The sensor $s_0'$ is indeed unique, since if there was another sensor $s_0''$ also directly measuring $w_1$ in $T$, then the two sensor paths $\cP_T(s_0, s_0'), \cP_T(s_0, s_0'')$ would both be strong and violate Condition \ref{cond:trB}(ii). This uniqueness of the sensor $s_0'$ means that $w_1\in A_T(s_0,s_0')$. A similar argument shows that $w_{d_T(s_0,s'_0)-1}$ is also in $A_T(s_0,s_0')$.

$E_1$ is the set of edges that are adjacent to the vertices in $V_1\cup V_2$ in $T$, plus the edge $\{w_q,u_{q+1}\}$ (in case $u_{q+1}\notin V_1$, that is, $u_{q+1}\notin A_T(s_0,s'_0)$). The point of removing $E_1$ from the graph is to rewire the edges (with the addition of $E_2$ and $E_3$) such that the path between $s_0$ and $s_1$ becomes shorter while ensuring that the vertices of $A_T(s_0,s'_0)$ are still identified by the sensors. The removal of $E_1$, and then the addition of $E_2$ rewires $A_T(s_0, s_0')$ as follows: those vertices in $A_T(s_0,s'_0)$ that are not of type $q$ stay 'at their place' (relative to $s_0$ and $s'_0$). Those that are type $q$, are rewired into a single leaf-path emanating from $w_q$. We will show that $V_2$ must be already a leaf-path in $T$, and thus we essentially append $V_1$ to the end of this path.
We will show in Claim \ref{claim:typq} that this leaf-path contains at most $k-q$ vertices besides $w_q$, which ensures that all of them will be measured by both $s_0$ and $s'_0$ after the transformation. After all this, $E_3$ connects the components of $(V,(E\setminus E_1)\cup E_2)$ back together, by connecting $s_0$ to the (originally) closest vertex $x_i$ in each of the other components $\widetilde T_i$. 
Note that the vertices $x_i$ in the above definition are indeed well-defined, since if some $\widetilde T_i$ had two closest vertices to $s_0$ in $T$, then they would lie on a cycle in $T$, contradicting the tree property, similarly as before for Transformation A.

We observe that $\tr_B(T,S,s_0,s_1)$ is indeed a tree, i.e., connected, since the addition of the edge set $E_3$ to $\widetilde{T}$ adds exactly one connection between the components $\widetilde{T}_0$ and $\widetilde{T}_i$ for each $i=1,2,\ldots,r$, and the addition of $E_2$ adds the vertices of $V_1\cup V_2$ to $\wT_0$ as a single leaf-path. See Figure \ref{fig:trB} for an illustration.

\subsection{Properties of Transformation B and their consequences}\label{sec:B-properties}
\begin{lemma}[Properties of Transformation B]\label{lem:trBprop}
Let $T=(V,E)$ be a tree with a threshold-$k$ resolving set $S\subseteq V$ and $s_0, s_1\in S$, for which Condition \ref{cond:trB}(i)--(iii) hold,  and consider $T':=\tr_B(T, S, s_0, s_1)$. Then the following hold:
\begin{itemize}
\item[(i)]
$S$ remains a threshold-$k$ resolving set for $T'$,
\item[(ii)]
for each $s\in S$, $A_{T'}(s)=A_T(s)$, and its vertices still form a leaf-path in $T'$ emanating from $s$,
\item[(iii)]
for each pair of sensors $s,s'\in S$, if $\cP_{T'}(s,s')$ is a sensor path, then $\cP_T(s,s')$ was also a sensor path (in $T$), and $d_{T'}(s,s')\le d_T(s,s')$,
\item[(iv)]
$\cP_{T'}(s_0,s_1)$ is still a sensor path, and is strictly shorter than $\cP_{T}(s_0,s_1)$,
\item[(v)]
if $\cP_{T'}(s_0,s_1)$ is a strong sensor path, then $T'$ has a pair of strong sensor paths that share an edge.
\end{itemize} 
\end{lemma}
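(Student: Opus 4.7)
My plan is to mirror the structure of the proof of Lemma~\ref{lem:trAprop}. Before tackling the parts directly, in Section~\ref{sec:B-prelim} I would establish preliminary facts. First, an analogue of Claim~\ref{claim:nocomm}: sensors in $\widetilde T_i$ with $i\ge 1$ cannot directly measure any vertex outside $\widetilde T_i$, and a sensor in $\widetilde T_0\setminus\{s_0,s_0'\}$ can reach $V_1\cup V_2$ only through a path containing $s_0$ or $s_0'$. This uses Condition~\ref{cond:trB}(ii) to rule out extra short routes through $w_1,\dots,w_q$. Second, a bookkeeping claim (call it Claim~\ref{claim:typq}) asserting that the number of type-$q$ vertices in $A_T(s_0,s_0')$ is at most $k-q$; this ensures the leaf-path built by $E_2$ has length at most $k-q$, so every vertex on it is measured by both $s_0$ and $s_0'$ in $T'$.

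For part~(iv), the unique path from $s_0$ to $s_1$ in $T'$ consists of the new edge $\{s_0,u_{q+1}\}$ followed by the preserved segment $\cP_T(u_{q+1},s_1)\subseteq\widetilde T_1$, giving $d_{T'}(s_0,s_1)=d_T(s_0,s_1)-q<d_T(s_0,s_1)$ since $q\ge 1$; it remains a sensor path because $\cP_T(s_0,s_1)$ was one. Part~(iii) follows by a case split: a sensor path in $T'$ uses either only edges of $E\setminus E_1$ (equal length as in $T$) or an edge of $E_2\cup E_3$ (strictly shorter than in $T$), and in each case I would check no new sensor is placed on the path. For part~(ii), the leaf-path $A_T(s)\cup\{s\}$ given by Condition~\ref{cond:trB}(i) is disjoint from $V_1\cup V_2$: a type-$0$ argument handles $s\in\{s_0,s_0'\}$, while for $s\notin\{s_0,s_0'\}$ the leaf-path lies either inside a component $\widetilde T_i$ or in a branch of $\widetilde T_0$ untouched by the transformation. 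Hence these leaf-paths survive in $T'$, and the no-communication claim forbids any new vertex from entering $A_{T'}(s)$.

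Part~(i) is a case analysis on the locations of a pair $x,y\in V$ among the components $\widetilde T_0,\widetilde T_1,\dots,\widetilde T_r$ and $V_1\cup V_2$, following the six-case template of Lemma~\ref{lem:trAprop}(i). In each case I would exhibit a resolving sensor using (ii), (iii), the no-communication claim, and Claim~\ref{claim:type-difference} whenever the pair straddles the $s_0$- and $s_0'$-sides of $\cP_T(s_0,s_0')$. The main obstacle is part~(v). Here the hypothesis that $\cP_{T'}(s_0,s_1)$ is strong translates to $d_T(s_0,s_1)-q\le k+1$, i.e.\ $s_1$ directly measures $u_{q+1}$ in $T$. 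Since $\cP_{T'}(s_0,s_0')=\cP_T(s_0,s_0')$ is also strong, I would produce a second strong sensor path in $T'$ sharing an edge with one of $\cP_{T'}(s_0,s_1)$ or $\cP_{T'}(s_0,s_0')$. Candidates are (a) a sensor $s^\star\in\widetilde T_1$ lying on a branch off the interior of $\cP_T(u_{q+1},s_1)$, giving a sensor path $\cP_{T'}(s_0,s^\star)$ that shares $\{s_0,u_{q+1}\}$ with $\cP_{T'}(s_0,s_1)$, or (b) a sensor inside $V_2$, which after rewiring sits on the $w_q$-leaf-path and whose sensor path from $s_0$ in $T'$ shares the prefix $(s_0,w_1,\dots,w_q)$ with $\cP_{T'}(s_0,s_0')$. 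The subtle point is to argue that at least one such candidate must exist, which I expect to require combining the weakness of $\cP_T(s_0,s_1)$ with Remark~\ref{rem:fourpaths}.
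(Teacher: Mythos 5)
Your overall architecture matches the paper's (a no-communication claim, a bound on the rewired leaf-path, then a case analysis on the locations of $x,y$), but two steps would fail as proposed. First, your bookkeeping claim bounds only the type-$q$ vertices of $A_T(s_0,s_0')$, i.e.\ essentially the set $V_1$; the leaf-path built by $E_2$ carries $V_1\cup V_2$, and $V_2$ is by definition \emph{not} contained in $A_T(s_0,s_0')$ (it consists of type-$q$ vertices whose path from $w_q$ avoids $u_{q+1}$, possibly directly measured by third sensors). To get $|V_1\cup V_2|\le k-q$ one must show that \emph{all} vertices of $V_1\cup V_2$ have pairwise distinct heights, each at most $k-q$, and the cross-comparison between a $V_1$-vertex and a $V_2$-vertex needs a sensor-location lemma absent from your outline: using Condition \ref{cond:trB}(ii)--(iii), every sensor has type $0$, type $d_T(s_0,s_0')$, or type $q$ with its path from $w_q$ passing through $u_{q+1}$, and any sensor directly measuring a $V_2$-vertex is strictly farther from $w_q$ than both $s_0$ and $s_0'$. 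That lemma is also what makes the hard subcase of part (i) (a sensor in some $\widetilde T_i$, $i\ge 1$, resolving two vertices of $\widetilde T_0$) and the "untouched branch" step of part (ii) go through, so it is not optional.

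Second, and more seriously, part (v) is exactly the step you leave open, and neither of your candidates works: there are no sensors in $V_2$ (by the location lemma above), so candidate (b) is vacuous, and a sensor $s^\star$ branching off the interior of $\cP_T(u_{q+1},s_1)$ gives no control on $d_{T'}(s_0,s^\star)$, so $\cP_{T'}(s_0,s^\star)$ need not be strong. The argument that works looks at $z$, the neighbor of $s_1$ on $\cP_T(s_0,s_1)$: weakness of $\cP_T(s_0,s_1)$ means $s_0$ cannot measure $z$ in $T$; Condition \ref{cond:trB}(ii) rules out $s_0'$ measuring $z$ (else $\cP_T(s_0',s_1)$ would be a strong sensor path sharing an edge with $\cP_T(s_0,s_0')$); and Condition \ref{cond:trB}(i) gives $z\notin A_T(s_1)$, since $z$ lies on a sensor path rather than on the leaf-path at $s_1$. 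Hence some third sensor $s_2$ directly measures $z$, the path $\cP_T(s_2,s_1)$ is a strong sensor path lying entirely in $\widetilde T_1$, it survives unchanged in $T'$, and it shares the edge $\{z,s_1\}$ with $\cP_{T'}(s_0,s_1)$. (A smaller point: the new edge of $E_3$ joining $\widetilde T_1$ to $s_0$ is $\{s_0,x_1\}$, where $x_1$ is the first vertex of $\cP_T(s_0,s_1)$ beyond $w_q$ that is not in $V_1$, so $x_1$ need not equal $u_{q+1}$ and the shortening in (iv) is at least, not exactly, $q$; the conclusion of (iv) is unaffected.)
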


\begin{proof}[Proof of Lemma \ref{lem:trB} subject to Lemma \ref{lem:trBprop}]
Let $T_0=T$, and then let us iteratively define $T_i:=\tr_B\big(T_{i-1},S,s_0^{(i-1)},s_1^{(i-1)}\big)$ for $i\ge 1$, as long as $T_{i-1}$ does not have a pair of strong sensor paths that share an edge, and where $s_0^{(i-1)}$, $s_1^{(i-1)}$ are the endpoints of one of the longest weak sensor paths in $T_{i-1}$. Let $\widehat T=T_{i_{\max}}$ where $i_{\max}$ is the first index $i$ in this procedure for which there is a pair strong sensor paths in $T_i$ that share an edge. We prove that this procedure is well-defined, and $\widehat T=T_{i_{\max}}$ satisfies the conditions of the Lemma.

For an inductive proof, assume that $S$ is a threshold-resolving set in $T_{i-1}$, $1\le i\le i_{\max}$, and Condition \ref{cond:trB}(i)--(iii) all hold for $T_{i-1}$. (These indeed hold for $i=1$.) Then $T_i:=\tr_B\big(T_{i-1},S,s_0^{(i-1)},s_1^{(i-1)}\big)$ is well-defined.

By Lemma \ref{lem:trBprop}(i), if $S$ was a threshold-$k$ resolving set in $T_{i-1}$, then it will remain so in $T_i$.
By Lemma \ref{lem:trBprop}(ii), if Condition \ref{cond:trB}(i) held for $T_{i-1}$, then it will also hold in $T_i$. Furthermore, for every sensor $s\in S$, $A_{T_i}(s)=A_{T_{i-1}}(s)$, and its vertices still form a leaf-path emanating from $s$ in $T_i$.
Condition \ref{cond:trB}(ii) holds for $T_i$ by assumption when $i\le i_{\max}-1$.
By Lemma \ref{lem:trBprop}(iv)--(v) either $\cP_{T_i}(s_0^{(i-1)},s_1^{(i-1)})$ is still a weak sensor path in $T_i$, and then Condition \ref{cond:trB}(iii) holds for $T_i$, or $\cP_{T_i}(s_0^{(i-1)},s_1^{(i-1)})$ is a strong sensor path in $T_i$, and then $T_i$ has a pair of strong sensor paths sharing an edge, meaning $i=i_{\max}$.
This finishes the proof that $T_{i}:=\tr_B\big(T_i,S,s_0^{(i-1)},s_1^{(i-1)}\big)$ is indeed well-defined for $i\le i_{\max}$, and inductively shows that parts (i), (ii) of Lemma \ref{lem:trB} hold for $T_i$ for any $i\le i_{\max}$. We are only left to show that the procedure finishes in finitely many steps, that is, $i_{\max}<\infty$. In that case, by assumption part (iii) of Lemma \ref{lem:trB} also holds for $\widehat T=T_{i_{\max}}$.

By Lemma \ref{lem:trBprop}(iii), for $i\le i_{\max}$, each sensor path in $T_{i-1}$ either stops being a sensor path in $T_i$, or otherwise its length does not increase. Also, new sensor paths cannot emerge in $T_i$ compared to $T_{i-1}$. Moreover, by part (iv) of the same lemma, the length of at least one sensor path strictly decreases from $T_{i-1}$ to $T_i$. Therefore, if $\Sigma_i$ is the sum of the lengths of all sensors paths in $T_i$, then $\Sigma_{i}\le\Sigma_{i-1}-1$ for every $i\le i_{\max}$. But $\Sigma_i\ge 0$ has to hold for every $i$, which implies that $i_{\max}<\infty$, finishing the proof.
\end{proof}

\subsection{Preliminaries to treating Transformation B}\label{sec:B-prelim}
Similarly to transformation A, we start by proving a 'no communication' lemma for the graph components in $\tr_B$.
\begin{claim}[No `communication' between different subtrees]\label{claim:nocomm-B} Consider the notation of Definition \ref{defn:trB}, and let $T':=\tr_B(T, S, s_0, s_1)$. 
\begin{itemize}
    \item[(i)] Let $s^\star$ be any sensor in $\widetilde T_i$ for some $i\ge 1$. Then, for all vertices $y\in \widetilde T_j$, $j\ge 1$, $j\ne i$, we have that $d_T(s^\star, y)\ge k+1$, and $s_0\in V(\cP_{T'}(s^\star,y))$. 
    \item[(ii)] Let $s^\star\notin \{s_0,s'_0\}$ be a sensor in $\wT_0$. Then, for any $y\in V\setminus\wT_0$ the path $\cP_T(s^\star,y)$ contains $s_0$ or $s'_0$.
\end{itemize}
\end{claim}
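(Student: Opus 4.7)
My plan rests on two structural observations about the components of $\widetilde T$ and the possible sensor locations inside $\widetilde T_0$. First, I observe \textbf{(A)}: each component $\widetilde T_i$ with $i \ge 1$ lies inside the subtree of $T$ hanging off $w_q$ via $u_{q+1}$. Indeed, any branch off $w_q$ that heads neither toward $s_0$, toward $s_0'$, nor toward $u_{q+1}$ consists entirely of vertices of type $q$ with positive height, all of which belong to $V_2$ and are removed; branches off an interior $w_j$ with $j \ne q$ have type $j \ne q$ and are therefore not in $V_1 \cup V_2$, surviving inside $\widetilde T_0$. Thus a vertex can lie in some $\widetilde T_i$ ($i \ge 1$) only by sitting inside the $u_{q+1}$-subtree and being disconnected from $\widetilde T_0$ either by removal of the edge $\{w_q, u_{q+1}\}$ or by removal of $V_1$-vertices.

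Next I would establish \textbf{(B)}: every sensor $s^\star \in \widetilde T_0 \setminus \{s_0, s_0'\}$ lies in a subtree of $T$ branching off $\cP_T(s_0, s_0')$ at the endpoint $s_0$ or $s_0'$. By (A), $s^\star$ is neither in the $u_{q+1}$-subtree nor in a non-$u_{q+1}$ branch off $w_q$; interior vertices of $\cP_T(s_0, s_0')$ are not sensors. To rule out the remaining case that some sensor exists in a branch off an interior $w_j$ with $1 \le j \le q-1$ (symmetrically $q+1 \le j \le d-1$, where $d := d_T(s_0, s_0') \le k+1$), take the sensor $\hat s$ closest to $w_j$ in that branch: the three sensor paths $\cP_T(\hat s, s_0)$, $\cP_T(\hat s, s_0')$, and $\cP_T(\hat s, s_1)$ all share edges with the strong sensor path $\cP_T(s_0, s_0')$, so Condition \ref{cond:trB}(ii) together with Remark \ref{rem:fourpaths} forces each of them to be weak; Condition \ref{cond:trB}(iii) then bounds $|\cP_T(\hat s, s_1)| \le |\cP_T(s_0, s_1)|$. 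Setting $h := d_T(\hat s, w_j)$, the weakness of $\cP_T(\hat s, s_0')$ combined with $d \le k+1$ gives $h \ge j+1$, whereas the longest-weak bound gives $h \le j$, a contradiction.

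Given (A) and (B), part (i) is quick. Both $s^\star$ and $y$ lie in the $u_{q+1}$-subtree by (A), so $\cP_T(s^\star, y)$ stays inside it and avoids $\{s_0, s_0'\}$; since $\widetilde T_i \ne \widetilde T_j$ are distinct components of $\widetilde T$, the path contains some $v \in V_1 \subseteq A_T(s_0, s_0')$, and if $d_T(s^\star, y) \le k$, then $s^\star$ measures $v$ along a subpath that misses $\{s_0, s_0'\}$, contradicting Claim \ref{claim:measuredpaths}. For the $T'$ clause, in $T'$ each external $\widetilde T_i$ is attached to the rest only through the single edge $\{s_0, x_i\} \in E_3$, so any $T'$-path between two distinct external components passes through $s_0$. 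For part (ii), (B) places $s^\star$ inside a subtree branching at $s_0$ or $s_0'$; any $y \notin \widetilde T_0$ lies outside this subtree, so $\cP_T(s^\star, y)$ must exit through the attaching endpoint, which is $s_0$ or $s_0'$.

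The hard part is Observation (B); the arithmetic itself is short but needs all three parts of Condition \ref{cond:trB} in tandem, and crucially the longest-weak-sensor-path maximality from (iii). A minor bookkeeping matter is the degenerate case $u_{q+1} \in V_1$, which merely replaces "removal of the edge $\{w_q, u_{q+1}\}$" by "removal of the vertex $u_{q+1}$" without affecting the argument.
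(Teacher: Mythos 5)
Your proposal is correct, and it reaches both parts by a route that is recognizably different from the paper's in its key sub-steps. For part (ii), the paper does not argue directly: it first proves a separate location-of-sensors statement (Claim \ref{claim:loc}), whose proof picks a minimal-height sensor of forbidden type, uses that such a sensor cannot measure $w_1$ or $w_{d_T(s_0,s_0')-1}\in A_T(s_0,s_0')$ to deduce $d_T(s^\star,w_q)>d_T(s_0,w_q)$, and then contradicts Condition \ref{cond:trB}(iii). Your observation (B) re-derives exactly the needed special case of that claim, but via the length arithmetic of the three sensor paths out of $\hat s$: Condition \ref{cond:trB}(ii) forces $\cP_T(\hat s,s_0')$ and $\cP_T(\hat s,s_1)$ to be weak, giving $h\ge j+1$, while the maximality in Condition \ref{cond:trB}(iii) gives $h\le q-|j-q|\le j$. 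Both arguments lean on the same two conditions, but yours replaces the "cannot measure $w_{\pm1}$" mechanism by a purely metric one; I checked the asymmetric case $q+1\le j\le d-1$ (where the Condition (iii) bound is $h\le 2q-j$ rather than $h\le j$) and the contradiction with $h\ge j+1$ still holds there since $j>q$. For part (i), the paper works through the attachment vertex $x_i$ and its $V_1$-neighbor to bound $d_T(s^\star,x_i)\ge k$ and then $d_{T'}(s^\star,y)\ge k+1$; you instead locate a $V_1$-vertex directly on $\cP_T(s^\star,y)$ (using your structural observation (A), which the paper leaves implicit) and invoke Claim \ref{claim:measuredpaths}, which yields the $d_T$-bound stated in the claim more directly. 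The $T'$-clause is handled identically in both via the edge set $E_3$. What your route buys is a self-contained proof that does not need Claim \ref{claim:loc} as a black box and makes the component structure of $\wT$ explicit; what it costs is that Claim \ref{claim:loc} is still needed elsewhere (e.g.\ in the proof of Lemma \ref{lem:trBprop}(ii) and Claim \ref{claim:typq}), so in the context of the full paper the separate claim cannot be dispensed with.
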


In order to prove part of Claim \ref{claim:nocomm-B}, we will first prove the following structural property. Recall the type of vertices with respect to two sensors from Definition \ref{defn:type-height}.

\begin{claim}[Location of sensors]\label{claim:loc}
Consider the notation of Definition \ref{defn:trB}. Then in $T$ for any sensor $s^\star\in S$ one of the following three possibilities holds:
\begin{itemize}
\item[(i)] $\typ_{s_0,s'_0}(s^\star)=0$,
\item[(ii)] $\typ_{s_0,s'_0}(s^\star)=d_T(s_0,s'_0)$ or
\item[(iii)] $\typ_{s_0,s'_0}(s^\star)=q$ and $\{w_q,u_{q+1}\}\in E(\cP_T(s_0,s^\star))$.
\end{itemize}
\end{claim}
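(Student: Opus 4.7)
The plan is to classify each sensor $s^\star\in S$ by its type $j=\typ_{s_0,s_0'}(s^\star)\in\{0,1,\ldots,D\}$, where $D:=d_T(s_0,s_0')$, and rule out every possibility other than $j\in\{0,D\}$ or $j=q$ with $s^\star$ on the $u_{q+1}$-branch, by deriving in each forbidden case a contradiction with $D\le k+1$ using Conditions \ref{cond:trB}(ii) and \ref{cond:trB}(iii).

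The first step is a preliminary observation: $\cP_T(s_0,s_0')$ is itself a \emph{strong} sensor path. Indeed, $s_0'$ directly measures the neighbor $w_1$ of $s_0$, so $\cP_T(s_0',w_1)$ contains no interior sensor; adjoining the edge $\{s_0,w_1\}$ yields $\cP_T(s_0,s_0')$ as a sensor path of length $D=1+d_T(s_0',w_1)\le k+1$. As a consequence, $w_1,\ldots,w_{D-1}$ are not sensors, so the case of a sensor with type $j\in\{1,\ldots,D-1\}$ and height $0$ is immediately ruled out.

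For the main step I would fix a hypothetical sensor $s^\star$ with type $j\in\{1,\ldots,D-1\}$ and height at least $1$, and let $s^{\circ}$ denote the sensor closest to $w_j$ inside the branch of $T\setminus\{w_j\}$ containing $s^\star$ (possibly $s^{\circ}=s^\star$). Then $\cP_T(s_0,s^{\circ})$, $\cP_T(s_0',s^{\circ})$, and $\cP_T(s_1,s^{\circ})$ are all sensor paths, since the segments of $\cP_T(s_0,s_0')$ and $\cP_T(s_0,s_1)$ that they traverse are interiorly sensor-free. Both $\cP_T(s_0,s^{\circ})$ and $\cP_T(s_0',s^{\circ})$ share an edge with the strong path $\cP_T(s_0,s_0')$, so Condition \ref{cond:trB}(ii) forces each to be weak, giving the lower bounds
\begin{equation*}
d_T(w_j,s^{\circ})\ge k+2-j
\quad\text{and}\quad
d_T(w_j,s^{\circ})\ge k+2-(D-j).
\end{equation*}
The complementary upper bound on $d_T(w_j,s^{\circ})$ then splits into cases. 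If $j\ne q$, the path $\cP_T(s_1,s^{\circ})$ must cross $w_q$ and then run along $\cP_T(s_0,s_0')$ between $w_q$ and $w_j$, hence also shares an edge with $\cP_T(s_0,s_0')$; Condition \ref{cond:trB}(ii) makes it weak, and maximality of $\cP_T(s_0,s_1)$ among weak sensor paths (Condition \ref{cond:trB}(iii)) gives $d_T(w_j,s^{\circ})\le q-|j-q|$. Combined with the second lower bound this forces $D\ge k+2$, a contradiction.

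The main obstacle is the remaining case $j=q$ with $s^\star$ \emph{not} lying on the $u_{q+1}$-branch: here $\cP_T(s_1,s^{\circ})$ shares no edge with $\cP_T(s_0,s_0')$, so Condition \ref{cond:trB}(ii) cannot be applied directly to force it to be weak. I would handle this by a small two-subcase analysis on $\cP_T(s_1,s^{\circ})$ itself: if it is weak, maximality gives $d_T(w_q,s^{\circ})\le q$; if it is strong, the bound $d_T(s_1,s^{\circ})\le k+1$ together with $d_T(s_0,s_1)\ge k+2$ gives $d_T(w_q,s^{\circ})\le q-1$. In both subcases, combining with the lower bound from $\cP_T(s_0',s^{\circ})$ being weak produces $D\ge k+2$, again contradicting step 1. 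This sub-case is conceptually delicate because of the missing edge-sharing, but once the two possibilities for $\cP_T(s_1,s^{\circ})$ are separated the arithmetic itself is routine.
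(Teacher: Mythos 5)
Your proof is correct and follows essentially the same strategy as the paper's: reduce to the sensor nearest to the path inside the offending branch, use Condition \ref{cond:trB}(ii) to force that sensor to be far from $\cP_T(s_0,s_0')$ (you do this via the weakness of $\cP_T(s_0,s^\circ)$ and $\cP_T(s_0',s^\circ)$, the paper equivalently via the fact that $w_1,w_{D-1}\in A_T(s_0,s_0')$ cannot be measured by $s^\star$), and Condition \ref{cond:trB}(iii) to force it to be close, yielding $D\ge k+2$. The only cosmetic difference is in the $j=q$ case, where the paper splits on $d_T(w_q,s^\star)\le q$ versus $>q$ while you split on $\cP_T(s_1,s^\circ)$ being strong versus weak; both dichotomies lead to the same pair of incompatible bounds.
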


\begin{proof}
First, we prove that $\typ_{s_0,s'_0}(s^\star)\notin\{0,q,d_T(s_0,s'_0)\}$ cannot hold. Assume indirectly that it does hold, and $1\le \typ_{s_0,s'_0}(s^\star)\le q-1$. We can assume that $\hgt_{s_0,s'_0}(s^\star)$ is minimal among the sensors of the same type, hence, there is no sensor besides $s^\star$ on the path $\cP_T(s^\star,w_{\typ(s^\star)})$. By the comments after Definition \ref{defn:trB} we have $w_{-1}:=w_{d_T(s_0,s'_0)-1}\in A_T(s_0,s'_0)$. This implies that $s^\star$ cannot measure $w_{-1}$, hence
\begin{equation}\label{eq:loc1}
d_T(s^\star,w_{-1})\ge k+1>d_T(s_0,w_{-1}).
\end{equation}
As $w_q$ lies on both paths $\cP_T(s_0,w_{-1})$ and $\cP_T(s^\star,w_{-1})$, \eqref{eq:loc1} gives
\begin{equation}\label{eq:loc2}
d_T(s^\star,w_q)>d_T(s_0,w_q).
\end{equation}
Furthermore, as $w_q$ lies on both paths $\cP_T(s_0,s_1)$ and $\cP_T(s^\star,s_1)$, \eqref{eq:loc2} implies that $\cP_T(s^\star,s_1)$ is a longer sensor path than $\cP_T(s_0,s_1)$, contradicting Condition \ref{cond:trB} (iii). Therefore, $1\le \typ_{s_0,s'_0}(s^\star)\le q-1$ cannot hold for any sensor $s^\star$.

The proof that $q+1\le \typ_{s_0,s'_0}(s^\star)\le d_T(s_0,s'_0)-1$ cannot hold either for any sensor $s^\star$ is analogous to the above, reversing the roles of $s_0$ and $s'_0$, and those of $w_1$ and $w_{-1}$.

We are left to prove the fact that if $\typ_{s_0,s'_0}(s^\star)=q$, then $\{w_q,u_{q+1}\}\in E(\cP_T(s_0,s^\star))$. Assume to the contrary that there exists a sensor $s^\star$ with $\typ_{s_0,s'_0}(s^\star)=q$, and $\{w_q,u_{q+1}\}\notin E(\cP_T(s_0,s^\star))$, moreover, $d_T(w_q,s^\star)$ is minimal among these sensors. Now if $d_T(w_q,s^\star)\le q$, then
\[
d_T(s^\star,s'_0)=d_T(s^\star,w_q)+d_T(w_q,s'_0)\le
d_T(s_0,w_q)+d_T(w_q,s'_0)=d_T(s_0,s'_0),
\]
implying that $\cP_T(s^\star,s'_0)$ is a strong sensor path sharing an edge with $\cP_T(s_0,s'_0)$, contradicting Condition \ref{cond:trB} (ii). On the other hand, if $d_T(w_q,s^\star)>q$, then
\begin{equation}\label{eq:loc3}
d_T(s^\star,s_1)=d_T(s^\star,w_q)+d_T(w_q,s_1)>
d_T(s_0,w_q)+d_T(w_q,s_1)=d_T(s_0,s_1),
\end{equation}
as $\{w_q,u_{q+1}\}\in E(\cP_T(s_0,s_1))$ and $\{w_q,u_{q+1}\}\notin E(\cP_T(s_0,s^\star))$ by the assumptions, implying that $\{w_q,u_{q+1}\}\in E(s^\star,s_1)$. The consequence of \eqref{eq:loc3} is that $\cP_T(s^\star,s_1)$ is a longer sensor path than $\cP_T(s_0,s_1)$, contradicting Condition \ref{cond:trB}. 
\end{proof}

Notice that Claim \ref{claim:loc} implies, in particular, that for any $v\in V_1$ in Definition \ref{defn:trB}, $\{w_q,u_{q+1}\}\in E(\cP_T(s_0,v))$. In fact,
\begin{equation}\label{eq:V1}
V_1=\{v\in A_T(s_0,s'_0): u_{q+1}\in V(\cP_T(w_q,v))\},
\end{equation}
and thus $A_T(s_0,s'_0)\setminus V_1\subseteq\wT_0$.

\begin{proof}[Proof of Claim \ref{claim:nocomm-B}]
(i) Since $s^\star$ and $y$ are in different components ($\wT_i$ and $\wT_j$), $s_0\in V(\cP_{T'}(s^\star,y))$ follows from the construction of the edge set $E_3$. Now consider the paths $\cP_{T'}(s^\star,s_0)$ and $\cP_{T'}(y,s_0)$. By the construction of the edge set $E_3$ again, the vertices on these two paths neighboring $s_0$ are $x_i$ and $x_j$, respectively. Since $x_i\ne x_j$, at least one of these two is not equal to $u_{q+1}$. Without loss of generality, assume that $x_i\ne u_{q+1}$ (the proof of the other case is analogous). This implies that on the path $\cP_T(x_i,s_0)$ the edge incident to $x_i$ was removed as part of $E_1$ because its other endpoint, say $v$, belonged to $V_1$ (and not because it was the edge $\{w_q,u_{q+1}\}$). Hence, $v\in A_T(s_0,s'_0)$, implying that $d_T(s^\star,v)\ge k+1$. Consequently,
\[
d_T(s^\star,x_i)\ge d_T(s^\star,v)-1\ge k+1-1=k,
\]
and thus
\[
d_{T'}(s^\star,y)\ge d_{T'}(s^\star,s_0)=d_T(s^\star,x_i)+1\ge k+1,
\]
since the path $\cP_T(s^\star,x_i)$ remains untouched by the transformation.

To prove part (ii) notice that by Definition \ref{defn:trB}, $\typ_{s_0,s'_0}(y)=q$. On the other hand, Claim \ref{claim:loc} implies that $\typ_{s_0,s'_0}(s^\star)$ is either $0$ or $d_T(s_0,s'_0)$ (as option (iii) of Claim \ref{claim:loc} would contradict with $s^\star\in\wT_0$). In the former case $\cP_T(s^\star,y)$ contains $\cP_T(s_0,w_q)$ as a sub-path, and in the latter it contains $\cP_T(s'_0,w_q)$ as a sub-path, finishing the proof.
\end{proof}

We continue by showing that there cannot be too many vertices in $V_1\cup V_2$.

\begin{claim}\label{claim:typq}
Consider the notation of Definition \ref{defn:trB}. Then the following hold in $T$:
\begin{itemize}
\item[(i)] for every $h\in\{1,2,\ldots,k-q\}$, $|\{v\in V_1\cup V_2:\ \hgt_{s_0,s'_0}(v)=h\}|\in\{0,1\}$, and
\item[(ii)] for every $h>k-q$, $|\{v\in V_1\cup V_2:\ \hgt_{s_0,s'_0}(v)=h\}|=0$.
\end{itemize}
Consequently, $|V_1\cup V_2|\le k-q$, and every vertex in $V_1\cup V_2$ is directly measured by both $s_0$ and $s'_0$ in $T'=\tr_B(T,S,s_0,s_1)$.
\end{claim}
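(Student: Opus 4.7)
My approach is to prove (ii) first, then (i), and finally to deduce the consequence. Two ingredients are used throughout: Claim \ref{claim:loc}, which forces every sensor to be of type $0$, $d_T(s_0, s'_0)$, or $q$ (and in the latter case with $\{w_q, u_{q+1}\}$ on the path to $s_0$), and the uniform lower bound
\[
d_T(\tilde s, w_q) \ge k + 2 - \min(q,\ d_T(s_0, s'_0) - q)
\]
for every type-$q$ sensor $\tilde s$. The latter follows because any strong sensor path from $s_0$ or $s'_0$ to $\tilde s$ would share an initial segment with $\cP_T(s_0, s'_0)$, contradicting Condition \ref{cond:trB}(ii) through Remark \ref{rem:fourpaths}.

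\emph{Part (ii).} For $v \in V_1 \subseteq A_T(\{s_0, s'_0\})$, Definition \ref{defn:unique-measuring} directly gives $d_T(s_0, v) = q + \hgt_{s_0, s'_0}(v) \le k$, so $\hgt \le k - q$. For $v \in V_2$, note that $v$ lies in a branch at $w_q$ distinct from the three branches toward $s_0$, $s'_0$, and $u_{q+1}$, so $w_q$ has degree at least four and every path from a sensor to $v$ traverses $w_q$ -- hence is not a leaf-path. Condition \ref{cond:trB}(i) forces $v$ to belong to the attraction of some $S' \subseteq S$ with $|S'| \ge 2$, and Claim \ref{claim:loc} restricts $S'$ to be a subset of $\{s_0, s'_0\}$ together with type-$q$ sensors. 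A case analysis on $S'$ -- using that $s'_0$ imposes $\hgt \le k - (d_T(s_0, s'_0) - q)$ and each type-$q$ sensor imposes $\hgt \le \min(q,\ d_T(s_0, s'_0) - q) - 2$, each of which is $\le k - q$ under $d_T(s_0, s'_0) \le k + 1$ -- shows $\hgt(v) \le k - q$ in every admissible case.

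\emph{Part (i).} Fix $h \in \{1, \ldots, k - q\}$ and suppose $v \ne v'$ both lie in $V_1 \cup V_2$ with height $h$. They share the distance pair $(q+h,\ d_T(s_0, s'_0) - q + h)$ to $(s_0, s'_0)$, so these two sensors cannot resolve them. If $v, v' \in V_1$, they both lie in $A_T(\{s_0, s'_0\})$; no other sensor directly measures either, and any other sensor reaches them along a path through $s_0$ or $s'_0$, giving equal distances. If $v, v' \in V_2$, every sensor's unique path to either vertex passes through $w_q$, giving the common distance $d_T(s^\star, w_q) + h$. In the mixed case $v \in V_1$, $v' \in V_2$, the only potential resolver is a type-$q$ sensor $\tilde s$ that directly measures $v'$, which requires $d_T(\tilde s, w_q) \le k - h$. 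A tree-distance computation in the subtree rooted at $u_{q+1}$ then yields
\[
d_T(\tilde s, v) \le d_T(\tilde s, u_{q+1}) + d_T(u_{q+1}, v) \le d_T(\tilde s, w_q) + h - 2 \le k - 2,
\]
and replacing $\tilde s$ by the type-$q$ sensor closest to $v$ on $\cP_T(\tilde s, v)$ produces a type-$q$ sensor that directly measures $v$, contradicting $v \in A_T(\{s_0, s'_0\})$.

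\emph{Consequence and main obstacle.} Combining (i) and (ii), the heights of the vertices in $V_1 \cup V_2$ are pairwise distinct elements of $\{1, \ldots, k - q\}$, so $|V_1 \cup V_2| \le k - q$. The symmetric application of part (ii), with $s_0$ and $s'_0$ interchanged, gives the complementary bound $|V_1 \cup V_2| \le k - (d_T(s_0, s'_0) - q)$. In $T' = \tr_B(T, S, s_0, s_1)$ the $i$-th vertex on the new leaf-path attached to $w_q$ lies at distance $q + i \le k$ from $s_0$ and $(d_T(s_0, s'_0) - q) + i \le k$ from $s'_0$, and the interior vertices of both shortest paths are non-sensors (by Claim \ref{claim:loc} up to $w_q$ and by the construction of $E_2$ afterward), so both sensors directly measure every vertex of $V_1 \cup V_2$ in $T'$. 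The most delicate step is the mixed subcase of part (i): the ``closest type-$q$ sensor on $\cP_T(\tilde s, v)$'' construction must be iterated carefully to rule out that some distant type-$q$ sensor measures $v'$ while only indirectly measuring $v$.
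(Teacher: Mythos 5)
Your proposal follows the same skeleton as the paper's proof: Claim \ref{claim:loc} to restrict sensor locations, the no-shared-strong-sensor-path condition (via Remark \ref{rem:fourpaths}) to push type-$q$ sensors far from $w_q$, and the bijection between $(\typ,\hgt)$ and the distance pair to $(s_0,s'_0)$ to rule out repeated heights. Two points need repair. First, in part (ii) the assertion that the $s'_0$-induced bound $k-(d_T(s_0,s'_0)-q)$ is at most $k-q$ is false: since $d_T(s_0,s'_0)-q=d_T(s'_0,w_q)\le q$ (forced by Condition \ref{cond:trB}(iii)), one has $k-(d_T(s_0,s'_0)-q)\ge k-q$, i.e.\ the inequality goes the wrong way. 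Your case analysis survives only because the single case in which the $s'_0$ bound would be the sole available one, namely $S'=\{s'_0\}$, is excluded by $|S'|\ge 2$: every admissible $S'$ contains either $s_0$ (giving $\hgt\le k-q$ directly) or a type-$q$ sensor (giving $\hgt\le (d_T(s_0,s'_0)-q)-2\le k-q$, which does hold since $d_T(s_0,s'_0)\le k+1$). You should argue this way rather than claim that all the listed bounds are $\le k-q$. (The paper avoids this entirely by showing that any direct measurer $s\notin\{s_0,s'_0\}$ of $v\in V_2$ satisfies $d_T(s,w_q)>q$, whence $s_0$ itself directly measures every vertex of $V_2$.) Second, in the mixed subcase of (i) your list of potential resolvers (type-$q$ sensors directly measuring $v'$) omits type-$q$ sensors that measure $v$ without measuring $v'$; since $d_T(\tilde s,v)<d_T(\tilde s,v')$, such a sensor also resolves the pair via the truncated distances. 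The omission is harmless: any type-$q$ sensor measuring $v$ at all yields, by passing to the sensor closest to $v$ on $\cP_T(\tilde s,v)$ (a path avoiding $s_0$ and $s'_0$), a sensor outside $\{s_0,s'_0\}$ directly measuring $v$, contradicting $v\in A_T(s_0,s'_0)$ --- which is essentially the paper's route, except that the paper first invokes Definition \ref{defn:unique-measuring}(iii) to conclude $v'\notin A_T(s_0,s'_0)$ and then works with a direct measurer of $v'$. With these two repairs your argument is complete and equivalent to the paper's.
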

\begin{proof}
In the following, 'type' and 'height' will always refer to type and height in $T$ with respect to $s_0,s'_0$. First, there cannot be two distinct vertices $x,y\in V_1$ with the same height, since both $x$ and $y$ are of type $q$, and both belong to $A_T(s_0,s'_0)$ (see the discussion before Definition \ref{defn:trB}). Next, by Claim \ref{claim:loc}, for every sensor $s\in S$ and for every $v\in V_2$ it holds that $w_q\in\ V(\cP_T(s,v))$. Hence, for any $h\ge 1$ if there were at least two vertices in $V_2$ with the same height (hence, the same distance from $w_q$), then they would not be distinguished from each other in $T$ by any sensor.

Note that we must have $q=d_T(s_0,w_q)\ge d_T(s'_0,w_q)$ as otherwise $\cP_T(s'_0,s_1)$ would be a longer sensor path than $\cP_T(s_0,s_1)$, violating Condition \ref{cond:trB}(iii). This implies, by the discussion before Definition \ref{defn:trB}, that the maximal height of a type-$q$ vertex in $A_T(s_0,s'_0)$ is $k-q$. Consequently, for any $v\in V_1$, $\hgt_{s_0,s'_0}(v)\le k-q$. To show the same for the vertices of $V_2$ fix some $v\in V_2$ and assume that there exists a sensor $s\in S\setminus\{s_0,s'_0\}$ that directly measures $v$. Then $s$ also directly measures $w_q$ (by Claim \ref{claim:loc}). If $d_T(s,w_q)\le q$ held, then $\cP_T(s,s'_0)$ would be a sensor path that is at most as long as $\cP_T(s_0,s'_0)$, meaning that it would be a strong sensor path. $\cP_T(s,s'_0)$ and $\cP_T(s_0,s'_0)$ would then form a pair of strong sensor paths sharing an edge, contradicting Condition \ref{cond:trB}(ii). Hence, $d_T(s,w_q)>q=\max\{d_T(s_0,w_q),d_T(s'_0,w_q)\}$. Since $w_q\in V(\cP_T(s,v))$, this implies that if $s$ directly measures $v$, then so do both $s_0$ and $s'_0$. Hence, regardless of the locations of the other sensors, $s_0$ and $s'_0$ both measure directly every vertex in $V_2$, implying that their height can be at most $k-q$.

Finally, we have to show that if $x\in V_1$ and $y\in V_2$, then $\hgt_{s_0,s'_0}(x)=\hgt_{s_0,s'_0}(y)$ cannot hold. Assume that it does hold. Then $x\in A_T(s_0,s'_0)$ implies that $y\notin A_T(s_0,s'_0)$ (otherwise they would not be distinguished). That is, there exists a sensor $s_y\in S\setminus \{s_0,s'_0\}$ that directly measures $y$. As before, $w_q \in V(\cP_T(s_y,y))$ holds by Claim \ref{claim:loc}. This implies that
\begin{equation}\label{eq:typq1}
 d_T(s_y,x)\le d_T(s_y,w_q)+d_T(w_q,x)
=d_T(s_y,w_q)+d_T(w_q,y)
=d_T(s_y,y)\le k,  
\end{equation}

where in \eqref{eq:typq1} we used that $\hgt_{s_0,s'_0}(x)=\hgt_{s_0,s'_0}(y)$, combined with $\typ_{s_0,s'_0}(x)=\typ_{s_0,s'_0}(y)=q$. Hence, $s_y$ also measures $x$. By $s_0,s'_0\notin V(\cP_T(s_y,x))$, this means that $s_y$ either directly measures $x$, or there is a sensor $s'\in S\setminus\{s_0,s'_0\}$ on the path $\cP_T(s_y,x)$ that directly measures $x$, contradicting the assumption that $x\in A_T(s_0,s'_0)$.

Combining all the above finishes the proof of (i) and (ii). It then follows that $|V_1\cup V_2|\le k-q$. Since $q=d_{T'}(s_0,w_q)\ge d_{T'}(s'_0,w_q)$, and the vertices of $V_1\cup V_2$ form a single leaf-path emanating from $w_q$ in $T'$, this implies that both $s_0$ and $s'_0$ directly measure every vertex in $V_1\cup V_2$ in $T'$.
\end{proof}

\subsection{Proof that Transformation B works}\label{sec:B-proof}
\begin{proof}[Proof of Lemma \ref{lem:trBprop}]
{\bf Proof of (i):} Let $x, y\in V$ be a pair of distinct vertices. We shall prove that there is a sensor in $S$ that resolves them in $T'$, similarly to the proof of Lemma \ref{lem:trAprop}(i). We will use the notations of Definition \ref{defn:trB}. We will do a case-distinction analysis with respect to the location of $x$ and $y$ in the components $\wT_i, i\ge 0$ or in $V_1$ described in the transformation. The numbering of the cases is consistent with that in the proof of Lemma \ref{lem:trAprop}(i).

{\bf Case 1:} Assume that $x\in\wT_i$ for some $i\ge 1$, and that $y\in\wT_j$ for some $j\ge 0$, $j\ne i$. Then, since $x\in V\setminus(A_T(s_0)\cup A_T(s_0')\cup A_T(s_0,s_0'))$, there is a sensor $s'\in S\setminus\{s_0,s_0'\}$ that directly measures $x$. Then, by Claim \ref{claim:nocomm-B}(i)--(ii), $s'\in\wT_i$. Therefore, the edges of $\cP_T(s',x)$ are unchanged in $T'$, so $s'$ still directly measures $x$ in $T'$. Then, either $s'$ distinguishes $x$ and $y$ in $T'$, or
\begin{equation}\label{eq:trB1}
d_{T'}(s',y)=d_{T'}(s',x)\le k.
\end{equation}
Assume that we have this latter case. Now Claim \ref{claim:nocomm-B}(i) implies that $s_0\in V(\cP_{T'}(s',y))$ (this is also true if $y\in\wT_0$ by the construction). Then $d_{T'}(s_0,y)\le d_{T'}(s',y)\le k$, so $s_0$ also measures $y$ in $T'$. We will prove that in this case $s_0$ distinguishes $x$ and $y$ in $T'$. Assume that this is not the case, and in fact
\begin{equation}\label{eq:trB2}
d_{T'}(s_0,x)=d_{T'}(s_0,y)\le k.
\end{equation}
Then \eqref{eq:trB1} and \eqref{eq:trB2} imply that
\begin{equation}\label{eq:trB3}
d_{T'}(s',y)-d_{T'}(s_0,y)=d_{T'}(s',x)-d_{T'}(s_0,x).
\end{equation}
Since $s_0\in V(\cP_{T'}(s',y))$, the left-hand side of \eqref{eq:trB3} is equal to $d_{T'}(s',s_0)$. However, this could only be equal to the right-hand side of \eqref{eq:trB3} if $s_0\in V(\cP_{T'}(s',x))$ held, which cannot be the case, since $\cP_{T'}(s',x)$ is fully contained in $\wT_i$. This contradiction finishes the proof of Case 1.

{\bf Case 2:} Now assume that $x,y\in\wT_i$ for some $i\ge 1$. Let $s'$ be a sensor that distinguishes them in $T$. If $s'\in\wT_i$, then the paths $\cP_T(s',x)$ and $\cP_T(s',y)$ remain unchanged by the transformation, hence $s'$ still distinguishes $x$ and $y$ in $T'$ and we are done. If $s'\notin\wT_i$, then $s'\in\wT_0$ by Claim \ref{claim:nocomm-B}(i). Then, since $x,y\in\wT_i$, the paths $\cP_T(s',x)$ and $\cP_T(s',y)$ both contain $x_i$ (the closest vertex to $w_q$ in $\wT_i$). On the other hand, the paths $\cP_T(x_i,x)$ and $\cP_T(x_i,y)$ fully belong to $\wT_i$, so they are unchanged by the transformation, while the edge $\{s_0,x_i\}$ is added when creating $T'$. Combining these facts gives
\begin{align}
d_{T'}(s_0,x)&=1+d_{T'}(x_i,x)=1+d_T(x_i,x)\le d_T(s',x_i)+d_T(x_i,x)=d_T(s',x),\label{eq:trBcase21}\\
d_{T'}(s_0,y)&=1+d_{T'}(x_i,y)=1+d_T(x_i,y)\le d_T(s',x_i)+d_T(x_i,y)=d_T(s',y),\label{eq:trBcase22}
\end{align}
and we assumed that the rhs of both sides is at most $k$, so $s_0$ measures both $x$ and $y$ in $T'$, and further, since we replaced the segment $\cP_T(s_0, x_i)$ by a single edge in $T'$,
\begin{equation}\label{eq:trBcase23}
d_{T'}(s_0,x)-d_{T'}(s_0,y)=d_T(x_i,x)-d_T(x_i,y)=d_T(s',x)-d_T(s',y).
\end{equation}
The combination of \eqref{eq:trBcase21}, \eqref{eq:trBcase22} and \eqref{eq:trBcase23} implies that if $s'$ distinguished $x$ and $y$ in $T$, then $s_0$ distinguishes them in $T'$, finishing the proof.

{\bf Case 3:} Next, assume that $x,y\in\wT_0$. Then there is a sensor $s'$ that resolves them in $T$. If $s'\in\wT_0$, then the paths $\cP_T(s',x)$ and $\cP_T(s',y)$ completely lie in $\wT_0$, so they stay intact during the transformation. Hence, $s'$ still resolves $x,y$ in $T'$ and we are done. Now assume that $s'\in\wT_i$ for some $i\ge 1$. We will show that in this case either $s_0$ or $s'_0$ resolves $x,y$ in $T'$.

By Claim \ref{claim:loc}, $\typ_{s_0,s'_0}(s')=q$ in $T$, and $\{w_q,u_{q+1}\}\in E(\cP_T(w_q,s'))$. Hence, both $\cP_T(s',x)$ and $\cP_T(s',y)$ contain $w_q$. Without loss of generality we can assume that there is no sensor besides $s'$ on the path $\cP_T(s',w_q)$ (if there was one, we could relabel that to $s'$). Now if $d_T(s',w_q)\le\max\{d_T(s_0,w_q),d_T(s'_0,w_q)\}$ then two of $\cP_T(s_0,s'_0)$, $\cP_T(s_0,s')$ and $\cP_T(s'_0,s')$ would form a pair strong sensor paths that share an edge, contradicting Condition \ref{cond:trB}(ii). Hence, $d_T(s',w_q)>\max\{d_T(s_0,w_q),d_T(s'_0,w_q)\}$, thus
\begin{equation}\label{eq:also-measure-1}
    d_T(s_0,x)\le d_T(s_0,w_q)+d_T(w_q,x)<d_T(s',w_q)+d_T(w_q,x)=d_T(s',x),
\end{equation}

and the same holds for $s'_0$ in place of $s_0$. This implies that if $s'$ measures $x$ in $T$, then so do $s_0$ and $s'_0$, and the same reasoning holds for $y$.

We know that $s'$ measures at least one of $x$ and $y$ in $T$, say $x$. Then, by \eqref{eq:also-measure-1} both $s_0$ and $s'_0$ also measure $x$ in $T$. Since $s_0,s'_0,x,y\in\wT_0$, the paths $\cP_T(s_0,x)$, $\cP_T(s'_0,x)$ remain unchanged by the transformation, hence, $s_0$ and $s'_0$ both measure $x$ in $T'$ too. Now assume that neither $s_0$ nor $s'_0$ distinguishes $x,y$ in $T'$.  Recalling Definition \ref{defn:type-height}, this means that
\begin{equation*}
   \typ_{s_0,s'_0}(x)=\typ_{s_0,s'_0}(y),\qquad
\hgt_{s_0,s'_0}(x)=\hgt_{s_0,s'_0}(y),
\end{equation*}
in $T'$, and thus also in $T$, otherwise at least one of $s_0, s_0'$ would resolve $x,y$. In particular this also implies that $d_T(w_q,x)=d_T(w_q,y)$. Then
\begin{align*}
d_{T}(s',x)&=d_{T}(s',w_q)+d_{T}(w_q,x)
=d_T(s',w_q)+d_T(w_q,y)
=d_T(s',y)
\end{align*}
by $w_q\in V(\cP_T(s',x))\cap V(\cP_T(s',y))$. This contradicts the assumption that $s'$ resolved $x,y$ in $T$, thus finishing the proof.

{\bf Case 4:} Assume that $x,y\in V_1\cup V_2$. Then, by the construction of the edge set $E_2$, $x$ and $y$ will both lie on a leaf-path in $T'$ emanating from $w_q$. Hence, $\typ_{s_0,s'_0}(x)=\typ_{s_0,s'_0}(y)$ and $\hgt_{s_0,s'_0}(x)\ne\hgt_{s_0,s'_0}(y)$ in $T'$, so $x$ and $y$ are distinguished by $s_0$ or $s'_0$ (or both) as long as at least one of them is measured by at least one of $s_0$ and $s'_0$. But in fact both $x$ and $y$ are measured by both $s_0$ and $s'_0$ in $T'$ by Claim \ref{claim:typq}, finishing the proof.

{\bf Case 5:} Next, assume that $x\in\wT_i$ for some $i\ge 1$, and $y\in V_1\cup V_2$. The proof in this case is identical to that of Case 1.

{\bf Case 6:} Finally, assume that $x\in\wT_0$ and $y\in V_1\cup V_2$. Then, by the construction of $T'$, and by Claim \ref{claim:typq}, in $T'$ we have $\typ_{s_0,s'_0}(y)=q$ and $1\le\hgt_{s_0,s'_0}(y)\le k-q$, while for $x$ either $x=w_q$ or $\typ_{s_0,s'_0}(x)\ne q$. This implies that $x,y$ are resolved by $\{s_0,s'_0\}$ in $T'$, either by Claim \ref{claim:type-difference} or by the fact that the height is not identical.

{\bf Proof of (ii):} Assume that for a sensor $s\in S$, the vertices of $A_T(s)$ form a leaf-path emanating from $s$ in $T$. Then none of the vertices in $A_T(s)$ are adjacent to a vertex in $V_1\subseteq A_T(s_0,s_0')$. Also, any vertex in $A_T(s)$ can only be adjacent to a type-$q$ vertex in $T$ (with respect to $s_0,s'_0$) if $\typ_{s_0,s'_0}(s)=q$. In this case, by Claim \ref{claim:loc}, $\{w_q,u_{q+1}\}\in E(\cP_T(w_q,s))\subseteq E(\cP_T(w_q,v))$ for any $v\in A_T(s)$. Thus, none of the vertices in $A_T(s)$ are adjacent to a vertex in $V_2$ either. Hence, the removal of the edge set $E_1$ does not change this leaf-path. The addition of the edge set $E_2$ also does not add an edge adjacent to any of the vertices in $A_T(s)$. After these steps, if any one of the vertices in $A_T(s)\cup \{s\}$ is in $\wT_i$ for some $i\ge 1$, then all of $A_T(s)\cup \{s\}$ are in $\wT_i$, and $s$ was closer to $s_0$ in $T$ than any vertex in $A_T(s)$. Hence, the addition of the new edge between $s_0$ and $\wT_i$ in $E_3$ will again not add an edge adjacent to any vertex in $A_T(s)$. This proves that $A_T(s)\subseteq A_{T'}(s)$, and the vertices of $A_T(s)$ still form a leaf-path emanating from $s$ in $T'$.

Next, we have to show that there is no new vertex in $A_{T'}(s)$ compared to $A_T(s)$ for any sensor $s$. Assume that $x\in V\setminus(S\cup(\cup_{s\in S}A_T(s)))$. To prove that $x\notin A_{T'}(s)$ for any $s\in S$, we distinguish the following cases.

{\bf Case 1:} Assume that $\typ_{s_0,s'_0}(x)=0$ in $T$. Then any sensor $s^\star\in S$ that directly measures $x$ in $T$ also has $\typ_{s_0,s'_0}(s^\star)=0$ (otherwise $s_0\in V(\cP_T(s^\star,x))$ would hold). Hence, the path $\cP_T(s^\star,x)$ remains unchanged by the transformation. Since $x\in V\setminus(S\cup(\cup_{s\in S}A_T(s)))$, there are at least two sensors $s^\star_1,s^\star_2$ that directly measure $x$ in $T$, hence, by the above reasoning applied twice, they both measure $x$ directly in $T'$ too, proving that $x\notin A_{T'}(s)$ for any $s\in S$.

{\bf Case 2:}  Assume that $\typ_{s_0,s'_0}(x)=d_T(s_0,s'_0)$ in $T$. The proof in this case is identical to that of Case 1 with $s'_0$ taking the role of $s_0$, and type $d_T(s_0,s'_0)$ taking that of type 0.

{\bf Case 3:} Assume that $\typ_{s_0,s'_0}(x)\notin\{0,q,d_T(s_0,s'_0)\}$ in $T$. Then by Definition \ref{defn:trB} the paths $\cP_T(s_0,x)$, $\cP_T(s'_0,x)$ remain untouched by the transformation. We will show that this implies that both $s_0$ and $s'_0$ directly measure $x$ in $T'$, and as a result $x\notin\cup_{s\in S}A_{T'}(s)$.
Assume that $x\notin \cup_{s\in S}A_{T'}(s)\cup S$. Then there is at least two sensors $s_1^\star,s_2^\star$ directly measuring $x$ in $T$. We shall show that $s_0, s_0'$ can have these roles.
If we immediately know that $s_0, s_0'$ both directly measure $x$, we are done. Suppose now that we only know that there is an $s^\star\in S\setminus \{s_0, s_0'\}$ that directly measures $x$. 

 By Claim \ref{claim:loc}, for every sensor $s\in S$, $\typ_{s_0,s'_0}(s)\in\{0,q,d_T(s_0,s'_0)\}$. Since $w_q\notin S$, and $\typ_{s_0,s'_0}(x)\neq \{0,q,d_T(s_0,s'_0)\}$ by assumption, this means that there is no sensor on the paths $\cP_T(s_0,x)$, $\cP_T(s'_0,x)$ besides $s_0$, $s'_0$, respectively. Furthermore, for every sensor $s^\star\in S$ that directly measures $x$, $w_q\in V(\cP_T(s^\star,x))$, so $s^\star$ also directly measures $w_q$. Now we use the fact that in this case $d_T(s^\star,w_q)>\max\{d_T(s_0,w_q),d_T(s'_0,w_q)\}$, as in the proof of Claim \ref{claim:typq}, since otherwise two of the paths $\cP_T(s^\star,s_0)$, $\cP_T(s^\star,s'_0)$ and $\cP_T(s_0,s'_0)$ would form a pair of strong sensor paths sharing an edge, contradicting Condition \ref{cond:trB}. Hence,
\[
d_T(s_0,x)\le d_T(s_0,w_q)+d_T(w_q,x)<d_T(s^\star,w_q)+d_T(w_q,x)=d_T(s^\star,x)\le k,
\]
and the same holds for $s'_0$ in place of $s_0$. Therefore, both $s_0$ and $s'_0$ directly measure $x$ in $T$. By the fact that $\mathrm{typ}_{s_0,s_0'}(x)\neq q$, the paths $\cP_T(s_0, x), \cP_T(s_0', x)$ are untouched by transformation B, $s_0, s_0'$ both directly measure $x$ in $T'$ as well, showing that $x\notin A_{T'}(s)$ for any sensor $s\in S$.

{\bf Case 4:} Assume that $x\in V_1\cup V_2$. Then $x$ will lie on a leaf-path in $T'$ emanating from $w_q$, which is of length at most $k-q$ by Claim \ref{claim:typq}. Hence, $x$ will be directly measured by both $s_0$ and $s'_0$ in $T'$, implying that $x\notin A_{T'}(s)$ for any $s\in S$.

{\bf Case 5:} The last case is when $\typ_{s_0,s'_0}(x)=q$ in $T$ and $x\notin V_1\cup V_2$. Then, by Definition \ref{defn:trB}, $x\in\wT_i$ for some $i\ge 1$, and $x\notin A_T(s_0,s'_0)$. This latter fact implies that there exist two distinct sensors $s_1^\star,s_2^\star$ that directly measure $x$ in $T$ and $\{s_1^\star,s_2^\star\}\ne\{s_0,s'_0\}$. If both $s_1^\star,s_2^\star\in\wT_i$, then the paths $\cP_T(s_1^\star,x)$, $\cP_T(s_2^\star,x)$ remain unchanged by the transformation, hence, both $s_1^\star$ and $s_2^\star$ still directly measure $x$ in $T'$, finishing the proof. If either $s_1^\star$ or $s_2^\star$ is not in $\wT_i$, then it has to be in $\{s_0,s'_0\}$ by Claim \ref{claim:nocomm-B}(i)--(ii). Since $\{s_1^\star,s_2^\star\}\ne\{s_0,s'_0\}$, we can assume in this case that $s_1^\star\in\{s_0,s'_0\}$ and $s_2^\star\in\wT_i$. Similarly as above, $s_2^\star$ then directly measures $x$ in $T'$. We will finish the proof by showing that the fact that either $s_0$ or $s'_0$ directly measures $x$ in $T$ implies that $s_0$ directly measures $x$ in $T'$. Since $x_i$ is the closest vertex of $\wT_i$ to $s_0$ in $T$, and the edge $\{s_0,x_i\}$ is added in $T'$ by the transformation, we have
\[
d_{T'}(s_0,x)=1+d_{T'}(x_i,x)=1+d_T(x_i,x),
\]
since the path $\cP_T(x_i,x)$ remains unchanged by the transformation. Hence,
\[
d_{T'}(s_0,x)\le\min\{d_T(s_0,x_i),d_T(s'_0,x_i)\}+d_T(x_i,x)=\min\{d_T(s_0,x),d_T(s'_0,x)\}\le k,
\]
as $x_i$ lies on both paths $\cP_T(s_0,x)$ and $\cP_T(s'_0,x)$. This shows that $s_0$ indeed measures $x$ in $T'$. The fact that $s_0$ directly measures $x$ in $T'$ follows from the fact that $\cP_{T'}(x_i,x)$ does not contain any sensors, since it is a subpath of both $\cP_{T}(s_0,x)$ and $\cP_T(s'_0,x)$, and one of these did not contain any internal sensors by the assumption. This finishes the proof that there are indeed at least two sensors that directly measure $x$ in $T'$, and thus $x\notin A_{T'}(s)$ for any $s\in S$.

{\bf Proof of (iii):} If $\cP_{T'}(s,s')$ is a sensor path, then there are two possible cases. First, if $s,s'\in\wT_i$ for some $i\ge 0$, then $\cP_{T'}(s,s')$ is the same as $\cP_T(s,s')$ (its edges remain unchanged by the transformation), hence, $\cP_T(s,s')$ is also a sensor path with the same length. Second, if, say, $s'=s_0$, and $s\in\wT_i$ for some $i\ge 1$, then the path $\cP_{T'}(s_0,s)$ consists of the sub-path $\cP_T(x_i,s)$ of $\cP_T(s_0,s)$ and the edge $\{s_0,x_i\}$, where recall that $x_i$ is the closest vertex of $\wT_i$ to $s_0$ in $T$. If $\cP_{T'}(s_0,s)$ is a sensor path, then there is no sensor beside $s$ on $\cP_{T'}(x_i,s)=\cP_{T}(x_i,s)$. On the other hand, if $y$ is the vertex neighboring $x_i$ on the path $\cP_T(s_0,x_i)$, then either $y\in A_T(s_0,s_0')$ or $y=w_q$, both implying that $s_0$ directly measures $y$ in $T$, that is, there is no other sensor between them. Consequently, $\cP_T(s_0,s)$ is indeed a sensor path, and its length is more than that of $\cP_{T'}(s_0,s)$, as $y\in V(\cP_T(s_0,s))\setminus V(\cP_{T'}(s_0,s))$.

There are indeed no more cases for a sensor path $\cP_{T'}(s,s')$, as the construction in Definition \ref{defn:trB} ensures that if $s$ and $s'$ are in different components among $\wT_0$, $\wT_1,\ldots$, then $\cP_{T'}(s,s')$ contains $s_0$.

{\bf Proof of (iv):} By Definition \ref{defn:trB}, $x_1$ is the closest vertex to $s_0$ in $T$ in the subtree $\wT_1$. Since $T$ is a tree, and $s_1\in\wT_1$, $x_1$ lies on the path $\cP_T(s_0,s_1)$. This implies that $\cP_{T}(x_1,s_1)\subseteq \cP_T(s_0,s_1)$, and the edges of $\cP_{T}(x_1,s_1)$ stay intact in $T'$. In particular, $\cP_{T'}(x_1,s_1)$ does not contain another sensor besides $s_1$. Since $\{s_0,x_1\}$ is an edge in $T'$, this proves that $\cP_{T'}(s_0,s_1)$ is indeed a sensor path in $T'$.

Next, we will prove that $\cP_{T'}(s_0,s_1)$ is strictly shorter than $\cP_{T}(s_0,s_1)$. Since $x_1\in\wT_1$, and $w_1\in\wT_0$, it holds that $x_1\ne w_1$, furthermore, $w_1\in V(\cP_T(s_0,x_1))\setminus\{s_0,x_1\}$. Hence, $|E(\cP_T(s_0,x_1))|\ge 2$, while $\{s_0,x_1\}$ is a single edge in $T'$. Since the edges of $\cP_T(x_1,s_1)$ remain unchanged in $T'$,
\[ \cP_T(s_0,s_1)=\cP_T(s_0, x_1)\cup \cP_T(x_1, s_1) \quad \mbox{ and } \quad  \cP_{T'}(s_0,s_1)=\{s_0,x_1\}\cup \cP_T(x_1, s_1),\] this finishes the proof.

{\bf Proof of (v):} Let $z$ be the vertex next to $s_1$ on the path $\cP_T(s_0,s_1)$. Since $\cP_T(s_0,s_1)$ is a weak sensor path, $s_0$ cannot measure $z$ in $T$. On the other hand, $s_0'$ cannot measure $z$ in $T$ either, as we will now show. Assume $s_0'$ measures $z$ in $T$. Then we have
\begin{equation}\label{eq:trBv1}
d_T(s_0',s_1)\le d_T(s_0',z)+d_T(z,s_1)\le k+1.
\end{equation}
We also have $V(\cP_T(s'_0,s_1))=V(\cP_T(s'_0,w_q))\cup V(\cP_T(w_q,s_1))$, and $V(\cP_T(w_q,s_1))\subseteq V(s_0,s_1)$. Since $\cP_T(s_0,s_1)$ and $\cP_T(s_0,s'_0)$ are both sensor paths, it follows that $\cP_T(s'_0,s_1)$ is also a sensor path. Hence, \eqref{eq:trBv1} shows that $\cP_T(s_0',s_1)$ is a strong sensor path. Thus $\cP_T(s_0,s_0')$ and $\cP_T(s_0',s_1)$ are a pair of strong sensor paths sharing an edge. This contradicts with Condition \ref{cond:trB}(ii), showing that neither $s_0$ nor $s_0'$ can measure $z$ in $T$. But $z\in\cP_T(s_0,s_1)$, so by Condition \ref{cond:trB}(i), $z\notin A_T(s_1)$, i.e., there has to be another sensor besides $s_1$ that directly measures $z$, say $s_2$. This, similarly to \eqref{eq:trB1}, shows that $\cP_T(s_2,s_1)$ is a strong sensor path. This, in particular, implies that $s_2\in\wT_1$, and the edges of $\cP_T(s_2,s_1)$ remain unchanged in $T'$. Now, if after the transformation, $\cP_{T'}(s_0,s_1)$ becomes a strong sensor path, then $s_2\notin\cP_{T'}(s_0,s_1)$ shows that $\cP_{T'}(s_0,s_2)$ and $\cP_{T'}(s_2,s_1)$ form a pair of strong sensor paths that share an edge, thus finishing the proof of part (v).
\end{proof}

\section{Step C: Overlapping short sensor paths are suboptimal}\label{sec:C}
Our final building block will be Lemma \ref{lem:trC} that shows that an optimal tree cannot have overlapping strong sensor paths. We will fix notation related to overlapping strong sensor paths as follows.
To help the reader we try to make notation as similar to Transformations A and B as possible.

\begin{cond}\label{cond:shortest}
Let $T=(V,E)$ be a tree with a threshold-$k$ resolving set $S\subseteq V$. Assume that $T$ has at least one pair of strong sensor paths that share an edge. We assume that 
$\cP_T(s_0, s'_0)$ is one of the shortest among the strong sensor paths that share an edge with another strong sensor path, and that $\cP_T(s_0, s_1)$ is one of the shortest among the strong sensor paths that share an edge with $\cP_T(s_0, s'_0)$.
With $|V(\cP_T(s_0, s'_0))\cap V(\cP_T(s_0, s_1))|=q\le k-1$, we denote the vertices on these two paths as follows:
\begin{equation}\label{eq:s0s1s2}
\begin{aligned}
    V(\cP_T(s_0, s'_0))&=\{s_0, w_1, \dots, w_q, w_{q+1}, \dots, w_{d(s_0, s'_0)-1}, s'_0\},\\ V(\cP_T(s_0, s_1))&=\{s_0, w_1, \dots, w_q, u_{q+1}, \dots, u_{d(s_0, s_1)-1}, s_1\},
\end{aligned}
\end{equation}
with $q\le d_T(s_0, s'_0)-q\le d_T(s_0, s_1)-q$, and each sensor $s^\star\in S\setminus \{s_0, s'_0, s_1\}$ that directly measures $w_q$ has $d_T(s^\star, w_q) \ge d_T(s_0, s_1)-q$.
\end{cond}
The last statement is indeed true since the assumptions that $\cP_T(s_0, s'_0)$ is the shortest strong sensor path among the ones sharing an edge with another strong one, and that $\cP_T(s_0, s_1)$ is at most as long as $\cP_T(s'_0, s_1)$ imply that $d_T(s_0, w_q) \le d_T(s'_0, w_q)\le d_T(s_1, w_q)$. I.e., among the sensors $s\in S$ for which there is no other sensor on the path $\cP_T(s,w_q)$, the closest one to $w_q$ is $s_0$, the second closest one is $s'_0$, and the third closest one is $s_1$ (ties are allowed). Moreover, all three of $s_0,s'_0,s_1$ (directly) measure $w_q$, since $\cP_T(s_0,s'_0)$ and $\cP_T(s_0,s_1)$ are strong sensor paths. 

Note that it is possible that $w_{q+1}=s'_0$ or $u_{q+1}=s_1$, but only if $q=1$.
\begin{lemma}\label{lem:trC}
Let $T=(V,E)$ be a tree with a threshold-$k$ resolving set $S$ where $|S|=m$. Assume that for all $s\in S$, $A_{T}(s)$ is contained in a single leaf-path starting from $s$. Then, if $T$ contains a pair of strong sensor paths $\cP_T(s_0, s'_0)$ and $\cP_T(s_0, s_1)$ that share an edge, then $T\notin \cT_m^\star$.
\end{lemma}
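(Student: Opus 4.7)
The plan is to define a Transformation C that, given a tree $T$ satisfying the hypotheses of the lemma, produces a new tree $T' = \tr_C(T, S, s_0, s'_0, s_1)$ on a vertex set strictly larger than $V$ such that $S$ remains a threshold-$k$ resolving set for $T'$. Once this is established, $\Tmd_k(T') \le |S| = m$ together with $|T'| > |T|$ contradicts $T \in \cT_m^\star$, proving the lemma.

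The guiding heuristic behind $\tr_C$ is that the overlap of the two strong sensor paths is structurally wasteful. By Condition \ref{cond:shortest} all three sensors $s_0, s'_0, s_1$ directly measure $w_q$ in $T$ with $d_T(s_0, w_q) = q \le d_T(s'_0, w_q) \le d_T(s_1, w_q) \le k$, and the shared trunk $\cP_T(s_0, w_q)$ contributes only $q$ vertices to $V(T)$ even though it is ``used'' by two sensor paths. Transformation C would break this overlap by removing the edge $\{w_q, u_{q+1}\}$ (and possibly some neighbouring edges) so that the component $\wT_1$ containing $s_1$ becomes detached, then reconnecting $\wT_1$ to the rest of the tree via an alternative route chosen so that the resulting sensor paths involving $s_1$ are disjoint from $\cP_{T'}(s_0, s'_0)$ except at a single vertex. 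The freed capacity then allows insertion of a new vertex $v_{\text{new}}$ at a previously empty (type, height) coordinate of an attraction. Mirroring Definitions \ref{defn:trA} and \ref{defn:trB}, the edge manipulation decomposes into a deleted set $E_1$, an added set $E_2$ realising the new sensor-path structure (together with $v_{\text{new}}$), and a set $E_3$ of reconnecting edges analogous to those of Transformations A and B.

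To verify that $S$ still resolves $T'$, one would carry out a case analysis on the location of each pair of vertices being distinguished, following the pattern of the proofs of Lemmas \ref{lem:trAprop} and \ref{lem:trBprop}. A \emph{no-communication} claim analogous to Claim \ref{claim:nocomm-B} would restrict which sensors can reach which components of the modified tree, and Claim \ref{claim:type-difference} would handle subcases where two vertices differ in type with respect to $\{s_0, s'_0\}$ or $\{s_0, s_1\}$. I expect the main obstacle to be the placement of $v_{\text{new}}$: it must be positioned so that its distance-profile to $S$ is distinct from that of every existing vertex of $T'$, and so that the rewiring does not collapse the distinction between some previously resolved pair. The feasibility of such a placement would be argued by combining the single-sensor attraction bound of Remark \ref{rem:one-attr}, the structural description of pairwise attractions in Remark \ref{rem:two-attr}, and the minimality embedded in Condition \ref{cond:shortest} to locate an unoccupied (type, height) slot in one of the relevant attractions; the resulting non-uniqueness of the insertion point is consistent with the non-uniqueness of the optimal tree remarked upon by the authors.
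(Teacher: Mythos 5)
Your overall architecture is the same as the paper's: define a Transformation C that cuts the edge $\{w_q,u_{q+1}\}$, reconnects the detached components, inserts one new vertex, and verifies via a no-communication claim and a location-based case analysis that $S$ still resolves the larger tree, contradicting $T\in\cT_m^\star$. However, the two steps you flag as "to be worked out" are precisely the nontrivial content of the proof, and your proposed way of handling the first of them would not obviously succeed. You plan to place $v_{\text{new}}$ by \emph{searching} for an unoccupied (type, height) slot in one of the existing attractions; but occupancy of a slot in $A_T(s,s')$ says nothing about whether a vertex with that $(s,s')$-profile would clash with some vertex measured by a third sensor, so no such free slot need exist before the rewiring. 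The paper instead \emph{manufactures} the free slot: it replaces the segment $\cP_T(s_0,u_{q+1})$, of length $q+1\ge 2$, by the length-two path $(s_0,v^\star,u_{q+1})$, so that the new vertex sits at distance $1$ from $s_0$ on the now edge-disjoint path $\cP_{T'}(s_0,s_1)$ (whose length does not increase, since $q\ge1$). Because $v^\star$ has no neighbours off this path, it is the \emph{unique} type-$1$ vertex with respect to $s_0,s_1$ in $T'$, and both $s_0$ and $s_1$ measure it, so its resolvability is automatic rather than requiring a feasibility argument. The reconnection and the insertion are one and the same operation, not two.

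The second gap is that "possibly some neighbouring edges" hides a set that must be identified explicitly: the vertices
\[
A_T^\star(w_q)=\{x\in V\setminus S:\ \forall s^\star\in S,\ d(s^\star,x)\ge k+1 \text{ or } w_q\in V(\cP_T(s^\star,x))\},
\]
i.e.\ the vertices that every sensor reaches only through $w_q$ (the attraction of $w_q$ viewed as a pseudo-sensor of radius $k-(d_T(s_0,s'_0)-q)$). After the cut at $w_q$ these vertices may lie in components no longer reachable within radius $k$ by the sensors that used to distinguish them, and reattaching those components by single edges to $s_0$ (as in your Transformation-A-style $E_3$) would put several of them at equal distances from every sensor. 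The paper must therefore detach $A_T^\star(w_q)$ entirely and rewire it into a single leaf-path emanating from $w_q$, exactly as Transformation A does for $A_T(s)$; the bound $|A_T^\star(w_q)|\le k-(d_T(s_0,s'_0)-q)$ guarantees the whole leaf-path is still directly measured by both $s_0$ and $s'_0$. Without isolating this set, several of the cases in your planned analysis (pairs with one vertex hanging off $w_q$) cannot be closed. So: right strategy, but the proposal defers rather than solves the two points where the argument actually lives.
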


\begin{corollary}\label{cor:trC}
Let $T=(V,E)$ be a tree  with a threshold-$k$ resolving set $S$ where $|S|=m$. Assume that for all $s\in S$, $A_{T}(s)$ is contained in a single leaf-path starting from $s$. If  $(T,S)$ either has a weak sensor path or a pair of strong sensors paths that share an edge,  then $T\notin \cT_m^\star$.
\end{corollary}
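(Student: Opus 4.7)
The plan is a short case split that chains Lemma \ref{lem:trB} into Lemma \ref{lem:trC}. I would distinguish two cases according to whether $(T,S)$ already admits a pair of strong sensor paths that share an edge.

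If it does, then Lemma \ref{lem:trC} applies directly: its hypotheses reduce to $|S| = m$, each $A_T(s)$ being contained in a leaf-path emanating from $s$, and $T$ containing such an overlapping pair of strong sensor paths. All three hold by the assumptions of the corollary, so we immediately conclude $T \notin \cT_m^\star$.

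If it does not, then the corollary's hypothesis forces $(T,S)$ to contain at least one weak sensor path. In this situation Conditions \ref{cond:trB}(i)-(iii) all hold: (i) is the corollary's assumption on the attractions; (ii) follows because by Remark \ref{rem:fourpaths} the absence of two strong sensor paths sharing an edge is equivalent to the required near-disjointness; and (iii) is the existence of the weak sensor path. Apply Lemma \ref{lem:trB} to produce a tree $\widehat T = (V, \widehat E)$ on the same vertex set as $T$ for which $S$ is still a threshold-$k$ resolving set, each $A_{\widehat T}(s) = A_T(s)$ still forms a leaf-path emanating from $s$, and $\widehat T$ now contains a pair of strong sensor paths sharing an edge. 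The triple $(\widehat T, S, m)$ therefore meets the hypotheses of Lemma \ref{lem:trC}, and we get $\widehat T \notin \cT_m^\star$.

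To transfer $\widehat T \notin \cT_m^\star$ to $T \notin \cT_m^\star$, use that $|T| = |\widehat T|$ together with the content of Lemma \ref{lem:trC}'s construction: the associated rewiring (Transformation C) produces a tree $T''$ with $|T''| > |\widehat T| = |T|$ on which $S$ of size $m$ is still a threshold-$k$ resolving set. Hence $T$ cannot be of maximum size among trees admitting a size-$m$ threshold-$k$ resolving set, so $T \notin \cT_m^\star$. The main obstacle is purely bookkeeping: one must check that the two invariants we rely on, namely that $S$ continues to resolve and that each $A(s)$ remains a leaf-path from $s$, survive the passage through Transformation B; this is exactly what Lemma \ref{lem:trBprop}(i)-(ii) guarantees, after which Lemma \ref{lem:trC} does the rest, making the corollary a clean combination of the two preceding lemmas.
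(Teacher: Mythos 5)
Your proposal is correct and follows essentially the same route as the paper: split on whether an overlapping pair of strong sensor paths already exists, apply Lemma \ref{lem:trC} directly if so, and otherwise verify Condition \ref{cond:trB}(i)--(iii) (using Remark \ref{rem:fourpaths} for (ii)), pass through Lemma \ref{lem:trB} to obtain $\widehat T$ on the same vertex set, and then apply Lemma \ref{lem:trC} to $\widehat T$. Your explicit transfer step via the strictly larger tree produced by Transformation C is exactly the (implicit) mechanism the paper relies on, so nothing is missing.
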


\begin{proof}
If  $(T,S)$ has a pair of strong sensors paths that share an edge,  then Lemma \ref{lem:trC} is immediately applicable, yielding that $T\notin \cT_m^\star$.
In case $(T,S)$ has a weak sensor path then notice that the application of Lemma \ref{lem:trB} for $(T,S)$ results in another tree $\widehat{T}$ on the same vertex set, and with $S$ still being a threshold-$k$ resolving set on $\widehat T$, for which Lemma \ref{lem:trC} can be applied, and so again  $T\notin \cT_m^\star$.
\end{proof}

The proof of Lemma \ref{lem:trC} relies on a third kind of edge-rewiring procedure on $T$, that we call {\it Transformation C}. Heuristically speaking, this is what we will do: 
We 'separate' the overlapping paths $\cP_T(s_0,s'_0)$ and $\cP_T(s_0,s_1)$ by keeping the former one intact, while in $\cP_T(s_0,s_1)$ we replace the segment $\cP_T(s_0,u_{q+1})$ by a new path $(s_0, v^\star, u_{q+1})$ with a new vertex $v^\star$, while cutting the edge $\{w_q,u_{q+1}\}$. This way we increase the number of vertices in the graph while not increasing the length of either $\cP_T(s_0,s'_0)$ or $\cP_T(s_0,s_1)$. Now the vertices that were measured in $T$ by $s_0, s_0'$ 'through' $w_q$ might not be distinguished from some other vertices anymore, since we cut the edge $\{w_q,u_{q+1}\}$. To solve this problem we make some further changes in the graph. We \emph{pretend} that $w_q$ is a sensor, but with a smaller measuring radius $k-(d_T(s_0, s'_0)-q)$. This is the distance up to which $s_0'$ measured vertices through $w_q$ in $T$. We then `cut out' the attraction of $w_q$ (vertices that are only directly measured by $w_q$, if it were the above-mentioned sensor)  from the tree, and move it to a leaf-path emanating from $w_q$, similarly to transformation A. We obtain then a forest. Then we connect each connected component of this forest to form a new tree by connecting $s_0$ to the originally closest vertex in every other component, again similarly to transformation A. Formally, the transformation is as follows.
\begin{defn}[Transformation C]\label{defn:trC}
Let $T=(V,E)$ be a tree with a threshold-$k$ resolving set $S\subseteq V$ satisfying Condition \ref{cond:trB}(i), and $s_0, s'_0, s_1\in S$, $w_q\in V$ satisfying the setting of Condition \ref{cond:shortest}.
Let, for some $\ell\in \{0, 1, \dots, k-(d_T(s_0, s'_0)-q)\}$,
\begin{equation}\label{eq:setX}
    A_T^\star(w_q):=\big\{x\in V\setminus S: \forall s^\star \in S: d(s^\star,x) \ge k+1 \mbox{ or } w_q\in V(\cP_T(s^\star,x))\big\}=:\{v_1, \dots, v_\ell\},
\end{equation}
where $d_T(w_q, v_i)\le d_T(w_q, v_j)$ when $i\le j$.
Then define the following edge sets.
\begin{align}
    E_1&:= \{\{w_q, u_{q+1}\}\}\cup \big\{ \{x,y\}\in E(T): x\in A_T^\star(w_q) \mbox{ or }  y\in A_T^\star(w_q)  \big\}\label{eq:trC-e1},\\
    E_2&:=\{\{w_q, v_1\}\}\bigcup (\cup_{i=1}^{\ell-1} \{\{v_i, v_{i+1}\}\})\label{eq:trC-e2}.
\end{align}
Let $\wT_0, \wT_1, \dots, \wT_r$ be the connected components of $\wT=(V\setminus A^\star_T(w_q),E\setminus E_1)$, with $\wT_0$ containing $s_0$ (and the whole path $\cP_T(s_0,s'_0)$), and $\wT_1$ containing $s_1$. Let $x_i$ be the unique closest vertex of $\wT_i$ to $w_q$ in $T$, for $i\in\{2,3,\ldots,r\}$, and let $v^\star$ be a new vertex, which is not in $V$. Then we also define the edge set
\begin{equation}
E_3:=\Big(\{\{s_0,v^\star \}\}\cup \{\{v^\star, u_{q+1}\}\}\Big)\bigcup\Big(\cup_{i=1}^r \{\{s_0, x_i\}\}\Big). \label{eq:trC-e3}
\end{equation}
Then define $\tr_C(T, S, s_0, s'_0, s_1):=(V', E')$ where $V':=V\cup\{v^\star\}$, and $E':=(E\setminus E_1)\cup E_2 \cup E_3 $.
\end{defn}

For an example of Transformation C see Figure \ref{fig:trC}.

\begin{figure}[ht]
    \centering
    \begin{subfigure}[b]{0.49\textwidth}
    \centering
    \includegraphics[width=\textwidth]{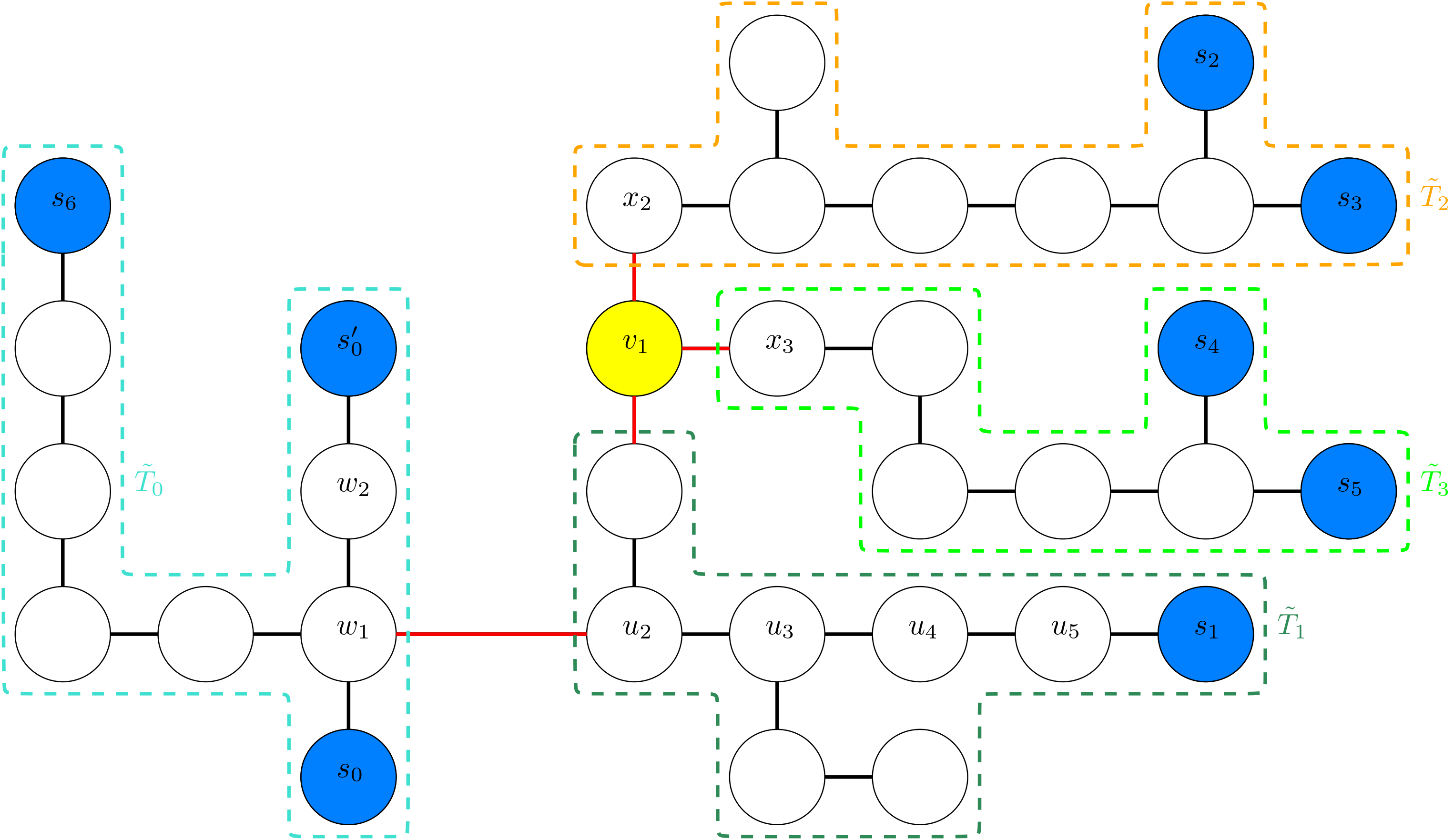}
    \end{subfigure}
    \hfill
    \begin{subfigure}[b]{0.49\textwidth}
    \centering
    \includegraphics[width=\textwidth]{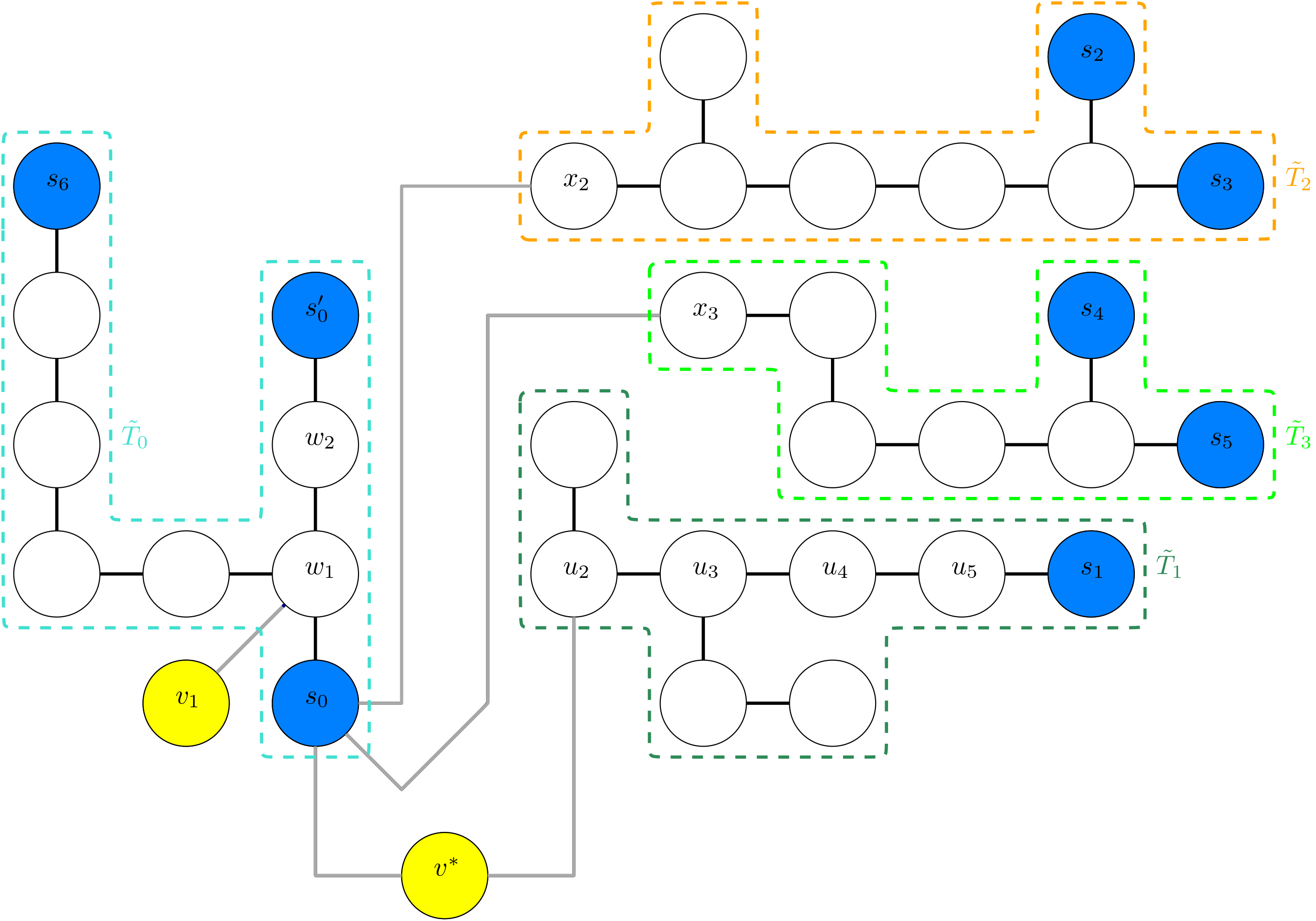}
    \end{subfigure}
    \caption{An example of transformation C with $T$ on the left and $T'=\tr_C(T, S, s_0, s'_0, s_1)$ on the right. Here $k=5$. The blue vertices are the sensors in $S$, and the yellow vertices are in $A^\star_T(w_q)\cup\{v^\star\}$.  The red edges belong to $E_1$, and are deleted by the transformation. The grey edges belong to $E_2\cup E_3$, and are added by the transformation. The subtrees $\wT_0,\wT_1,\wT_2$ and $\wT_3$ are also highlighted. Observe that vertex $x_2$ and the leaf vertex at distance two from $x_2$ in $\widetilde T_2$ are only resolved by $s_0$ both in $T$ and in $T'$. In $T$, the sensors in $\widetilde T_0$ may measure vertices in any subtree, while $s_1$ may only measure vertices in $\widetilde T_1$ and possibly in $\widetilde T_0$. Sensors in $\wT_2$ and $\wT_3$ can only measure vertices within their own subtrees.}
    \label{fig:trC}
\end{figure}

We make a couple of comments on this definition. Since $\cP_T(s_0,s'_0)$ and $\cP_T(s_0,s_1)$ are both strong sensor paths, all of their vertices are measured directly by both endpoints of the path. Hence, $V(\cP_T(s_0,s'_0))\cup V(\cP_T(s_0,s_1))\subseteq V\setminus A_T^\star(w_q)$, and in fact, $V(\cP_T(s_0,s'_0))\subseteq \wT_0$, and $V(\cP_T(u_{q+1},s_1))\subseteq\wT_1$. (Note that $\wT_1$ is the only component $\wT_i$ that was not separated from $\wT_0$ by a vertex in $A^\star_T(w_q)$, but by the additional cut that we made at the edge $\{w_q,u_{q+1}\}$.) Next, $|A^\star_T(w_q)|\le k-(d_T(s_0,s'_0)-q)$, as we will now show. By the definition of $A^\star_T$, for every sensor $s$ that measures a vertex $x\in A^\star_T$, $w_q\in V(\cP_T(s,x))$. Hence, if $x,y\in A^\star_T(w_q)$ had the same distance from $w_q$, then they would have the same distance from every sensor that measures at least one of them, contradicting the fact that $S$ is a threshold resolving set in $T$. On the other hand, the largest distance a vertex in $A^\star_T$ can be from $w_q$ is $k-(d_T(s_0,s'_0)-q)$, otherwise $s'_0$ would not measure it, meaning that only $s_0$ could measure it directly (by the remarks after Condition \ref{cond:shortest}), contradicting Condition \ref{cond:trB}(i).
\subsection{Properties of Transformation C and their consequences}\label{sec:C-properties}
\begin{lemma}\label{lem:trCprop}
Let $T=(V,E)$ be a tree with a threshold-$k$ resolving set $S\subseteq V$ such that Condition \ref{cond:trB}(i) and Condition \ref{cond:shortest} hold. Then $S$ is still a threshold-$k$ resolving set in $\tr_C(T,S,s_0,s'_0,s_1)$.
\end{lemma}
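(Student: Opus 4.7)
\textbf{Proof plan for Lemma \ref{lem:trCprop}.}
I would check the two conditions for $S$ to be a threshold-$k$ resolving set in $T' := \tr_C(T,S,s_0,s'_0,s_1)$: (a) every vertex in $V' = V\cup\{v^\star\}$ is measured by some sensor, and (b) every distinct pair in $V'$ is resolved by some sensor. The overall strategy parallels the proofs of Lemmas \ref{lem:trAprop}(i) and \ref{lem:trBprop}(i) and exploits that $\tr_C$ is \emph{local} around $w_q$ and $u_{q+1}$: distances inside each $\widetilde T_i$ are preserved, so only inter-component distances and distances to the new vertex $v^\star$ need scrutiny.

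First I would prove a no-communication claim in the spirit of Claims \ref{claim:nocomm} and \ref{claim:nocomm-B}: any sensor $s^\star \in \widetilde T_i$ with $i\ge 2$ measures only vertices inside $\widetilde T_i$, in both $T$ and $T'$. In $T$ this follows from the definition of $A^\star_T(w_q)$: the vertex of $A^\star_T(w_q)$ neighboring $x_i$ on $\cP_T(x_i, w_q)$ cannot be measured by $s^\star$, so $d_T(s^\star,x_i)\ge k$; in $T'$ the only exit from $\widetilde T_i$ is via the new edge $\{s_0,x_i\}$ and the same bound yields $d_{T'}(s^\star,s_0)\ge k+1$. A similar statement holds for $\widetilde T_1$ with $v^\star$ replacing $s_0$. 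For part (a), the vertex $v^\star$ is measured by $s_0$, vertices in $A^\star_T(w_q)$ now form a leaf-path of length $\le k-(d_T(s_0,s'_0)-q)$ from $w_q$ and are thus measured by both $s_0$ and $s'_0$, vertices in $\widetilde T_0$ retain their original distances to $s_0,s'_0$, and vertices of $\widetilde T_i$ ($i\ge 1$) that were only measured from outside through $w_q$ are still measured by $s_0$ through the new edge $\{s_0,x_i\}$ (for $i\ge 2$) or via $s_0\to v^\star\to u_{q+1}$ (for $i=1$), since the replacement is strictly shorter by $q-1\ge 0$.

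For part (b) I would case-split on the positions of $x,y \in V'$. If $x,y$ lie in the same component $\widetilde T_i$ or both in $A^\star_T(w_q)\cup\{w_q\}$, the distinguishing sensor from $T$ either stays inside the component (distances preserved) or is $s_0$ or $s'_0$, whose distances to the component vertices shift by a common constant, preserving resolution; for the leaf-path from $w_q$, the strictly increasing distance ordering ensures $s_0$ or $s'_0$ directly distinguishes. If $x,y$ are in different components, the no-communication claim typically yields a sensor measuring only one of them, and for pairs where the relevant distinguishing sensor is $s_0$ or $s'_0$ a type/height argument analogous to Claim \ref{claim:type-difference} applies with respect to $s_0,s'_0$ in $T'$. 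The new vertex $v^\star$ has $d_{T'}(s_0,v^\star)=1$, so $s_0$ distinguishes $v^\star$ from any $y$ with $d_{T'}(s_0,y)\ne 1$; the only remaining candidates $y$ are $w_1$, the $x_i$ with $i\ge 2$, and neighbors of $s_0$ in $\widetilde T_0$. For $y=w_1$ the sensor $s'_0$ measures $w_1$ at distance $d_T(s_0,s'_0)-1\le k$ while its distance to $v^\star$ is $d_T(s_0,s'_0)+1$, differing by $2$; for $y=x_i$ with $i\ge 2$ and for neighbors of $s_0$ in $\widetilde T_0$ not in $A_T(s_0)$, either $s_1$ or a sensor $s''$ directly measuring $y$ via a path avoiding $s_0$ distinguishes.

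The main obstacle is distinguishing $v^\star$ from a vertex $z$ at distance $1$ from $s_0$ that lies in the leaf-path $A_T(s_0)$, since both are at distance $1$ from $s_0$ and $z$ is not directly measured by any other sensor in $T$. Here $z\in A_T(s_0)$ forces $d_T(s^\star,s_0)\ge k$ for every sensor $s^\star\ne s_0$, in particular $d_T(s_0,s_1)\ge k$, which combined with the strong sensor path bound $d_T(s_0,s_1)\le k+1$ gives $d_T(s_0,s_1)\in\{k,k+1\}$. A direct computation then yields $d_{T'}(s_1,v^\star)=d_T(s_0,s_1)-q\le k$ while $d_{T'}(s_1,z)=d_T(s_0,s_1)-q+2\ge k+1$, so $s_1$ measures $v^\star$ but not $z$, distinguishing the pair. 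This closing computation, together with Condition \ref{cond:shortest} ensuring $q\ge 1$, is what makes the transformation preserve resolvability despite the addition of $v^\star$.
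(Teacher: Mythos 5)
Your overall architecture (a no-communication claim followed by a case split on the locations of $x,y$ among the $\widetilde T_i$, $A^\star_T(w_q)$ and $v^\star$) matches the paper's proof, and several of your cases are handled correctly. However, there are two genuine problems.

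First, the step you identify as the ``main obstacle'' rests on a false claim: membership $z\in A_T(s_0)$ does \emph{not} force $d_T(s^\star,s_0)\ge k$ for all other sensors $s^\star$. Definition \ref{defn:unique-measuring}(ii) only requires that no sensor other than $s_0$ \emph{directly} measures $z$, i.e.\ measures it via a path avoiding $s_0$; if $z$ sits on the leaf-path emanating from $s_0$, every path from $s^\star$ to $z$ passes through $s_0$, so $z\in A_T(s_0)$ imposes no lower bound whatsoever on $d_T(s^\star,s_0)$ (in the optimal construction the sensors are at distance roughly $(2k+1)/3\ll k$ and each still owns such a $z$). Hence your deduction $d_T(s_0,s_1)\ge k$ and the inequality $d_{T'}(s_1,z)\ge k+1$ are unjustified. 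The conclusion is salvageable without it: $d_{T'}(s_1,v^\star)=d_T(s_1,w_q)\le k$ always, and $d_{T'}(s_1,z)=d_{T'}(s_1,v^\star)+2$, so $d_k(s_1,z)\neq d_k(s_1,v^\star)$ in every case. The paper's own argument for all of Case 7 is simpler still: $v^\star$ is the unique type-$1$ vertex with respect to $s_0,s_1$ in $T'$ (it has no neighbours off the path $\cP_{T'}(s_0,s_1)$), so no $y$ measured by both $s_0$ and $s_1$ can tie with it, and Claim \ref{claim:type-difference} finishes.

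Second, and more seriously, your treatment of pairs in ``different components'' does not cover the hardest case, namely $x\in\widetilde T_0$ and $y\in A^\star_T(w_q)$ (the paper's Case 6a). There, both $x$ and $y$ may be measured by both $s_0$ and $s'_0$ in $T'$ with $\typ_{s_0,s'_0}(x)=\typ_{s_0,s'_0}(y)=q$ and equal heights, so neither the no-communication claim nor any type/height argument with respect to $s_0,s'_0$ distinguishes them (Figure \ref{fig:trC} exhibits exactly this configuration). The paper resolves it by invoking the definition of $A^\star_T(w_q)$ to produce a sensor $s(x)$ measuring $x$ via a path avoiding $w_q$, showing $s(x)\in\widetilde T_0$ with $\typ_{s_0,s'_0}(s(x))=q$, and then comparing $d_{T'}(s(x),x)$ with $d_{T'}(s(x),y)$ through the branch vertex $z$ of $\cP_T(s(x),x)$ closest to $w_q$. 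Some other cases you compress (e.g.\ $x,y\in\widetilde T_i$, $i\ge 2$, resolved in $T$ only by a sensor in $\widetilde T_0\cup\widetilde T_1$ other than $s_0,s'_0$; and $x\in\widetilde T_1$, $y\in\widetilde T_0$) also need the explicit distance bookkeeping through $w_q$ and $u_{q+1}$ that the paper carries out, but the missing Case 6a argument is the essential gap.
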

\begin{proof}[Proof of Lemma \ref{lem:trC} subject to Lemma \ref{lem:trCprop}]
Suppose that $T$ satisfies Condition \ref{cond:trB}(i) and contains a  pair of strong sensor paths that share an edge. Then by choosing the shortest among the sensor paths that share an edge with another sensor path, and then choosing the shortest among those that overlap with the first path  we can identify $s_0, s_0', s_1$ and assume that Condition \ref{cond:shortest} holds. From now on we will use the notation therein. In this case, Lemma \ref{lem:trCprop} implies that $S$ is a threshold-$k$ resolving set in $T'=\tr_C(T,S,s_0,s'_0,s_1)$, whereas $T'$ has one more vertex than $T$, proving that $T\notin\mathcal{T}_m^\star$.
\end{proof}
\subsection{Preliminaries to treating Transformation C}\label{sec:C-prelim}
In order to prove Lemma \ref{lem:trCprop} we will make use of the following Claim.
\begin{claim}[No 'communication' between different subtrees]\label{claim:nocomm-C}
Consider the setting and notation of Definition \ref{defn:trC} and let $T'=\tr_C(T,S,s_0,s'_0,s_1)$.
\begin{itemize}
\item[(i)]
Let $s^\star$ be any sensor in $\wT_i$ for some $i\in\{2,3,\ldots,r\}$. Then, for all vertices $y\notin\wT_i$, $d_T(s^\star,y)\ge k+1$ and $d_{T'}(s^\star,y)\ge k+1$ both hold.
\item[(ii)]
Let $s^\star$ be any sensor in $\wT_0\cup\wT_1$. Then, for any $y\in V\setminus(\wT_0\cup\wT_1)$, either $d_T(s^\star,y)\ge k+1$, or the path $\cP_T(s^\star,y)$ contains $w_q$.
\item[(iii)]
Let $s^\star$ be any sensor in $\wT_0$. Then, for any $y\in\wT_1$, it holds that $\{w_q,u_{q+1}\}\in V(\cP_T(s^\star,y))$. The same is true if $s^\star \in \wT_1$ and $y \in \wT_0$.
\end{itemize}
\end{claim}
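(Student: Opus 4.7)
I would tackle the three parts in the order (iii), (i), (ii), since each later part reuses structural facts from the earlier one.

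For part (iii), my plan is purely tree-topological. The definition of $\tr_C$ removes the edge $\{w_q,u_{q+1}\}$ as part of $E_1$, and the construction explicitly places $w_q\in\wT_0$ (since $\wT_0$ contains $\cP_T(s_0,s'_0)$) and $u_{q+1}\in\wT_1$ (since $\wT_1$ contains $\cP_T(u_{q+1},s_1)$). Because $T$ is a tree, deleting the single edge $\{w_q,u_{q+1}\}$ splits $T$ into exactly two components, with $w_q$ and $u_{q+1}$ on opposite sides, and $\wT_0$, $\wT_1$ sit entirely on these respective sides. Hence for any $s^\star\in\wT_0$ and $y\in\wT_1$, the unique path $\cP_T(s^\star,y)$ must cross the removed edge, so both $w_q$ and $u_{q+1}$ appear on it (and consecutively).

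For part (i), fix $s^\star\in\wT_i$ with $i\ge 2$ and let $x_i$ be the closest vertex of $\wT_i$ to $w_q$ in $T$, with $v$ the neighbor of $x_i$ on $\cP_T(x_i,w_q)$. The key structural claim is $v\in A^\star_T(w_q)$: indeed, since $x_i$ and $v$ are adjacent in $T$ but lie in different components of $\wT$, the edge $\{x_i,v\}$ lies in $E_1$; it is not the edge $\{w_q,u_{q+1}\}$ (else $x_i$ would be in $\wT_0$ or $\wT_1$, contradicting $i\ge 2$ — one checks $x_i\ne w_q$ and $x_i\ne u_{q+1}$ by the same component argument), and $x_i\notin A^\star_T(w_q)$, so $v\in A^\star_T(w_q)$. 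Now the path $\cP_T(s^\star,v)$ stays inside $\wT_i\cup\{v\}$ and hence avoids $w_q$; by the defining property of $A^\star_T(w_q)$ we therefore must have $d_T(s^\star,v)\ge k+1$, which yields $d_T(s^\star,x_i)\ge k$. For any $y\notin\wT_i$, the path $\cP_T(s^\star,y)$ necessarily exits $\wT_i$ through $x_i$, giving $d_T(s^\star,y)\ge d_T(s^\star,x_i)+1\ge k+1$. In $T'$ the subtree $\wT_i$ and the distance $d_{T'}(s^\star,x_i)=d_T(s^\star,x_i)$ are preserved, and $x_i$ is now joined to $s_0$ by the new edge in $E_3$, so any $y\notin\wT_i$ forces the path to pass through $s_0$, giving the same $k+1$ bound.

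For part (ii), I would split on where $y$ lies: if $y\in A^\star_T(w_q)$ the conclusion follows directly from the definition of $A^\star_T(w_q)$ applied with $s^\star$. If instead $y\in\wT_j$ with $j\ge 2$, I recycle the construction of part (i): let $x_j$ be the closest vertex of $\wT_j$ to $w_q$ and $v$ its neighbor on $\cP_T(x_j,w_q)$, so that $v\in A^\star_T(w_q)$. Since $s^\star\in\wT_0\cup\wT_1$ and $y\in\wT_j$, the path $\cP_T(s^\star,y)$ enters $\wT_j$ through $x_j$, so $v\in V(\cP_T(s^\star,y))$. The $A^\star_T(w_q)$ dichotomy applied to $v$ then gives either $d_T(s^\star,v)\ge k+1$, whence $d_T(s^\star,y)\ge k+1$, or $w_q\in V(\cP_T(s^\star,v))\subseteq V(\cP_T(s^\star,y))$, as required.

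The main obstacle I expect is the careful bookkeeping needed to show $v\in A^\star_T(w_q)$ in parts (i) and (ii): one must exclude the two degenerate options $v=w_q$ and $v=u_{q+1}$, each of which would contradict $i\ge 2$ via the component structure of $\wT$. Once this is in place, the rest is essentially a one-line application of the definition of $A^\star_T(w_q)$, plus the observation that in $T'$ the relevant subtrees $\wT_i$ and their internal distances are untouched, so the $T$-bound transfers cleanly through the new edge $\{s_0,x_i\}$.
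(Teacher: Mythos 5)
Your proof is correct and follows essentially the same route as the paper, which disposes of parts (i)--(ii) by declaring them ``completely analogous'' to Claim \ref{claim:nocomm}, with $s$, $A_T(s)$ there replaced by $w_q$, $A^\star_T(w_q)$, and part (iii) by the observation that $\{w_q,u_{q+1}\}$ is the only edge of $T$ joining $\wT_0$ to $\wT_1$; you have simply written the analogy out in full. One small over-claim in part (i): the path $\cP_T(s^\star,y)$ need not exit $\wT_i$ through $x_i$, since $\wT_i$ may have several boundary edges into $A^\star_T(w_q)$; but whichever boundary edge $\{a,b\}$ it does use has $b\in A^\star_T(w_q)$ by exactly your argument for $v$, so $\cP_T(s^\star,b)$ avoids $w_q$, $d_T(s^\star,b)\ge k+1$, and the bound $d_T(s^\star,y)\ge k+1$ follows anyway (your $T'$-bound is untouched, since $\{s_0,x_i\}$ really is the only remaining connection in $T'$). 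The paper's model proof of Claim \ref{claim:nocomm}(i) glosses over the same point, so this is a one-line repair rather than a gap.
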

\begin{proof}
The proofs parts (i)--(ii) are completely analogous to those of Claim \ref{claim:nocomm}(i)--(ii), with $s, A_T(s)$ there replaced by $w_q, A^\star_T(w_q)$ here. Part (iii) is immediate by the fact that the only edge connecting vertices of $\wT_0$ and $\wT_1$ in $T$ is $\{w_q,u_{q+1}\}$.
\end{proof}
\subsection{Proof that Transformation C works}\label{sec:C-proof}
\begin{proof}[Proof of Lemma \ref{lem:trCprop}]
We will prove that for any pair of vertices $x,y\in V'\setminus S$ there is a sensor in $S$ that resolves them in $T'=\tr_C(T,S,s_0,s'_0,s_1)$, similarly to the proofs of Lemma \ref{lem:trAprop}(i) and Lemma \ref{lem:trBprop}(i). We will use the notation of Condition \ref{cond:shortest} and Definition \ref{defn:trC}. We will do a case-distinction analysis with respect to the location of $x$ and $y$ in the components $\wT_i$, $i\ge 0$, and in the vertex sets $A^\star_T(w_q)$ and $\{v^\star\}$. The numbering of the cases is consistent with those in the proofs of Lemma \ref{lem:trAprop}(i) and \ref{lem:trBprop}(i).

{\bf Case 1a:} Assume that $x\in\wT_i$ and $y\in\wT_j$ for some $i\ge 2$, $j\ge 0$, $j\ne i$. Then, since $x\notin A^\star_T(w_q)$, there is a sensor $s'\in S$ that measures $x$ such that $\cP_T(s',x)$ does not contain $w_q$. Then, by Claim \ref{claim:nocomm-C}(i)--(ii), $s'\in\wT_i$. Therefore, the edges of $\cP_T(s',x)$ are unchanged in $T'$, so $s'$ still measures $x$ in $T'$. However, it does not measure $y\in\wT_j$ in $T'$ by Claim \ref{claim:nocomm-C}(i). Hence, $s'$ resolves $x$ and $y$ in $T'$.

{\bf Case 1b:} Assume that $x\in\wT_1$ and $y\in\wT_0$. Since $x\notin A^\star_T(w_q)$, there has to exist a sensor $s(x)\in S$ that measures $x$ in $T$ such that $w_q\notin V(\cP_T(s(x),x))$. By Claim \ref{claim:nocomm-C}(i), $s(x)\notin\cup_{i\ge 2}V(\wT_i)$, and $s(x)$ also cannot be in $\wT_0$, since then $w_q\in V(\cP_T(s(x),x))$ would be the case. Hence, $s(x)\in\wT_1$. By the similar reasoning, there has to exist a sensor $s(y)\in S$ such that $s(y)$ measures $y$, and $w_q\notin V(\cP_T(s(y),y))$, and hence $s(y)\in\wT_0$. It follows that the paths $\cP_T(s(x),x)$ and $\cP_T(s(y),y)$ are unchanged by the transformation, hence, $s(x)$ still measures $x$ in $T'$, and $s(y)$ still measures $y$ in $T'$. This implies that either $s(x)$ or $s(y)$ resolves $x,y$ in $T'$ as follows. For an indirect proof assume that neither $s(x)$ nor $s(y)$ resolves $x,y$ in $T'$. Then, this assumption  implies that $s(x)$ measures both $x$ and $y$ in $T'$ with $d_{T'}(s(x),x)=d_{T'}(s(x),y)$, and the same holds for $s(y)$. It then follows that $\typ_{s(x),s(y)}(x)=\typ_{s(x),s(y)}(y)$ in $T'$. But this cannot be the case, as $x,s(x)\in\wT_1$ and $y,s(y)\in\wT_0$ together imply that
\[
\typ_{s(x),s(y)}(x)<\typ_{s(x),s(y)}(v^\star)<\typ_{s(x),s(y)}(y)
\]
in $T'$, finishing the proof.

{\bf Case 2:} Now assume that $x,y\in\wT_i$ for some $i\ge 2$. Let $s'\in S$ be a sensor that resolves $x$ and $y$ in $T$. Then $s'$ has to measure at least one of $x$ and $y$ in $T$, hence $s'\notin\wT_j$ for $j\ge 2$, $j\ne i$ by Claim \ref{claim:nocomm-C}(i). There are two (sub)cases: either $s'\in\wT_i$, or $s'\in \wT_0\cup \wT_1$. First we consider $s'\in\wT_i$. Then the edges of both $\cP_T(s',x)$ and $\cP_T(s',y)$ are all still present in $T'$, and $s'$ resolves $x$ and $y$ in $T'$.

For the other case, we assume that $s'\in\wT_0\cup\wT_1$. We will prove that $s_0$ also resolves $x,y$ in $T$ in this case, and as a result $s_0$ will also resolve them in $T'$. First, we know that $s'$ measures at least one of $x$ and $y$, say it measures $x$. Then, since $x\in \wT_i$ for some $i\ge 2$, by Claim \ref{claim:nocomm-C}(ii), $\cP_T(s',x)$ contains $w_q$. On the path $\cP_T(w_q,x)$ there has to be at least one vertex in $A^\star_T(w_q)$, let the closest one to $x$ be $u$ ($u$ is unique, otherwise there would be a cycle in $T$). Then, since $\wT_i$ is a connected component in $V\setminus A_T^\star(w_q)$, and $y\in\wT_i$, $u$ is also on the path $\cP_T(w_q,y)$. Hence, $V(\cP_T(s',w_q))\subseteq V(\cP_T(s',u))\subseteq V(\cP_T(s',x))\cap V(\cP_T(s',y))$. This implies that $w_q$ is also contained in the path $\cP_T(s',y)$. On the other hand, by the discussion after Condition \ref{cond:shortest}, we have that $d_T(s_0,w_q)\le d_T(s',w_q)$, hence
\[
d_T(s_0,x)\le d_T(s_0,w_q)+d_T(w_q,x)\le d_T(s',w_q)+d_T(w_q,x)=d_T(s',x)\le k,
\]
and thus $s_0$ measures $x$ in $T$. Then, by Claim \ref{claim:nocomm-C}(ii), the path $\cP_T(s_0,x)$ contains $w_q$. Then, by the same reasoning as above (changing $s'$ to $s_0$), we get that $\cP_T(s_0,y)$ also contains $w_q$. Consequently,
\[
d_T(s_0,x)-d_T(s_0,y)=d_T(w_q,x)-d_T(w_q,y)=d_T(s',x)-d_T(s',y),
\]
proving that $s_0$ indeed also resolves $x,y$ in $T$ if $s'$ resolves them in $T$.

Next, we will prove that $s_0$ then also resolves $x,y$ in $T'$. Recall $x_i=\argmin_{v\in\wT_i}d_T(w_q,v)$ from Definition \ref{defn:trC}.
To obtain $T'$, we cut the edges adjacent to $A_T^\star(w_q)$ and replaced them by $\{s_0,x_i\}\in E_3$ (an edge added when creating $T'$). Since every path $\cP_T(s_0,v)$, $v\in\wT_i$, starts with the segment $\cP_T(s_0,x_i)$ in $T$, which we replaced with the single edge $\{s_0, x_i\}$ to obtain $\cP_{T'}(s_0,v)$, the following holds for all $v\in\wT_i$ (for any $i\ge 2$):
\begin{equation}\label{eq:trCeq1}
d_{T'}(s_0,v)=d_T(s_0,v)-d_T(s_0,x_i)+1\le d_T(s_0,v),
\end{equation}
 Hence,
\begin{equation}\label{eq:trCeq2}
d_{T'}(s_0,x)-d_{T'}(s_0,y)=d_{T}(s_0,x)-d_{T}(s_0,y),
\end{equation}
and these distances in $T'$ are no longer than in $T$.
So, if $s_0$ resolved $x$ any $y$ in $T$, then it still resolves them in $T'$. This finishes the proof of Case 2.

{\bf Case 3a:} Next, suppose that $x,y\in\wT_0$, and let $s^\star$ be a sensor that resolves them in $T$. If $s^\star\in\wT_0$, then the paths $\cP_T(s^\star,x)$ and $\cP_T(s^\star,y)$ remain unchanged by the transformation, and thus $s^\star$ still resolves $x,y$ in $T'$. Now assume that $s^\star\notin\wT_0$. By Claim \ref{claim:nocomm-C}(i), $s^\star\in\wT_1$ has to hold. We will prove that in this case either $s_0$ or $s'_0$ will resolve $x,y$ in $T'$. First, since $s^\star\in\wT_1$ and $x,y\in\wT_0$, it has to hold that $w_q\in V(\cP_T(s^\star,x))\cap V(\cP_T(s^\star,y))$ by Claim \ref{claim:nocomm-C}(iii). Since $s^\star$ measures at least one of $x,y$, say it measures $x$, we have
\begin{align}
k\ge d_T(s^\star,x)&=d_T(s^\star,w_q)+d_T(w_q,x)\nonumber\\
&\ge \max\{d_T(s_0,w_q),d_T(s'_0,w_q)\}+d_T(w_q,x)
\ge\max\{d_T(s_0,x),d_T(s'_0,x)\}\nonumber,\label{eq:trC3a1}
\end{align}
where in the first inequality we used the argument after Condition \ref{cond:shortest}. Hence, both $s_0$ and $s'_0$ measure $x$ in $T'$. Now assume indirectly that neither $s_0$ nor $s'_0$ resolves $x,y$ in $T'$, then
\begin{equation}
  \typ_{s_0,s'_0}(x)=\typ_{s_0,s'_0}(y)=:t, \qquad
\hgt_{s_0,s'_0}(x)=\hgt_{s_0,s'_0}(y)=:h\label{eq:trC3a3}  
\end{equation}
in $T'$. Then the same hold in $T$ as all of $s_0,s'_0,x,y\in\wT_0$, hence the paths between them are all unchanged by the transformation. Then \eqref{eq:trC3a3} implies that
\[
d_T(s^\star,x)=d_T(s^\star,w_q)+|q-t|+h=d_T(s^\star,y),
\]
contradicting the fact that $s^\star$ resolved $x,y$ in $T$. This finishes the proof.

{\bf Case 3b:} Now assume that $x,y\in\wT_1$, and let $s^\star$ be a sensor that resolves them in $T$. If $s^\star\in\wT_1$, then the paths $\cP_T(s^\star,x)$ and $\cP_T(s^\star,y)$ remain unchanged by the transformation, and thus $s^\star$ still resolves $x,y$ in $T'$ and we are done. Now assume that $s^\star\notin\wT_1$. By Claim \ref{claim:nocomm-C}(i), $s^\star\in\wT_0$ has to hold. Then, by part (iii) of the same claim, $u_{q+1}\in V(\cP_T(s^\star,x))\cap V(\cP_T(s^\star,y))$. We will show that this implies that in $T'$ $s_0$ will resolve $x,y$. Recall that in $T'$, the length-$2$ path $(s_0, v^\star, u_{q+1})$ connects $s_0$ to $T_1$. Hence for any $v\in \wT_1$,
\begin{align}
d_{T'}(s_0,v)&=d_{T'}(s_0,u_{q+1})+d_{T'}(u_{q+1},v)=2+d_T(u_{q+1},v),\label{eq:trC3b1}
\end{align}
and since $d_T(s^\star, u_{q+1}) \ge 2$ for all $s^\star\in S$,
\begin{align}
k&\ge d_{T}(s^\star,v)=d_{T}(s^\star,u_{q+1})+d_{T}(u_{q+1},v)\ge 2+d_T(u_{q+1},v),\label{eq:trC3b3}
\end{align}
The combination of \eqref{eq:trC3b1} and \eqref{eq:trC3b3} with $v=x $ and $v=y$, respectively, shows that $s_0$ measures both $x$ and $y$ in $T'$. Since $d_T(s^\star,x)\ne d_T(s^\star,y)$, \eqref{eq:trC3b3} applied twice for $v=x$ and $v=y$  shows that $d_T(u_{q+1},x)\ne d_T(u_{q+1},y)$, showing in turn by \eqref{eq:trC3b1} that $d_{T'}(s_0,x)\ne d_{T'}(s_0,y)$. This finishes the proof that $s_0$ resolves $x,y$ in $T'$.

{\bf Case 4:} Assume that $x,y\in A^\star_T(w_q)$. In $T'$ they will then both lie on a single leaf-path emanating from $w_q$. By the remarks after Definition \ref{defn:trC}, $x$ and $y$ are at different distances from $w_q$ in $T'$, and both are measured by both $s_0$ and $s'_0$, implying that both $s_0$ and $s'_0$ resolve them in $T'$.

{\bf Case 5:} Assume that $x\in\wT_i$ for some $i\ge 2$, and $y\in A^\star_T(w_q)$. The proof in this case is exactly the same as in Case 1a.

{\bf Case 6a:} Assume that $x\in\wT_0$ and $y\in A^\star_T(w_q)$. As noted before, $y$ will be measured by both $s_0$ and $s'_0$ in $T'$. Assume that neither of these two sensors resolve $x,y$ in $T'$. This then implies that $d_{T'}(s_0,x)=d_{T'}(s_0,y)\le k$, and the same holds for $s'_0$ in place of $s_0$. It then follows that
\begin{align}
\typ_{s_0,s'_0}(x)=\typ_{s_0,s'_0}(y)=q, \qquad 
\hgt_{s_0,s'_0}(x)=\hgt_{s_0,s'_0}(y)=:h\label{eq:trC6a2}
\end{align}
in $T'$. (Such a scenario can be seen on the right picture of Figure \ref{fig:trC}, with $q=h=1$, with $y=v_1$ and $x$ being the vertex right above $v_1$, measured by $s_6$.)  Since $x\notin A^\star_T(w_q)$, there has to be a sensor $s(x)\in S\setminus \{s_0, s_0'\}$ such that $s(x)$ measures $x$ in $T$, and $w_q\notin V(\cP_T(s(x),x))$. By \eqref{eq:trC6a2} and since $h\ge 1$,  by Claim \ref{claim:nocomm-C}(i),(iii), this implies that $s(x)\in\wT_0$, moreover, $\typ_{s_0,s'_0}(s(x))=q$ in both $T$ and $T'$ (another type would mean that the path $\cP_T(s(x),x)$ passes through $w_q$, since $\mathrm{typ}_{s_0, s_0'}(x)=q=\mathrm{typ}_{s_0, s_0'}(w_q)$, a contradiction with $x\notin A_T^\star(w_q)$). Let the vertex of the path $\cP_T(s(x),x)$ that is closest to $w_q$ be $z$ (on Figure \ref{fig:trC} $z$ and $x$ coincide, but this is not necessarily the case if $h\ge 2$). Then $z\ne w_q$ by $w_q\notin V(\cP_T(s(x),x))$. Notice that the edges of the paths $\cP_T(s(x),x)$ and $\cP_T(s(x),w_q)$ are unchanged by the transformation, as these paths entirely lie in $\wT_0$. Also note that $z\in V(\cP_T(w_q,x))$ and then $d_T(z,x)+d_T(z,w_q)=h$.  With this, we can write the following:
\begin{align*}
d_{T'}(s(x),y)&=d_{T'}(s(x),z)+d_{T'}(z,w_q)+d_{T'}(w_q,y)\\
&=d_T(s(x),z)+d_T(z,w_q)+h\\
&>d_T(s(x),z)+d_T(z,x)=d_{T'}(s(x),x),
\end{align*}
where in the last line we used that $d_T(z,x)<d_T(w_q,x)=h$. This proves that $s(x)$ resolves $x,y$ in $T'$.

{\bf Case 6b:} Assume that $x\in\wT_1$ and $y\in A^\star_T(w_q)$. We will prove that in this case either $s_0$ or $s'_0$ will resolve $x,y$ in $T'$. Assume indirectly that it is not the case. Then \eqref{eq:trC6a2} of Case 6a applies for the same reason, which is a contradiction, since $\typ_{s_0,s'_0}(x)=0$ in $T'$ (as the only the pair of edges $\{s_0,v^\star\}\cup \{v^\star,u_{q+1}\}$ connects $\wT_0$ with $\wT_1$ in $T'$).

{\bf Case 7:} Finally, assume that $x=v^\star\ne y$. Then we prove that either $s_0$ or $s_1$ will resolve $x,y$ in $T'$. Assume indirectly that this does not hold. Since both $s_0$ and $s_1$ measure $v^\star$ in $T'$ (as $d_{T'}(s_1,v^\star)=d_{T}(s_1,w_q)\le k$), this implies that $d_{T'}(s_0,y)=d_{T'}(s_0,v^\star)\le k$, and the same holds for $s_1$ in place of $s_0$. Consequently, $\typ_{s_0,s_1}(y)=\typ_{s_0,s_1}(v^\star)$ in $T'$. But this is impossible, as  $\typ_{s_0,s_1}(v^\star)=1$, and $v^\star$ is the only such type-1 vertex in $T'$, as it has no other neighbors than the two on the path $\cP_{T'}(s_0,s_1)$. This contradiction finishes the proof.
\end{proof}

\section{The size of the optimal tree}\label{sec:OptTree}
Recall $\mathcal T_m^\star$, the set of trees with maximal number of vertices that can be resolved using a sensor set of size $m$.

\begin{defn}
Let $\mathcal T_m^{\star\star}\subseteq\mathcal T_m^\star$ be the set of trees $T$ such that there is a threshold-$k$ resolving set $S(T)$ on $T$ for which $|S(T)|=m$, and for which Condition \ref{cond:trB}(i) holds. In case $S(T)$ is not unique we fix an arbitrary such choice.
\end{defn}

Notice that $\mathcal T_m^{\star\star}\ne\emptyset$, since the application of Lemma \ref{lem:trA} for any $T\in\mathcal T_m^{\star}$ results in a tree $\widehat{T}\in\mathcal T_m^{\star\star}$. Hence, giving the size of any tree in $\mathcal T_m^{\star}$ is equivalent to giving the size of any tree in $\mathcal T_m^{\star\star}$.

\begin{lemma}[Number of sensor paths]\label{lem:NumberOfSensorpaths}
Let $T=(V,E)\in\cT_m^{\star\star}$ and consider the sensor set $S=S(T)$ on it. Then $T$ has $m-1$ sensor paths.
\end{lemma}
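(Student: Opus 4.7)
The plan is to introduce an auxiliary graph $H$ on vertex set $S$, where two sensors $s,s' \in S$ are joined by an edge in $H$ if and only if $\cP_T(s,s')$ is a sensor path, and then to show that $H$ is a spanning tree on the $m$-element set $S$. Since each edge of $H$ corresponds bijectively to a sensor path in $T$, this immediately gives the desired count of $m-1$ sensor paths.

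For the connectedness of $H$: given any two sensors $s,s' \in S$, consider the unique path $\cP_T(s,s')$ in $T$ and enumerate the sensors $s=\sigma_0,\sigma_1,\ldots,\sigma_r=s'$ that appear along it in order of distance from $s$. By construction, the subpath of $\cP_T(s,s')$ between consecutive $\sigma_i,\sigma_{i+1}$ contains no intermediate sensor, so it is a sensor path, which means $\{\sigma_i,\sigma_{i+1}\}$ is an edge of $H$. Hence $\sigma_0\sigma_1\cdots\sigma_r$ is a walk connecting $s$ to $s'$ in $H$.

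For the acyclicity of $H$: since $T \in \cT_m^{\star\star} \subseteq \cT_m^{\star}$ and Condition \ref{cond:trB}(i) holds by definition of $\cT_m^{\star\star}$, Corollary \ref{cor:trC} applies and guarantees that $T$ has no weak sensor paths, and that no two strong sensor paths in $T$ share an edge. Consequently, all sensor paths in $T$ are pairwise edge-disjoint. Assume now, for contradiction, that $H$ contains a cycle $s_0,s_1,\ldots,s_r,s_0$ with $r\ge 2$. Concatenating the sensor paths $\cP_T(s_0,s_1),\cP_T(s_1,s_2),\ldots,\cP_T(s_r,s_0)$ yields a closed walk in $T$ that starts and ends at $s_0$ and, by the edge-disjointness just established, uses each edge of $T$ at most once. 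However, in any tree every closed walk must traverse every edge it uses an even number of times (because removing any single edge disconnects the tree, so any excursion into a component must be reversed through the same edge). A non-trivial closed walk using each edge at most once is therefore impossible, contradicting $r\ge 2$.

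Combining the two properties, $H$ is a tree on the $m$ vertices of $S$, and hence has precisely $m-1$ edges, each corresponding to a distinct sensor path of $T$. The only subtle point is verifying the edge-disjointness input for the acyclicity argument, but this is delivered directly by Corollary \ref{cor:trC} given the assumptions on $\cT_m^{\star\star}$; the rest is elementary tree combinatorics.
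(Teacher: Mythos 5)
Your proposal is correct and follows essentially the same route as the paper: both construct the auxiliary graph $H$ on the sensor set with edges given by sensor paths, verify connectedness from the tree structure of $T$, and derive acyclicity from the edge-disjointness of sensor paths supplied by Corollary \ref{cor:trC}, concluding that $H$ is a tree with $m-1$ edges. Your connectedness and parity arguments are somewhat more explicit than the paper's, but the underlying idea is identical.
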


\begin{proof} We `renormalise' the tree $T\in \mathcal T_m^{\star\star}$: we contract every sensor path to be a single edge and delete all vertices that are not sensors. This gives us $H  = (V_2, E_2)$ with $V_2 = S$ and $ \{s_1, s_2\} \in E_2$ if $s_1, s_2 \in S$ and there is a sensor path between $s_1$ and $s_2$ in $T$.

We now prove that $H$ is a tree.
$H$ is connected: if there were any $s_1, s_2$ in $H$ with no path between them, then there would also not be a path between $s_1, s_2$ in $T$, which contradicts with $T$ being a tree. Then, assume $H$ has a cycle $(s_1, s_2, \dots, s_n, s_1)$. Then the union of the sensor paths between these consecutive $s_i$'s would form a cycle in $T$, as these sensor paths in $T$ must be disjoint by Lemma \ref{lem:trC} and Corollary \ref{cor:trC}. But $T$ having a cycle contradicts with $T$ being a tree, so $H$ cannot have a cycle. Hence, $H$ is a tree as it is a connected graph without cycles.
As $H$ is a tree on $m$ vertices, it has $m-1$ edges, meaning that there are $m-1$ sensor paths in $T$. 
\end{proof}
Lemma \ref{lem:NumberOfSensorpaths} tells us that $T$ has $m-1$ sensor paths, arranged in a tree-structure $H$. Now we optimize the number of vertices that can be identified by each of these sensor paths. 

\begin{lemma}[Number of vertices on each sensor path]\label{lem:NumberOfVertices}
Consider a tree $T\in \mathcal T_m^{\star\star}$ with the threshold-$k$ resolving set $S=S(T)$ for some $m\ge 1$. The maximal number of vertices in $A_T(s_0, s_1)$ of two neighboring sensors $s_0, s_1$  is $(k^2+k+1)/3$ if $k\equiv 1\pmod 3$ and $(k^2+k)/3$ otherwise.
\end{lemma}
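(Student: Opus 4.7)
My plan is to bound $|A_T(s_0,s_1)|$ in terms of the length $d:=d_T(s_0,s_1)$ of the sensor path between $s_0$ and $s_1$, and then maximise the resulting bound over $d$. Since $T\in\cT_m^{\star\star}$, Corollary~\ref{cor:trC} guarantees that $\cP_T(s_0,s_1)$ is a strong sensor path, so $d\le k+1$; the case $d=1$ is trivial as it forces $A_T(s_0,s_1)=\emptyset$. By Remark~\ref{rem:two-attr}, every $v\in A_T(s_0,s_1)$ either lies strictly between $s_0$ and $s_1$ on $\cP_T(s_0,s_1)$ or on a side-branch attached to an internal vertex of this path, hence it has a well-defined type $j:=\typ_{s_0,s_1}(v)\in\{1,\ldots,d-1\}$ and height $h:=\hgt_{s_0,s_1}(v)\ge 0$ in the sense of Definition~\ref{defn:type-height}, with $d_T(s_0,v)=j+h$ and $d_T(s_1,v)=d-j+h$.

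The next step is to show that at most one vertex of each admissible (type,height) pair can lie in $A_T(s_0,s_1)$. First, both sensors must directly measure $v$, forcing $j+h\le k$ and $d-j+h\le k$, i.e.\ $h\le k-\max(j,d-j)$. Second, two vertices sharing the same type and height would have identical distances from both $s_0$ and $s_1$, so neither sensor would resolve them; since both such vertices satisfy condition (ii) of Definition~\ref{defn:unique-measuring}, condition (iii) would be violated. Summing over the admissible pairs yields
\begin{equation*}
|A_T(s_0,s_1)|\le f(d):=\sum_{j=1}^{d-1}\bigl(k+1-\max(j,d-j)\bigr).
\end{equation*}

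To finish, I will maximise $f$ over $d\in\{2,\ldots,k+1\}$. Splitting by parity and evaluating the inner sum directly gives $f(2r)=(2r-1)(k+1)-(3r^2-2r)$ and $f(2r+1)=2r(k+1)-r(3r+1)$, both concave quadratics in $r$ whose unconstrained real optima place $d$ near $(2k+4)/3$. Writing $k=3t+s$ with $s\in\{0,1,2\}$ and evaluating $f$ at the (at most two) integer values of $d$ closest to $(2k+4)/3$ yields $f(2t+1)=f(2t+2)=t(3t+1)=k(k+1)/3$ when $s=0$, $f(2t+2)=3t^2+3t+1=(k^2+k+1)/3$ when $s=1$, and $f(2t+2)=f(2t+3)=3t^2+5t+2=(k^2+k)/3$ when $s=2$, matching the formula in the lemma; by concavity of $f$ in $r$ no other value of $d$ improves on this.

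The main obstacle is this final case analysis over $k\bmod 3$: the optimal $d$ is not always unique (when $s\in\{0,2\}$ there are two equally good choices), and one must carefully confirm that the neighbouring integer values of $d$ do not beat the stated maximum. Tightness of the bound — that some trees actually realise $f(d^\star)$ vertices on a given sensor path — is not needed for the present lemma but will emerge from the explicit construction in Section~\ref{sec:OptTree}, where for the optimal length $d^\star$ one attaches to each intermediate vertex $w_j$ a leaf-path of length exactly $k-\max(j,d^\star-j)$, producing one vertex at every admissible (type, height) position.
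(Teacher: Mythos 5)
Your proof is correct and follows essentially the same route as the paper: classify the vertices of $A_T(s_0,s_1)$ by (type, height), use Definition~\ref{defn:unique-measuring} to show each admissible pair is occupied at most once and that heights are capped at $k-\max(j,d-j)$, then maximise the resulting concave quadratic over the length of the (necessarily strong) sensor path with a case split on $k\bmod 3$. The only difference is a cosmetic shift of parametrisation (you take $d$ to be the distance between the sensors, the paper takes it to be the number of internal vertices), and your arithmetic matches the paper's values in all three residue classes.
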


\begin{proof}
By Lemma \ref{lem:trC} and Corollary \ref{cor:trC} we know that all sensor paths in $T$ are disjoint and strong, i.e., they have at most $k+1$ edges. Consider the sensor path between two neighboring sensors $s_0, s_1 \in S$. Using Definition \ref{def:direct-measuring}, disjointness of sensor paths implies that the vertices in $V(\cP_T(s_0, s_1))\setminus \{s_0, s_1\}$ are not directly measured by any sensor in $S\setminus \{s_0, s_1\}$. Hence, each vertex on the sensor path $\cP_T(s_0, s_1)$ belongs to $A_T(s_0, s_1)$.  Recall now the types and heights of vertices from Definition \ref{defn:type-height}.  Using Remark \ref{rem:two-attr}, and the observation before Definition \ref{defn:trB}, all vertices in $A_T(s_0, s_1)$ must have  types  between $1$ and  $d_T(s_0, s_1)-1$ with respect to $s_0, s_1$, and these indeed all belong to $A_T(s_0, s_1)$. Further, again by the observation before Definition \ref{defn:trB},   they all  must have different (type, height) vectors.

Denote by $d:=|V(\cP_T(s_0, s_1))|-2$ the number of vertices between the sensors of the sensor path, so the distance between the sensors is $d+1$. 
By the observation before Definition \ref{defn:trB}, the maximal number of different height values that can belong to type-$i$ vertices is $\min\{k-i, k-(d+1-i)\}+1$, since both $s_0$ and $s_1$ have to measure these vertices, the plus one is because of the vertex with height $0$. 

So the maximum number of vertices in $A_T(s_0, s_1)$ is 
\begin{equation}\label{eq:max-attraction}
    \max_{d \le k+1} (|A_T(s_0, s_1)|)=\max_{d\le k+1} \sum_{i=1}^d\Big( 1+ \min\{k-i, k-(d+1-i)\}\Big).
\end{equation} The inner sum (denote it by $\mathrm{Sum}(d)$) can be simplified, namely if $d$ is even:
\begin{equation} \label{eq1}
\mathrm{Sum}_{e}(d)= d+2\sum_{i=d/2+1}^{d} (k-i) 
= dk - \frac{3d^2}{4}+\frac{d}{2},
\end{equation}
and if $d$ is odd, we have:\\
\begin{equation} \label{eq2}
\begin{split}
\mathrm{Sum}_{o}(d)&=  d+(k-\frac{d+1}{2})+2\sum_{i=1}^{(d-1)/2} (k-(d+1-i))
\\ & = dk - \frac{3d^2}{4}+\frac{d}{2}+\frac{1}{4}.
\end{split}
\end{equation}
Observe that $\mathrm{Sum}_{\Box}(d)$ is a concave parabola of $d$ for both $\Box=o, e$. In both cases the continuous maximizer results in $d = (2k+1)/3$. Because the formulas are quadratic, the maximal integer value of the formula is found by rounding $(2k+1)/3$ to the closest integer.

Here, we distinguish three cases depending on the value of $(k\!\mod 3)$. 

\begin{enumerate}
\item If $k\equiv 0\pmod 3$, then the closest integer to $(2k+1)/3$ is $d=2k/3$. This value is always even, so we use $d_\star = 2k/3$ in \eqref{eq1}, which gives $(k^2+k)/3$.
\item If $k\equiv 1\pmod 3$, then the closest integer to $(2k+1)/3$ is $(2k+1)/3$. This value is always odd, so we substitute $d_\star = (2k+1)/3$ into \eqref{eq2}, which gives $(k^2+k+1)/3$.
\item If $k\equiv 2\pmod 3$, then the closest integer to $(2k+1)/3$ is $(2k+2)/3$. This value is always even, so we substitute $d_\star = (2k+2)/3$ into \eqref{eq1}, which gives $(k^2+k)/3$.
\end{enumerate}
Observe that the optimiser $d_\star\le k+1$ holds in all cases and for all $k$, even if we would drop the restriction of $d\le k+1$ in \eqref{eq:max-attraction}. (This means that in principle one could allow weak sensor paths in the optimisation but one would not gain extra vertices on them.)
\end{proof}

We can now prove Proposition \ref{prop:main}.

\begin{proof}[Proof of Proposition \ref{prop:main}]
By the remark above Lemma \ref{lem:NumberOfSensorpaths} it is sufficient to restrict to $T^\star\in\mathcal{T}_m^{\star\star}$ with the corresponding threshold-$k$ resolving set $S=S(T^\star)$.
Any such $T^\star$ has $m$ sensors that have their attraction in a leaf-path of length $k$ attached to the sensors themselves, accounting for $(k+1)m$ vertices. $T$ also has $m-1$ sensor paths, each carrying the maximal possible size of the attraction of two neighboring sensors, which is  $(k^2+k+1)/3$ vertices if $k\equiv 1\pmod 3$ and $(k^2+k)/3$ otherwise by Lemma \ref{lem:NumberOfVertices}. In total, this means $|T^\star| = (k+1)m+(m-1)(k^2+k+1)/3$ if $k\equiv 1\pmod 3$ and $|T^\star| = (k+1)m+(m-1)(k^2+k)/3$ otherwise.
\end{proof}

See Figure \ref{fig:opt} for two examples of the optimal construction described in the proof.

\begin{figure}[ht]
    \centering
    \begin{subfigure}[b]{0.49\textwidth}
    \centering
    \includegraphics[width=\textwidth]{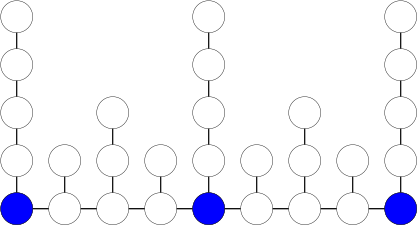}
    \end{subfigure}
    \hfill
    \begin{subfigure}[b]{0.49\textwidth}
    \centering
    \includegraphics[width=\textwidth]{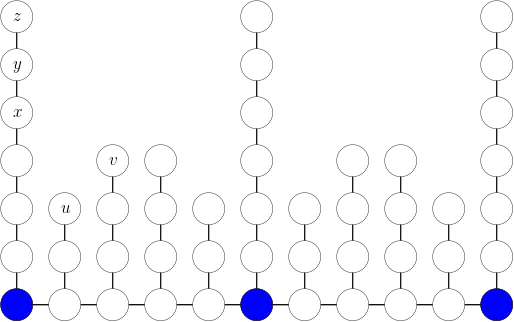}
    \end{subfigure}
    \caption{Two examples for the optimal construction from the proof of Proposition \ref{prop:main}. In both cases, the number of sensors (blue vertices) is $m=3$, while the measuring radius is $k=4$ for the figure on the left, and $k=6$ for the figure on the right. In the case of latter, if we disconnect the vertices $x,y,z$ from their current locations, and instead connect $x$ to $u$, then $y$ to $x$, and $z$ to $v$, then we get another construction with optimal size (that is, a graph in $\cT_3^\star$), but Condition \ref{cond:trB}(i) will be violated by the leftmost sensor, hence this graph will not be in $\cT_3^{\star\star}$.}
    \label{fig:opt}
\end{figure}

\begin{remark}
We only constructed the optimal trees in $\cT_m^{\star\star}$ in our proof. However, there are trees of optimal size that do not satisfy Condition \ref{cond:trB}(i), that is, they belong to $\cT_m^\star\setminus\cT_m^{\star\star}$. An example of how such a tree could be immediately obtained from our constructions is illustrated in Figure \ref{fig:opt}. On the other hand, it follows from our proofs that the repetitive application of Transformation A on any tree in $\cT_m^\star\setminus\cT_m^{\star\star}$ will result in a tree in $\cT_m^{\star\star}$, which follows the construction that we described.
\end{remark}

\section{Improved lower bound based on leaves}\label{sec:general}

Recall the notation  $\cL_v=\{\cP^{(v)}_j\}$ for the collection of leaf-paths starting at a vertex $v$, and their number $L_v=|\cL_v|$, and that the length of a leaf-path $\cP^{(v)}_j$ is denoted by $\ell(\cP^{(v)}_j)=\ell(v,j)$. Furthermore, we will denote the vertices of $\cP^{(v)}_j$ other than $v$ by $x^{(v,j)}_1, x^{(v,j)}_2,\ldots,x^{(v,j)}_{\ell(v,j)}$, in order of increasing distance from $v$. For a generic $\cP\in\cL_v$ we will also denote its vertices other than $v$ in order by $x^{(v)}_1, x^{(v)}_2,\ldots,x^{(v)}_{\ell(\cP)}$.
Recall the support vertices $F_T$ from Definition \ref{def:support} and the upper and lower complexities $\uc(\ell), \lc(\ell)$ from Definition \ref{def:complexity}.

\begin{proof}[Proof of Lemma \ref{lem:LeafPathSensors}]
Let us give some heuristics first: if the length of the leaf-path is at least  $3k+2$, we can place two sensors at distance $k$ and at distance $2k+1$ from the end-vertex (the leaf) of the leaf-path. These two sensors then resolve  the last section containing $3k+2$ vertices. We can then `cut this section off' and iterate the procedure until the length of the remaining leaf-path is strictly shorter than $3k+2$. Then we treat the remaining short leaf-paths together, and one of them will be the special path $P^\star$ that might need one less sensor since it could be measured via a sensor through $v$. To show that this procedure is optimal, we prove by induction. 

The base case is the following: all $P^{(v)}_j\in\cL_v$ have length at most $3k+1$. Let us first assume that this base case holds.

If $P^{(v)}_j\in\cL_v$ has length $\ell(v,j)\in[2k+2,3k+1]$, then the end vertex $x^{(v,j)}_{\ell(v,j)}$ needs to be measured by a sensor $s$ in $\{x^{(v,j)}_{\ell(v,j)-k},x^{(v,j)}_{\ell(v,j)-k+1},\ldots,x^{(v,j)}_{\ell(v,j)}\}$, say $s=x^{(v,j)}_i$. Furthermore, $s$ cannot distinguish between $x^{(v,j)}_{i-1}$ and $x^{(v,j)}_{i+1}$ unless there is another sensor among $x^{(v,j)}_{i-k-1}, x^{(v,j)}_{i-k},\ldots, x^{(v,j)}_{\ell}$ (note that $i-k-1\ge \ell(v,j)-2k-1\ge 1$). Hence, we do need at least two sensors in $V(P_j^{(v)})\setminus\{v\}$, which is exactly (both) $\uc(\ell), \lc(\ell)$ for $\ell\in[2k+2, 3k+1]$.

If $P^{(v)}_j\in\cL_v$ has length $\ell(v,j)\in[k+1,2k+1]$, then the end vertex $x^{(v,j)}_{\ell(v,j)}$ again has to be measured by a sensor in $\{x^{(v,j)}_{\ell(v,j)-k},x^{(v,j)}_{\ell(v,j)-k+1},\ldots,x^{(v,j)}_{\ell(v,j)}\}$ (note that $\ell(v,j)-k\ge 1$), so $V(P^{(v)}_j)\setminus\{v\}$ needs to contain at least one sensor, which gives (both) $\uc(\ell), \lc(\ell)$ for $\ell\in [k+1,2k+1]$.

Now consider all $P^{(v)}_j\in\cL_v$ that have length $\ell(v,j)\in[1,k]$, and call these \textit{short leaf-paths}. In order to distinguish between the vertices $\{x^{(v,j)}_{1}\}_j$ of all short leaf-paths $\{P^{(v)}_j\}_j$, all but at most one of them need to contain a sensor that is not at $v$: this gives $\uc(\ell) $ for all but one short leaf-paths, and gives $\lc(\ell)$ for a single short leaf-path. This finishes the proof for the base case.

Now assume that some $P^{(v)}_j\in\cL_v$ has length $\ell(v,j)\ge 3k+2$. Then, similarly to the first case above, $x^{(v,j)}_{\ell(v,j)}$ can only be measured by a sensor in $x^{(v,j)}_{\ell(v,j)-k},x^{(v,j)}_{\ell(v,j)-k+1},\ldots,x^{(v,j)}_{\ell(v,j)}\}$, say $s=x^{(v,j)}_i$. Furthermore, $s$ cannot distinguish between $x^{(v,j)}_{i-1}$ and $x^{(v,j)}_{i+1}$ unless there is another sensor $s'$ among $x^{(v,j)}_{i-k-1}, x^{(v,j)}_{i-k},\ldots, x^{(v,j)}_{\ell(v,j)}$. Here $i-k-1\ge \ell(v,j)-2k-1$. Then, all the vertices that either $s$ or $s'$ can measure belong to $\{x^{(v,j)}_{\ell(v,j)-3k-1}, x^{(v,j)}_{\ell(v,j)-3k}, \ldots, x^{(v,j)}_{\ell(v,j)}\}$. Hence, the rest of the leaf-paths, that is, $\cup_{k=1}^{L_v}V(P^{(v)}_k)\setminus\{v,x^{(v,j)}_{\ell(v,j)-3k-1}, x^{(v,j)}_{\ell(v,j)-3k}, \ldots, x^{(v,j)}_{\ell(v,j)}\}$ need at least as many sensors as they would need in the graph $T\setminus\{x^{(v,j)}_{\ell(v,j)-3k-1}, x^{(v,j)}_{\ell(v,j)-3k}, \ldots, x^{(v,j)}_{\ell(v,j)}\}$.
Thus, a leaf-path needs an extra $2$ sensors at every multiple of $3k+2$, and this is exactly what both $\uc(\ell)$ and $\lc(\ell)$ express. 
This provides the induction step, and finishes the proof.
(Comment: How many sensors a path needs on its remainder when divided by $3k+2$ is handled by the base case.)
\end{proof}

\begin{proof}[Proof of Theorem \ref{thm:LPLowerBound}]
Assume first that $k\equiv 1\pmod{3}$, and let
\[
B_T:=\left\lceil\frac{3n-3\sum_{v\in F_T}\sum_{j=1}^{L_v}\ell(v,j)+k^2+k+1}{k^2+4k+4}\right\rceil.
\]
For an indirect proof, assume that \eqref{eq:tmdLeafPaths1} does not hold, and in fact there exists a threshold-$k$ resolving set $S^\star$ for $T$ such that
\begin{equation}\label{eq:LPLowerBound1}
|S^\star|\le B_T-1+\sum_{v\in F_T}R(\cL_v)-|F_T|.
\end{equation}
Let
\[V_{LP}:=\bigcup_{v\in F_T}\bigcup_{j=1}^{L_v}V\left(P^{(v)}_j\right)\setminus\{v\},\]
the union of vertices in leaf-paths starting at support vertices, and let
$T':=T\setminus V_{LP}$
be the 'trimmed' version of $T$, when the leaf-paths emanating from the support vertices are removed, but the support vertices still belong to $T'$. Observe that $T'$ is indeed a tree, i.e., connected, since we only removed leaf-paths ending at leafs. By Lemma \ref{lem:LeafPathSensors}, and \eqref{eq:LeafPathSensors},
\[
\left|S^\star\cap V_{LP}\right|\ge \sum_{v\in F_T}R(\cL_v).
\]
Hence, since $T'$ and $V_{LP}$ are on disjoint vertex-sets, by \eqref{eq:LPLowerBound1},
\begin{equation}\label{eq:LPLowerBound2}
|S^\star\cap V(T')|\le B_T-1-|F_T|.
\end{equation}
Consider now the new sensor set $\widetilde S=S^\star\cup F_T$. Since $S^\star$ is a threshold-$k$ resolving set for $T$, so is $\widetilde S$. Moreover, since $F_T\subseteq \widetilde S$, none of the sensors in $\widetilde S\cap V_{LP}$ directly measures any vertex in $T'\setminus \widetilde S$, in the sense of Definition \ref{def:direct-measuring}. This also means that if some sensor $s\in ( V(\mathcal L_v)\setminus \{v\})\cap S^\star$ resolves two vertices $x,y \in T'$, then so does $v\in F_T \cap\widetilde S$.  It then follows that $\widetilde S\cap V(T')$ is a threshold-$k$ resolving set for $T'$. Moreover, since $\widetilde S=S^\star\cup F_T$,  by \eqref{eq:LPLowerBound2},
\[
|\widetilde S\cap V(T')|\le |\widetilde S^\star \cup V(T')| + |F_T| \le  B_T-1.
\]
However, Theorem \ref{thm:main} implies that $\Tmd_k(T')\ge B_T$, as $|V(T')|=n-\sum_{v\in F_T}\sum_{j=1}^{L_v}\ell(v,j)$. This contradiction finishes the proof when $k\equiv 1\pmod{3}$. The proof in the case $k\not\equiv 1\pmod{3}$ is completely analogous.
\end{proof}

\section{Acknowledgements}
We are grateful to Gergely \'Odor for providing us inspiration for this problem, and for pointing out numerous valuable references.

\newpage
\bibliographystyle{abbrv}
\bibliography{main}
\end{document}